\providecommand{\titlerunning}[1]{}
\providecommand{\authorrunning}[1]{}
\providecommand{\institute}[1]{}
\theoremstyle{plain}
\newtheorem{theorem}{Theorem}[section]
\newtheorem{lemma}[theorem]{Lemma}
\newtheorem{proposition}[theorem]{Proposition}
\newtheorem{corollary}[theorem]{Corollary}
\newtheorem{assumption}[theorem]{Assumption}
\theoremstyle{definition}
\newtheorem{definition}[theorem]{Definition}
\newtheorem{example}[theorem]{Example}
\theoremstyle{remark}
\crefname{assumption}{Assumption}{Assumptions}
\Crefname{assumption}{Assumption}{Assumptions}
\begin{document}

\title{Augmented Lagrangian methods for infeasible convex optimization problems and
diverging proximal-point algorithms
}


\author[1]{Roland Andrews}
\author[1]{Justin Carpentier}
\author[1]{Adrien Taylor}
\affil[1]{INRIA, Ecole Normale Sup\'erieure}
\affil[ ]{\texttt{roland.andrews@inria.fr}}



\date{}

\maketitle

\begin{abstract}
  This work investigates the convergence behavior of augmented Lagrangian methods (ALMs) when applied to convex optimization problems that may be infeasible.
   ALMs are a popular class of algorithms for solving constrained optimization problems.
   We demonstrate that, under mild assumptions, the sequences of iterates generated by ALMs converge to solutions of the ``closest feasible problem''.
    We establish progressively stronger convergence results, ranging from basic sequence convergence to more precise convergence rates, under a hierarchy of assumptions.

  This study leverages the classical relationship between ALMs and the proximal-point algorithm applied to the dual problem. 
  A key technical contribution is a set of concise results on the behavior of the proximal-point algorithm when applied to functions that may lack minimizers.
   These results pertain to its convergence in terms of its subgradients and of the values of the convex conjugate.
\end{abstract}

\section{Introduction} \label{sec:Introduction}

Constrained convex optimization problems arise naturally in numerous applications, spanning engineering design, machine learning, and economics, often as subroutines within larger optimization frameworks. While many algorithms assume and exploit the existence of feasible solutions, several applications (e.g., optimal control~\cite{scokaert1999feasibility} or optimization layers in deep learning~\cite{amos2017optnet,agrawal2019differentiable,bambade2024leveraging}) require robust behavior even when the feasible set is empty.
 
This work considers the standard convex optimization problem:
\begin{equation} \label{eq:convex_min_problem}
    \begin{array}{rl}
        \displaystyle \inf_{ x \in \X} & f(x)         \\[1mm]
        \text{s.t.}                    & C(x) \in \KK,
    \end{array}
\end{equation}
where the feasible set described by the constraint \(C(x)\in\KK\) may be empty. Formally, \(\X\) is a nonempty closed convex subset of a real Hilbert space,
 \(\Y\) is a real Hilbert space, and \(\KK\) is a nonempty closed convex subset of \(\Y\).
  We further assume that \(f: \X \to \R \cup \{+\infty\}\) is a closed, proper, convex function on \(\X\), and \(C: \X \to \Y\) is a mapping such that the graph of \( C - \KK\), defined as
\begin{equation} \label{def:CC} 
    \CC \changed{ \triangleq } \{ (x,s) \in \X \times \Y \; | \; s \in C(x) - \KK \},
\end{equation}
is a closed, convex\footnote{\added{This corresponds to the fact that \(C\) is (\(-\KK\))-convex in the sense of \cite[Definition 2.103]{bonnans2000perturbation}.}}, nonempty set.
 \(\KK\) need not be a cone; for instance, if \(C\) is a continuous affine function, \(\KK\) can be any nonempty closed convex set.

This framework encompasses many important classes of convex optimization problems, for example:
\begin{itemize}
    \item Quadratic programming (QP): \(\X = \R^n\), \(f: x \mapsto \tfrac{1}{2}x^\top H x + q^\top x\) (with \(H \succeq 0\)), \(\KK=\{0\}^{m_1} \times \R_-^{m_2}\), and \(C: x \mapsto (A x - a, Bx-b)\), where \(A \in \R^{m_1 \times n}\), \(B \in \R^{m_2 \times n}\), \(a \in \R^{m_1}\), and \(b \in \R^{m_2}\). This represents \(m_1\) equality constraints \(Ax=a\) and \(m_2\) inequality constraints \(Bx \leq b\).
    \item Semidefinite programming (SDP): \(\X = \mathbb{S}^n\) (the space of \(n \times n\) symmetric matrices), \(f: X \mapsto \langle C, X \rangle\), \(\KK = \{b\} \times \mathbb{S}^n_+\) (where \(b\in \R^m\) and \(\mathbb{S}^n_+\) is the cone of positive semidefinite matrices), and \(C: X \mapsto ((\langle A_1, X \rangle, \dots, \langle A_m, X \rangle), X)\). This represents constraints \(\langle A_i, X \rangle = b_i\) and \(X \succeq 0\).
    \item Convex second-order partial differential equations as illustrated in \Cref{example:second order elliptic PDE}. In this setting, \(\X\) and \(\Y\) are infinite-dimensional.
\end{itemize}

In this work, we study the behavior of the augmented Lagrangian method (ALM), a popular choice for addressing such problems. Historical references include~\cite{powell1969method,hestenes1969multiplier,rockafellar1976augmented}; \cite{bertsekas1982constrained} provides a detailed treatment.
 The ALM involves a partial dualization of the constraints (the constraint \(C(x)\in \KK\) is dualized, while \(x \in \X\) is not) and consists of alternating updates of the primal and dual variables. Thus, the ALM transforms a constrained optimization problem into a series of less constrained ones.
  The iterations of ALM are expressed as:
\begin{equation} \label{eq:ALM_pure}
    \begin{array}{rl}
        (x^{k+1} , y^{k+1} )  &\in \argmin\limits_{x \in \X, y \in \KK}  \left\{ f(x)  -\inner{\lambda^k}{C(x) - y} + \frac{\gamma_k}{2} \left\|C(x) - y \right\|^2 \right\} \\
        \lambda^{k+1} &= \lambda^k - \gamma_k  \left( C(x^{k+1}) - y^{k+1} \right),
    \end{array}
\end{equation}
where \(\gamma_k > 0\) is a sequence of positive real numbers known as penalty parameters, and \(\lambda^k \in \Y\) are the dual variables (Lagrange multipliers) for the constraint $C(x)\in \KK$. 
Iteration~\eqref{eq:ALM_pure} can often be rewritten and decomposed into more convenient forms in practical cases (see~\Cref{sec:ALM} for a detailed account of classical equivalent ALM reformulations).

\begin{example}[ALM for inequality-constrained convex optimization]. \label{example:ALM_for_inequality_constrained_convex_optimization}
    A specific instance of~\eqref{eq:convex_min_problem} is the nonlinear inequality-constrained convex optimization problem in finite dimensions:
    \begin{equation} \label{eq:inequality_constrained_problem}
        \begin{array}{rl}
            \displaystyle \min_{x \in \R^n} & f(x)         \\[1mm]
            \text{s.t.} & c_i(x) \leq 0, \quad i = 1, \dots, m,
        \end{array}
    \end{equation}
    which corresponds to~\eqref{eq:convex_min_problem} where \(\X = \R^n\), \(\KK = \R_-^{m} \) (the nonpositive orthant), and \( C: x \mapsto \left( c_1(x), \dots, c_{m}(x)\right)^\top \) is a vector of proper, closed, convex functions \(c_i(\cdot)\).
    The augmented Lagrangian method applied to this problem was first studied in~\cite{rockafellar1973dual}, where it is formulated as:
    \begin{equation} \label{eq:augmented_lagrangian_method_inequality_constraints}
        \begin{array}{rl}
            x^{k+1}  &\in \argmin\limits_{x \in \X}  \left\{ f(x) +  \sum_{i=1}^{m} \frac{1}{2\gamma_k} \left( \max\left(0, \lambda^k_i + \gamma_k c_i(x) \right)^2 - (\lambda^k_i)^2 \right) \right\} \\
            \lambda^{k+1}_i &= \max\left(0, \lambda^k_i + \gamma_k c_i(x^{k+1}) \right), \quad i = 1,\ldots,m.
        \end{array}
    \end{equation} 
    This is a particular case of~\eqref{eq:ALM_pure}, reformulated and simplified for this setting (details in \Cref{sec:ALM}). 
    Applying \Cref{thm:level boundedness locally uniformly implies lsc} in the present work guarantees that the iterates of ALM~\eqref{eq:augmented_lagrangian_method_inequality_constraints} converge to solutions of the closest feasible problem: \( \inf_{x \in \X} \{ f(x) \mid x \in \argmin_{\tilde x \in \mathrm{dom}(f)} \sum_{i=1}^m \max\left(0, c_i(\tilde x) \right)^2 \} \), provided \(f, c_1, \dots, c_m\) have no common recession direction.
\end{example}


\subsection{Related work} \label{sec:related_works}

When a convex optimization problem~\eqref{eq:convex_min_problem} is feasible \added{and satisfies some constraint qualification}, the inexact augmented Lagrangian method (IALM) (\Cref{algo:IALM}) provides a convenient way to approximate its solutions. 
In particular, it asymptotically converges to solutions of problem~\eqref{eq:convex_min_problem} by approximately solving a sequence of less constrained convex problems \cite{rockafellar1976augmented}. If the inner convex problems can be solved efficiently, the IALM is an effective method for approximating a solution to~\eqref{eq:convex_min_problem} under minimal assumptions. However, the algorithm's behavior in the infeasible setting has garnered more interest only recently.

\paragraph{Infeasibility detection.}
A common strategy in solvers for handling potential infeasibility is to design rules and algorithms for its detection. Implementations of specific algorithms often include infeasibility detection routines tailored to particular problem types. For example, QPALM, a proximal augmented Lagrangian method for (nonconvex) quadratic programs \cite{hermans2022qpalm}, incorporates such routines with heuristics.
\cite{armand2019augmented} investigates infeasibility detection for equality-constrained optimization using a primal-dual augmented Lagrangian method where the objective function is scaled by an additional parameter. When the problem is infeasible, this scaling parameter converges to zero, and the algorithm tends to minimize constraint violation. They assume convergence of the sequences, smoothness of the involved functions, and positive definiteness of the Hessian of the constraint norm at the limit point. Under these assumptions and with a specific parameter choice, linear convergence to an infeasible stationary point is achieved.

\paragraph{The augmented Lagrangian method as a penalty method.}
Some works modify the augmented Lagrangian method to simplify its analysis in the infeasible setting. In \cite{birgin2015optimality,gonccalves2015augmented},
 the multipliers are constrained to a bounded set.
  The penalty parameters are driven to infinity, rendering the multipliers asymptotically irrelevant and causing the algorithm to resemble a penalty method.
 This approach, however, shifts much of the computational difficulty to the ALM subproblems, as large penalty parameters typically lead to ill-conditioned subproblems.

\paragraph{Convergence of the augmented Lagrangian method on infeasible problems.}
The specific question of the augmented Lagrangian method's behavior in the case of infeasibility has been studied for quadratic programming (QP) in \cite{chiche2016augmented}. 
They provide rules for dynamically choosing penalty parameters to achieve any desired linear convergence rate. They prove that,
 for QPs, the ALM iterates converge to solutions of the closest feasible problem (minimizing $f$ over the set of points with minimum norm of the constraint violation). 
\cite{dai2023augmented} studies the ALM in the infeasible case within a more general convex setting similar to ours~\eqref{eq:convex_min_problem} with an approach that resembles that of \cite{chiche2016augmented}.
 Their results require several hypotheses: differentiability of all functions involved, subdifferentiability of the value function at the minimal shift, and the existence of a converging subsequence of iterates. 
 Under these hypotheses, they prove the existence of a subsequence that asymptotically satisfies KKT-like conditions for the exact augmented Lagrangian method (ALM).
  However, their results are often not applicable due to restrictive hypotheses.
   We illustrate in \Cref{example:QCQP_example} how even simple examples might not satisfy their hypotheses and thus fall outside the scope of their analysis. 

\paragraph{Infeasible and inexact proximal point algorithms.}
In \Cref{sec:IPPA}, we study an inexact proximal point algorithm (IPPA) when it is applied to a function that may have no minimizers. 
Our definition of IPPA (\Cref{algo:IPPA}) corresponds to the dual algorithm of IALM \cite{rockafellar1976augmented}.
Different types of inexact proximal point algorithms are studied in the literature; see \cite{solodov2001unified} for an analysis of various ways to define inexact proximal point iterations and their convergence properties when the problem is feasible.
It has been shown that the iterates of the (exact) proximal point algorithm diverge when there is no solution to the problem being studied, but the function values along the iterates still converge to its infimum~\cite{lemaire1992convergence}.

\paragraph{Value function.}
The value function \(\nu\) represents the optimal value of the optimization problem when the constraints are shifted (or perturbed) by a parameter \(s\), i.e., \( \nu: s \mapsto \inf \{ f(x) \mid x\in \X, \; C(x)\in \KK + s \} \).
The strength of some of our results on IALM convergence depends on whether the value function is lower-semicontinuous, subdifferentiable, or neither.
 The connection between the subdifferentiability of the value function and the existence of Kuhn-Tucker vectors for the perturbed problem, as well as the link between the lower semicontinuity of the value function and strong duality, is well-known \cite[Chapter 29 and 30]{rockafellar1970convex}.
  The value function under more general types of perturbations has also been studied in the nonconvex case. 
   For more general perturbations of the value function, see e.g. \cite{hogan1973directional,gauvin1982differential}.
    An extensive and comprehensive study of these questions in the infinite-dimensional case with general constraints can be found in \cite{bonnans2000perturbation}.

\changed{ \subsection{Contributions}
We establish the following results:
\begin{itemize}
    \item For the inexact proximal point algorithm (IPPA) applied to a convex function \(h\) that may lack a minimizer (\Cref{sec:IPPA}), 
    we characterize necessary and sufficient conditions for the convergence of the iterates, the subgradient iterates, and the convex-conjugate iterates generated by the algorithm, both with and without convergence rates.
    \item For the inexact augmented Lagrangian method (IALM) applied to a convex optimization problem that may lack a feasible solution (\Cref{sec:convergence_ialm}) or fail to satisfy a constraint qualification,
    we establish convergence of the objective function values, the constraint violations, and the primal iterates, both with and without convergence rates.
    We also provide sufficient conditions under which the limits of these quantities correspond to solutions of the closest feasible problem.
\end{itemize}}

\subsection{Notations and Preliminaries}

 \changed{We use standard notation from convex analysis. 
 In particular, $\lfloor\cdot\rfloor_+ := \max(0,\cdot)$ acts component-wise on vectors,
 $\delta_A$ denotes the indicator function of a set $A$,
 and $\R_-^m$ (resp.\ $\R_+^m$) denotes the nonpositive (resp.\ nonnegative) orthant. \(\KK^{\infty}\) denotes the cone generated by \(\KK\).
 } 
 
\bigskip

For convenience, we introduce a slack variable \(s\) into~\eqref{eq:convex_min_problem}, yielding the following equivalent reformulation
\begin{equation} \label{eq:convex_min_problem_rewritten}
    \begin{array}{rl}
        \displaystyle \inf_{x \in \X, s \in \Y} & f(x) + \delta_{\CC}(x,s) \\[1mm]
        \text{s.t.} & s = 0,
    \end{array}
\end{equation}
where the set \(\CC\) was defined in \eqref{def:CC}.
As detailed in \Cref{sec:ALM}, applying the IALM to problem~\eqref{eq:convex_min_problem_rewritten} or~\eqref{eq:convex_min_problem} is equivalent (i.e., it produces the same iterates up to a trivial change of variable). Therefore, there is no loss of generality in studying the IALM on~\eqref{eq:convex_min_problem_rewritten}.

\paragraph{Lagrangian and dual function.}
The Lagrangian function associated with problem~\eqref{eq:convex_min_problem_rewritten} is defined as
\begin{equation} \label{def:lagrangian function}
    L(x,s,\lambda) \triangleq f(x) + \delta_{ \CC}(x,s) - \inner{\lambda}{s},
\end{equation}
and we define the dual function of this problem as
\begin{equation} \label{eq:dual function}
    g(\lambda) \triangleq \underset{ \substack{ (x,s) \in \X \times \Y }}{ \text{inf }} L(x,s,\lambda) = \inf_{(x,s) \in \CC}   \left( f(x) - \inner{\lambda}{s} \right) .
\end{equation}
We define the negative dual function \(h \triangleq -g\) for notational convenience, which is convex and closed.

The augmented Lagrangian associated with problem~\eqref{eq:convex_min_problem_rewritten}, with a parameter \(\gamma >0\), is defined as
\begin{equation} \label{def:augmented_lagrangian_L_gamma}
    L_\gamma(x,s,\lambda) \triangleq f(x) + \delta_{ \CC}(x,s) - \inner{\lambda}{s} + \tfrac{\gamma}{2} \left\Vert s \right\Vert^2 .
\end{equation}
\added{ The augmented Lagrangian is used to define the inexact augmented Lagrangian method (IALM) as described in \Cref{algo:IALM}.}

\begin{algorithm}
    \caption{Inexact augmented Lagrangian method (IALM)} \label{algo:IALM}

    \textbf{Input: } Choose \(\lambda^0 \in \Y\), \((\gamma_k)_{k \in \N} \in \left(\R_{+}\backslash \{0\}\right)^\N \) and \((\varepsilon_k)_{k \in \N^*} \in \R_{+}^\N\).

    \textbf{Loop: for \(k = 0,1,2,\dots \) }

    \quad Find \(x^{k+1} \in \X\) and \(y^{k+1} \in \KK\) such that 
    
    \quad \quad \( \begin{multlined} L_{\gamma_k}(x^{k+1},s^{k+1},\lambda^k) - \inf_{(x,s) \in \CC}  L_{\gamma_k}(x,s,\lambda^k) \leq \tfrac{(\varepsilon_{k+1})^2}{2 \gamma_k}, \\ \text{where } s^{k+1} = C(x^{k+1}) - y^{k+1}. \end{multlined} \)

    \quad Set \(\lambda^{k+1} = \lambda^k - \gamma_k s^{k+1} \).

\end{algorithm}

For \( (x,s) \in \CC \) \added{(\(\CC\) is defined in \eqref{def:CC})}, we refer to \(s\) as the constraint violation vector. We can directly call \(C(x) - y\) a constraint violation vector for a given \(x \in \X\) and \(y \in \KK\), since then \((x, C(x) - y) \in \CC \).
We define the set of attainable constraint violations as 
\changed{
\begin{equation} \label{def:Scal}
\Scal  \triangleq \{ s \in \Y \;|\; \exists x \in \X \text{ such that } (x, s)\in \CC \}.
\end{equation}}
\added{\(\Scal\) is convex by the fact that it is the projection of the convex set \(\CC\) on the \(s\)-axis.}
The element of \( \cl(\Scal)\) (the closure of \(\Scal\)) with the smallest norm, denoted by \( \sBar \triangleq \arg\min_{s \in \cl(\Scal)} \|s\|^2 \), will play an important role.
 By a slight abuse of language, we call \(\sBar\) the smallest-norm constraint violation vector, although \(\sBar\) need not itself be an attainable constraint violation (i.e., \(\sBar\) may not belong to~\(\Scal\)). 

\paragraph{The value function.}
An important object is the shifted problem, where the constraint set \(\KK\) in~\eqref{eq:convex_min_problem} is shifted by a vector \(\tilde s \in \Y\):
\begin{equation} \label{eq:convex_min_problem_shifted}
    \begin{array}{rl}
        \displaystyle \nu(\tilde s) \triangleq \inf_{ x \in \X} & f(x)         \\[1mm]
        \text{s.t.}                    & C(x) \in \KK + \tilde s  .
    \end{array} \begin{array}{rl}
        \displaystyle = \inf_{ (x,s) \in \X \times \Y} & f(x)   + \delta_\CC(x,s)      \\[1mm]
        \text{s.t.}                    & s = \tilde s  .
    \end{array}
\end{equation}
In this context, \(\tilde s\) is called the shift; it is also sometimes referred to as a perturbation \cite[Chapter 28]{rockafellar1970convex}.
The function \(\nu: \Y \to \R \cup \{ \pm \infty \} \) is called the value function (or sometimes the perturbation function or optimal-value map) and is convex. 
Since \(\CC\) is nonempty, there always exists a shift \(\tilde s\) for which problem~\eqref{eq:convex_min_problem_shifted} is feasible (e.g., any \(\tilde s\) such that \((x_0, \tilde s) \in \CC\) for some \(x_0 \in \X\)), meaning that \(\nu\) is not identically \(+\infty\).
\changed{The case where the properness assumption does not hold (i.e., if \(\nu(\tilde s) = -\infty\) for some \(\tilde s\)) is trivial and is treated separately in \Cref{sec:case_where_the_shifted_problem_is_not_proper}.} 
Henceforth, we assume that \(\nu\) is proper, meaning that \(\nu(\tilde s) > -\infty\) for all \(\tilde s \in \Y\).

Since \(h(\lambda) = -g(\lambda) = \sup_{(x,s) \in \CC}  \inner{\lambda}{s} - f(x) =  \sup_{s \in \Y}  \inner{\lambda}{s} - \nu(s) \) is the convex conjugate of \(\nu\), it is proper and closed \cite[Theorem 12.2]{rockafellar1970convex}.

The \emph{conjugate dual} is the Fenchel conjugate of the negative dual function 
\begin{equation*}
    h^*(s) \triangleq \sup_{\lambda \in \Y} \inner{s}{\lambda} - h(\lambda) = \sup_{\lambda \in \Y} \inner{s}{\lambda} + g(\lambda) .
\end{equation*}
Hence, \(h^* = \nu^{**}\) is also the closure of the value function \(\nu\) \cite[Theorem 7]{rockafellar1974conjugate}.

\paragraph{Inexact proximal point algorithm.}
The proximal point algorithm for minimizing the function \(h\) involves iterating the proximal point operator, defined for a step size \(\gamma > 0\) as:
\begin{equation} \label{eq:prox-point-operator}
    \Prox_{\gamma h}(\lambda)  \triangleq  \argmin_{\mu \in \Y} \left( h(\mu) + \tfrac{1}{2\gamma}\left\Vert \lambda - \mu \right\Vert^2 \right) .
\end{equation}
The proximal operator is well-defined and single-valued when \(h\) is closed, convex \added{and proper} \cite{martinet1970breve}.
We refer to the method described in \Cref{algo:IPPA}, when applied to \(h\), as the inexact proximal point algorithm (IPPA). 
In this algorithm, the auxiliary sequences \( ( \lambda_\star^{k+1}, s_\star^{k+1} ) \) \added{are added for analytical convenience} and denote the iterates that would have been obtained from \( ( \lambda^{k}, s^{k} ) \) had there been no error (meaning had \(\varepsilon_k\) been equal to zero).

\begin{algorithm}
    \caption{Inexact proximal point algorithm (IPPA)}\label{algo:IPPA}
    \textbf{Input: } Choose \(\lambda^0 \in \Y\), \((\gamma_k)_{k \in \N} \in \left(\R_{+}\backslash \{0\}\right)^\N \) and \((\varepsilon_k)_{k \in \N^*} \in \R_{+}^\N\).

    \textbf{Loop: for \(k = 0,1,2,\dots\)}
    
    \quad Let \( \lambda_\star^{k+1} = \Prox_{\gamma_k h}(\lambda^k)\).

    \quad Let \(s^{k+1}_\star = \tfrac{ \lambda^k - \lambda_\star^{k+1} }{\gamma_k} \).

    \quad Find \( \lambda^{k+1} \in \Y \) such that  \( \left\Vert \lambda^{k+1} - \lambda_\star^{k+1} \right\Vert \leq \varepsilon_{k+1} \).

    \quad Let \(s^{k+1} = \tfrac{ \lambda^k - \lambda^{k+1} }{\gamma_k} \).
\end{algorithm}

\paragraph{Assumptions on the errors and step sizes.}

\changed{
In this paper, we make the following very mild assumptions.
\begin{assumption} \label{assumption:errors_and_step_sizes}
    The step sizes \((\gamma_k)_{k\in\N}\) are positive, the errors \((\varepsilon_k)_{k\in\N}\) are nonnegative, and the following assumptions hold:
\begin{itemize}[leftmargin=3.5em]
    \item[\mylabel{(A1)} ] \(\sum_{k=1}^\infty \varepsilon_k < \infty\).
    \item[\mylabel{(A2)} ] \(\tfrac{\varepsilon_{k+1}}{\gamma_k} \sum_{i=0}^k \gamma_i  = O\left( \tfrac{1}{\sqrt{\sum_{i=0}^k \gamma_i }} \right) \).
    \item[\mylabel{(A3)} ] \(\sum_{k=1}^\infty \left( \left( \varepsilon_{k} / \gamma_k \right)^2 \sum_{i=0}^{k-1} \gamma_i \right) < \infty\).
    \item[\mylabel{(A4)} ] \(\sum_{k=1}^{\infty} \left( ( \varepsilon_{k} / \gamma_k ) \sum_{i=1}^{k}   \gamma_{i-1} \right)  < \infty\).
    \item[\mylabel{(A5)} ] \(\sum_{k=0}^\infty \gamma_k = \infty\).
\end{itemize}
\end{assumption}
These assumptions ensure that the error decreases sufficiently fast to zero. For example, if the step sizes are constant \(\gamma_i = \gamma\),
 then the choice of error \( \varepsilon_k = \tfrac{1}{ ((k+1)\ln(k+1))^2} \) satisfies all the assumptions.
 Assumption (A5) is standard in the literature.
 Not all assumptions are used in all results but we group them together for readability.
}


\subsection{Reformulation of IALM as an inexact proximal point algorithm} \label{sec:IALM is IPPA}

\added{In this subsection, we identify the relationship between the IALM and quantities of the dual problem, which will be used in \Cref{sec:convergence_ialm} to translate the results of \Cref{sec:IPPA} into results on IALM.}
It is well known that the inexact augmented Lagrangian method can be interpreted as an inexact proximal point algorithm on the dual function \cite[Proposition 6]{rockafellar1970convex}. We recall this result here for completeness\deleted{ and because some elements of the proof are used later in this work}. 
\begin{proposition} \cite[Proposition 6]{rockafellar1976augmented} \label{prop:approx aug Lagrangian is approx prox point}
   Consider the convex optimization problem~\eqref{eq:convex_min_problem}. For any sequence \((\lambda_\star^{k+1}, s_\star^{k+1}, \lambda^{k+1} , s^{k+1})_{k\in\N}\) generated by IALM (\Cref{algo:IALM}) on \eqref{eq:convex_min_problem} with positive penalty parameters \((\gamma_k)_{k\in\N} \) and nonnegative errors \((\varepsilon_k)_{k\in\N}\), 
    \( (\lambda^k , s^k)_{k\in \N}\) is a valid sequence of iterates of IPPA (\Cref{algo:IPPA}) on the negative dual function \(h\) with step sizes \((\gamma_k)_{k\in\N} \) and errors \((\varepsilon_k)_{k\in\N}\).
\end{proposition}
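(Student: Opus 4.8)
The plan is to check that, started from the common $\lambda^0$ and run with the same penalty parameters $(\gamma_k)_{k\in\N}$ and errors $(\varepsilon_k)_{k\in\N}$, the iterates $(\lambda^k,s^k)_{k\in\N}$ produced by IALM (\Cref{algo:IALM}) are admissible iterates of IPPA (\Cref{algo:IPPA}) applied to $h$. Since one IPPA iteration is a function of the current $\lambda^k$ alone, it suffices, by a straightforward induction on $k$ (the base case $\lambda^0$ being trivial), to verify a single generic step: writing $\lambda_\star^{k+1}=\Prox_{\gamma_k h}(\lambda^k)$ and $s_\star^{k+1}=(\lambda^k-\lambda_\star^{k+1})/\gamma_k$ as in \Cref{algo:IPPA}, I must establish (i)~$s^{k+1}=(\lambda^k-\lambda^{k+1})/\gamma_k$ and (ii)~$\|\lambda^{k+1}-\lambda_\star^{k+1}\|\le\varepsilon_{k+1}$. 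Item (i) is immediate from the multiplier update $\lambda^{k+1}=\lambda^k-\gamma_k s^{k+1}$ of \Cref{algo:IALM}, so all the work lies in (ii).

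First I would rewrite the inner minimization of the augmented Lagrangian~\eqref{def:augmented_lagrangian_L_gamma} as a Moreau-type regularization of the value function~\eqref{eq:convex_min_problem_shifted}. Setting $F(x,s)=f(x)+\delta_{\CC}(x,s)$, so that $\inf_x F(x,\cdot)=\nu$, and completing the square in $s$ gives
\[
    L_{\gamma_k}(x,s,\lambda^k) = F(x,s) + \tfrac{\gamma_k}{2}\bigl\| s - \tfrac{\lambda^k}{\gamma_k} \bigr\|^2 - \tfrac{1}{2\gamma_k}\|\lambda^k\|^2 ,
\]
and, minimizing first over $x$ (which turns $F$ into $\nu$),
\[
    \inf_{(x,s)\in\CC} L_{\gamma_k}(x,s,\lambda^k) = \inf_{s\in\Y} \bigl( \nu(s) + \tfrac{\gamma_k}{2}\bigl\| s - \tfrac{\lambda^k}{\gamma_k} \bigr\|^2 \bigr) - \tfrac{1}{2\gamma_k}\|\lambda^k\|^2 .
\]
Because the problem may be infeasible, $\nu$ need not be lower semicontinuous and this last infimum need not be attained; since the added quadratic is finite and continuous, however, replacing $\nu$ by its closure $h^*=\cl\nu$ leaves this infimum unchanged. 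Thus, writing $\psi_k(s):=h^*(s)+\tfrac{\gamma_k}{2}\| s-\lambda^k/\gamma_k\|^2$ — a proper, closed, $\gamma_k$-strongly convex function — we have $\inf_{(x,s)\in\CC} L_{\gamma_k}(x,s,\lambda^k)=\min\psi_k-\tfrac{1}{2\gamma_k}\|\lambda^k\|^2$, and the unique minimizer $\bar s$ of $\psi_k$ satisfies $\lambda^k-\gamma_k\bar s\in\partial h^*(\bar s)$, hence (using $h^{**}=h$, valid since $h$ is closed proper convex) $\bar s\in\partial h(\lambda^k-\gamma_k\bar s)$, hence $\lambda^k\in(\mathrm{Id}+\gamma_k\partial h)(\lambda^k-\gamma_k\bar s)$, hence $\lambda^k-\gamma_k\bar s=\Prox_{\gamma_k h}(\lambda^k)=\lambda_\star^{k+1}$; equivalently $\bar s=(\lambda^k-\lambda_\star^{k+1})/\gamma_k=s_\star^{k+1}$. (This is just the extended Moreau decomposition $\lambda^k=\Prox_{\gamma_k h}(\lambda^k)+\gamma_k\Prox_{h^*/\gamma_k}(\lambda^k/\gamma_k)$.)

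Next I would propagate the IALM tolerance through the strong convexity of $\psi_k$. Since $F(x^{k+1},s^{k+1})\ge\nu(s^{k+1})\ge h^*(s^{k+1})$, the completed-square form gives $L_{\gamma_k}(x^{k+1},s^{k+1},\lambda^k)\ge\psi_k(s^{k+1})-\tfrac{1}{2\gamma_k}\|\lambda^k\|^2$, so the acceptance criterion of \Cref{algo:IALM} yields $\psi_k(s^{k+1})-\min\psi_k\le\tfrac{(\varepsilon_{k+1})^2}{2\gamma_k}$. The $\gamma_k$-strong convexity of $\psi_k$ at its minimizer then gives $\tfrac{\gamma_k}{2}\|s^{k+1}-s_\star^{k+1}\|^2\le\psi_k(s^{k+1})-\min\psi_k$, whence $\|s^{k+1}-s_\star^{k+1}\|\le\varepsilon_{k+1}/\gamma_k$; multiplying by $\gamma_k$ and using $\lambda^{k+1}=\lambda^k-\gamma_k s^{k+1}$ together with $\lambda_\star^{k+1}=\lambda^k-\gamma_k s_\star^{k+1}$ gives $\|\lambda^{k+1}-\lambda_\star^{k+1}\|\le\varepsilon_{k+1}$, which is (ii).

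I expect the only genuine subtlety to be the lower-semicontinuity point in the middle step: because infeasibility permits $\nu$ to be non-lsc, the inner augmented-Lagrangian problem may have no minimizer, so one cannot speak of ``the optimal $s$'' directly, and must instead pass to $h^*=\cl\nu$, verify this leaves the value $\inf_{(x,s)\in\CC} L_{\gamma_k}(x,s,\lambda^k)$ unchanged, and only then run the inexactness estimate against the now-attained, strongly convex minimization. The remaining ingredients — completing the square, the Moreau/conjugacy identification of $s_\star^{k+1}$, and the quadratic-growth bound from strong convexity — are routine.
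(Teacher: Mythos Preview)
Your proof is correct but follows a genuinely different route from the paper's. The paper works on the dual side: it introduces an auxiliary variable and swaps $\inf_{(x,s)}\sup_\lambda$ via a saddle-point theorem to obtain directly $\inf_{(x,s)\in\CC} L_{\gamma_k}(x,s,\lambda^k)=-h(\lambda_\star^{k+1})-\tfrac{1}{2\gamma_k}\|\lambda^k-\lambda_\star^{k+1}\|^2$, then lower-bounds $L_{\gamma_k}(x^{k+1},s^{k+1},\lambda^k)$ by a $\mu$-shifted version of that same expression, subtracts, and optimizes over the free $\mu$ to squeeze out $\tfrac{1}{2\gamma_k}\|\lambda^{k+1}-\lambda_\star^{k+1}\|^2$. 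You instead stay on the primal/conjugate side: complete the square, minimize over $x$ to reach the value function $\nu$, pass to its closure $h^*$ (the one genuine subtlety, which you correctly flag and which is indeed harmless because the added quadratic is continuous), identify the resulting proximal point in $s$ with $s_\star^{k+1}$ via the Moreau decomposition, and conclude with the quadratic-growth bound from $\gamma_k$-strong convexity of $\psi_k$. Your argument avoids the saddle-point citation and the somewhat opaque optimal-$\mu$ trick, and the strong-convexity step is cleaner; the paper's argument has the compensating advantage that its intermediate identity~\eqref{eq:equal1_proof_IALMisIPPA} is reused verbatim later in the proof of \Cref{prop:h*(sk) close to f(xk)}, whereas your equivalent identity $\min\psi_k-\tfrac{1}{2\gamma_k}\|\lambda^k\|^2=\inf_{(x,s)\in\CC}L_{\gamma_k}(x,s,\lambda^k)$ would require a short Fenchel--Young translation before it could be reused there.
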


\begin{proof}
First, let us note that, for any \(\mu \in \Y\), the following holds 
\begin{align}
    \inf_{(x,s)\in\CC} \; L_{\gamma_k}( x, s ,\mu) & = \inf_{(x,s)\in\CC} \; f(x) - \inner{\mu}{s} + \tfrac{\gamma_k}{2} \left\Vert s \right\Vert^2 \\
    & = \inf_{(x,s)\in\CC} \; \sup_{\lambda \in \Y} \;  f(x) -  \inner{\lambda}{s} - \tfrac{1}{2\gamma_k} \left\Vert  \mu - \lambda \right\Vert^2 \nonumber \\
    & =  \sup_{\lambda \in \Y} \; \inf_{(x,s)\in\CC} \;  f(x) - \inner{\lambda}{s} - \tfrac{1}{2\gamma_k} \left\Vert  \mu - \lambda \right\Vert^2 \nonumber \\
    & = \sup_{\lambda \in \Y} \; -h(\lambda)  - \tfrac{1}{2\gamma_k} \left\Vert  \mu - \lambda \right\Vert^2,  \label{eq:inf_sup_swap}
\end{align}
where we used \cite[Theorem 6]{rockafellar1971saddle} to swap \(\inf\)  and \(\sup \) which is applicable because for any \(s\in \Y\) the concave function  \(\lambda \to  f(x) - \inner{\lambda}{s} - \tfrac{1}{2\gamma} \left\Vert  \mu - \lambda \right\Vert^2 \)  has bounded level sets. 

Using \eqref{eq:inf_sup_swap}, observe that an exact ALM step corresponds to an exact proximal point step on the dual:
\begin{align}
    \inf_{(x,s)\in\CC} \; L_{\gamma_k}( x, s ,\lambda^k) & = \sup_{\lambda \in \Y} -h(\lambda)  - \tfrac{1}{2 \gamma_k} \left\Vert  \lambda^k - \lambda \right\Vert^2 \nonumber                                                             \\
    & = -h(\lambda_\star^{k+1})  - \tfrac{1}{2 \gamma_k} \left\Vert  \lambda^k - \lambda_\star^{k+1} \right\Vert^2 \label{eq:equal1_proof_IALMisIPPA}
\end{align}

We now lower bound the value of the augmented Lagrangian when it is approximately minimized. For any \(\mu \in \Y\) we have 
\begin{align}
    L_{\gamma_k}(x^{k+1},s^{k+1},\lambda^k)  - \inner{\mu - \lambda^k}{s^{k+1}} & = f(x^{k+1}) - \inner{\mu}{s^{k+1}} + \tfrac{\gamma}{2} \left\Vert s^{k+1} \right\Vert^2 \nonumber \\ 
        &= L_{\gamma_k}(x^{k+1},s^{k+1}, \mu ) \nonumber      \\
    & \geq \inf_{(x,s)\in\CC} \; L_{\gamma_k}( x, s ,\mu) \nonumber  \\
    &= \begin{multlined}[t]
      \sup_{\lambda \in \Y} \; -h(\lambda)  - \tfrac{1}{2\gamma_k} \left\Vert  \mu - \lambda \right\Vert^2 \\ \text{(using \eqref{eq:inf_sup_swap})}  
      \end{multlined}    \nonumber \\
    & \geq -h(\lambda_\star^{k+1})  - \tfrac{1}{2\gamma_k} \left\Vert  \mu - \lambda_\star^{k+1} \right\Vert^2  \label{eq:inequ1_proof_IALMisIPPA} .
\end{align}

Taking the difference between \eqref{eq:inequ1_proof_IALMisIPPA} and \eqref{eq:equal1_proof_IALMisIPPA}, we get, for any \(\mu \in \Y \):
\begin{multline*}
  L_{\gamma_k}(x^{k+1},s^{k+1},\lambda^k)  - \inf_{(x,s)\in\CC}\; L_{\gamma_k}( x, s ,\lambda^k)    \\
  \geq  - \tfrac{1}{2\gamma_k} \left\Vert  \mu - \lambda_\star^{k+1} \right\Vert^2 + \inner{\mu - \lambda^k}{s^{k+1}} + \tfrac{1}{2 \gamma_k} \left\Vert  \lambda^k - \lambda_\star^{k+1} \right\Vert^2 .
\end{multline*}

Hence, the optimal choice \(\mu = \lambda_\star^{k+1} - \lambda^{k+1} + \lambda^{k} \) with \( s^{k+1} = \tfrac{\lambda^k - \lambda^{k+1}}{\gamma_k} \) yields:
\begin{equation*}
    L_{\gamma_k}(x^{k+1},s^{k+1},\lambda^k)  - \inf_{(x,s)\in\CC}\; L_{\gamma_k}( x, s ,\lambda^k) \geq \tfrac{\left\Vert  \lambda^{k+1} - \lambda_\star^{k+1} \right\Vert^2}{2\gamma_k} .
\end{equation*}

Therefore if \( L_{\gamma_k}(x^{k+1},s^{k+1},\lambda^k)  - \inf_{(x,s)\in\CC}\; L_{\gamma_k}( x, s ,\lambda^k) \leq \tfrac{ (\varepsilon_{k+1} )^2}{2 \gamma_k}\) then \(\left\Vert  \lambda^{k+1} - \lambda_\star^{k+1} \right\Vert \leq  \varepsilon_{k+1}  \). This shows that the sequence \( (\lambda^k)_{k\in\N}\) is indeed following the iterative rules described in \Cref{algo:IPPA}.  
\end{proof}

A direct consequence of \Cref{prop:approx aug Lagrangian is approx prox point} is that any property shown on iterates of \Cref{algo:IPPA} also applies to iterates from \Cref{algo:IALM}.

\added{Recall that \(\Scal\) is defined in~\eqref{def:Scal} and \(\sBar\) is defined right below~\eqref{def:Scal}.} The following lemma allows us to define \(\sBar\) completely in terms of the dual function as \(\sBar = \argmin \{\Vert s \Vert \; |\; s \in \cl( \range (\partial h))\}\) without referring to the primal set \(\Scal\). 
\begin{lemma} \label{lem:set_inclusions} 
    \( \cl(\Scal) = \cl( \dom (\partial h^*)) = \cl( \range (\partial h))\).
\end{lemma}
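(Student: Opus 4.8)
The plan is to prove the chain of equalities by establishing the two set identities $\cl(\Scal) = \cl(\dom(\partial h^*))$ and $\cl(\dom(\partial h^*)) = \cl(\range(\partial h))$ separately, with the bulk of the work being the first one. The starting point is the observation already recorded in the excerpt that $h = \nu^*$ and $h^* = \cl(\nu)$. Since $\Scal$ is the set of $s$ for which the shifted problem~\eqref{eq:convex_min_problem_shifted} is feasible, we have $\Scal = \dom(\nu)$ — indeed $\nu(\tilde s) < +\infty$ exactly when there is some $x$ with $(x,\tilde s)\in\CC$, i.e.\ $\tilde s\in\Scal$ (here the properness assumption on $\nu$ guarantees $\nu(\tilde s) > -\infty$ everywhere, so $\dom(\nu)$ is genuinely the set of points of finiteness). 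So the first identity becomes $\cl(\dom(\nu)) = \cl(\dom(\partial(\cl\nu)))$.

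First I would reduce everything to a statement about a single closed proper convex function $\varphi \triangleq h^* = \cl(\nu)$. Two standard facts do the job. (i) Taking closures, $\cl(\dom(\nu)) = \cl(\dom(\cl\nu)) = \cl(\dom\varphi)$: this is because $\cl\nu$ and $\nu$ have domains with the same closure (the closure of a convex function differs from it only on the relative boundary of its domain, cf.\ \cite[Chapter 7]{rockafellar1970convex} / \cite[Chapter 9]{bauschke2017convex}). (ii) For any closed proper convex function $\varphi$ on a Hilbert space, $\cl(\dom(\partial\varphi)) = \cl(\dom\varphi)$; one inclusion is trivial since $\dom(\partial\varphi)\subseteq\dom\varphi$, and the reverse uses that $\dom(\partial\varphi)$ is dense in $\dom\varphi$ (Brøndsted--Rockafellar in the Hilbert-space setting — every point of $\dom\varphi$ is a limit of points at which $\partial\varphi\neq\emptyset$). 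Combining (i) and (ii) with $\varphi = h^*$ gives $\cl(\Scal) = \cl(\dom(\partial h^*))$.

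For the second identity $\cl(\dom(\partial h^*)) = \cl(\range(\partial h))$, the key is the conjugacy/inversion relation for subdifferentials: since $h$ is closed proper convex and $h^* $ its conjugate, $s\in\partial h(\lambda) \iff \lambda\in\partial h^*(s)$. Hence $\range(\partial h) = \{s : \exists\lambda,\ s\in\partial h(\lambda)\} = \{s : \partial h^*(s)\neq\emptyset\} = \dom(\partial h^*)$, so in fact the two sets are equal before taking closures, and the identity follows a fortiori.

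The main obstacle — really the only non-formal point — is making sure the density statement in (ii) (that $\dom(\partial\varphi)$ is dense in $\dom\varphi$) is invoked in a form valid in an arbitrary real Hilbert space rather than only in finite dimensions; this is exactly what the Brøndsted--Rockafellar theorem provides, and it is the reason the lemma holds in the infinite-dimensional setting. A secondary point to be careful about is that the properness of $\nu$ (assumed in the excerpt) is what lets us identify $\Scal$ with $\dom(\nu)$ and what keeps $\cl(\nu)$ proper so that its conjugacy theory applies; I would state this reliance explicitly.
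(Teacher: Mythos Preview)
Your proof is correct and follows essentially the same approach as the paper: both identify $\Scal = \dom(\nu)$, use that $h^* = \cl(\nu)$ has domain with the same closure as $\nu$, invoke Br{\o}ndsted--Rockafellar to get $\cl(\dom(\partial h^*)) = \cl(\dom(h^*))$, and use the subdifferential inversion $\dom(\partial h^*) = \range(\partial h)$ for closed $h$. Your version is simply more explicit about the infinite-dimensional validity of Br{\o}ndsted--Rockafellar and the role of the properness assumption on $\nu$, which are points the paper leaves implicit.
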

 
\begin{proof}
The Brondsted-Rockafellar theorem \cite[Theorem 2]{brondsted1965subdifferentiability} states that \\ \( \cl(\dom(\partial h^*)) = \cl(\dom(h^*)) \). Since \(h^*\) is the closure of \(\nu\) \added{ \cite[Theorem 7]{rockafellar1974conjugate}}, we also have \( \cl(\dom( h^*)) = \cl(\dom(\nu)) \). From the definition of \(\nu\), it is straightforward that \(\Scal = \dom(\nu) \). These three equalities combined lead to the desired result \( \cl(\Scal) = \cl( \dom (\partial h^*))\). 
Moreover, \(\dom (\partial h^*) = \range (\partial h)\) is well known when \(h\) is closed \cite[Theorem 23.5]{rockafellar1970convex}. 
\end{proof}

 The next proposition establishes a link between the primal objective values \(f(x^k)\) of the IALM algorithm and the exact dual conjugate values \(h^*(s_\star^k) \). 

 \begin{proposition} \label{prop:h*(sk) close to f(xk)}
     Let \((\lambda_\star^{k+1}, s_\star^{k+1}, \lambda^{k+1} , s^{k+1})_{k\in\N}\) be a sequence generated by an IALM (\Cref{algo:IALM}) associated with Problem \eqref{eq:convex_min_problem} \changed{with errors \((\varepsilon_k)_{k\in\N^*}\) and step sizes \((\gamma_k)_{k\in\N}\) satisfying \Cref{assumption:errors_and_step_sizes}}. 
      If the sequence \( (s^k)_{k\in\N} \) is bounded then \( | f(x^{k+1}) - h^*(s_\star^{k+1}) | = O\left(\tfrac{1}{\sqrt{\sum_{i=0}^k \gamma_i}} \right) \).
 \end{proposition}
 
 \begin{proof}
We have \( \lambda^{k+1}_\star = \argmin_{\mu\in\Y} \left( h(\mu) + \tfrac{1}{2\gamma_k} \Vert \lambda^k - \mu\Vert^2\right)\).
Therefore, \( s_\star^{k+1} = \tfrac{\lambda^k - \lambda^{k+1}_\star}{\gamma_k} \in \partial h(\lambda_\star^{k+1})\).
Therefore, using the Fenchel-Young equality, \( h(\lambda_\star^{k+1}) + h^*(s_\star^{k+1}) = \inner{s_\star^{k+1}}{\lambda_\star^{k+1}} \), and \eqref{eq:equal1_proof_IALMisIPPA}, yields:
\begin{align} 
    \inf_{(x,s)\in\CC} L_{\gamma_k}( x, s ,\lambda^k) & = -h(\lambda_\star^{k+1}) - \tfrac{1}{2 \gamma_k} \left\Vert \lambda_\star^{k+1} - \lambda^k \right\Vert^2    \nonumber          \\
    & = h^*(s_\star^{k+1}) - \inner{s_\star^{k+1}}{\lambda^{k}}  + \tfrac{\gamma_k}{2 } \left\Vert s_\star^{k+1} \right\Vert^2 . \nonumber
\end{align} 
We use this equality in the following expression of the error in the IALM:
\begin{align}
    &L_{\gamma_k}(x^{k+1}, s^{k+1}, \lambda^k) - \inf_{(x,s)\in\CC} L_{\gamma_k}( x, s ,\lambda^k) \nonumber \\
    &= \begin{multlined}[t]
      f(x^{k+1}) - \inner{s^{k+1}}{\lambda^k} +  \tfrac{\gamma_k}{2} \left\Vert s^{k+1} \right\Vert^2 - \quad \\ \left(h^*(s_\star^{k+1}) - \inner{s_\star^{k+1}}{\lambda^{k}}  + \tfrac{\gamma_k}{2 } \left\Vert s_\star^{k+1} \right\Vert^2\right)  
      \end{multlined} \nonumber \\
    &=  f(x^{k+1}) - h^*(s_\star^{k+1}) + \inner{s_\star^{k+1} - s^{k+1}}{\lambda^k} + \tfrac{\gamma_k}{2} \left( \left\Vert s^{k+1} \right\Vert^2 - \left\Vert s_\star^{k+1} \right\Vert^2 \right) \nonumber \\
    &= \added{ \begin{multlined}[t] f(x^{k+1}) - h^*(s_\star^{k+1}) + \inner{ \tfrac{\lambda^{k+1} - \lambda_\star^{k+1}}{\gamma_k} }{\lambda^k} \\ \quad + \tfrac{1}{2\gamma_k} \left( \left\Vert \lambda^k - \lambda^{k+1} \right\Vert^2 - \left\Vert \lambda^{k} - \lambda_\star^{k+1} \right\Vert^2 \right) \end{multlined} }  \nonumber \\
    &= f(x^{k+1}) - h^*(s_\star^{k+1}) + \tfrac{1}{2\gamma_k} \left( \Vert \lambda^{k+1} \Vert^2 - \Vert \lambda_\star^{k+1} \Vert^2 \right). \label{eq:augmented_lagrangian_dual_conjugate_inequality}
\end{align}
Furthermore, the inexactness condition from \Cref{algo:IALM} gives:
\begin{equation*} 
    0 \leq L_{\gamma_k}(x^{k+1}, s^{k+1}, \lambda^k) - \inf_{(x,s)\in\CC} L_{\gamma_k}( x, s ,\lambda^k) \leq \tfrac{\left(\varepsilon_{k+1}\right)^2}{2\gamma_k}
\end{equation*}
which, using \eqref{eq:augmented_lagrangian_dual_conjugate_inequality} becomes:
\begin{equation} \label{eq:f(xk) - h*(s_k) bound}
     \vert f(x^{k+1})   - h^*(s_\star^{k+1}) \vert \leq \tfrac{1}{2 \gamma_k} \left\vert \left( \Vert \lambda^{k+1} \Vert^2 - \Vert \lambda_\star^{k+1} \Vert^2 \right) \right\vert + \tfrac{(\varepsilon_{k+1})^2}{2\gamma_k} . 
\end{equation}
\changed{Choose an upper bound $M$ such that $ \Vert s^{i+1} \Vert \leq M$ for all $i \in \N$. Then}
\begin{align}
    \tfrac{1}{2\gamma_k} \left| \left\Vert \lambda^{k+1} \right\Vert^2 - \left\Vert  \lambda_\star^{k+1}  \right\Vert^2  \right| & = \tfrac{1}{2\gamma_k} \left|  \inner{\lambda^{k+1} - \lambda_\star^{k+1}}{\lambda^{k+1} + \lambda_\star^{k+1}} \right|   \nonumber     \\
                & \leq \tfrac{1}{2\gamma_k} \Vert\lambda^{k+1} - \lambda_\star^{k+1} \Vert \Vert \lambda^{k+1} + \lambda_\star^{k+1} \Vert \nonumber      \\
                & \leq \tfrac{1}{2\gamma_k} \varepsilon_{k+1}  \Vert \lambda^{k+1} + \lambda_\star^{k+1} \Vert    \nonumber   \\
                & \leq \tfrac{\varepsilon_{k+1}}{2\gamma_k} \left( \Vert 2\lambda^{k+1} \Vert  + \Vert   \lambda_\star^{k+1} - \lambda^{k+1} \Vert \right)    \nonumber  \\
                & \leq \tfrac{\varepsilon_{k+1}}{2\gamma_k} \left( 2 \left\Vert  \lambda^0 - \sum_{i=0}^{k} \gamma_i s^{i+1}\right\Vert  + \left\Vert \lambda_\star^{k+1} - \lambda^{k+1}   \right\Vert \right)    \nonumber  \\
                & \leq \begin{multlined}[t] \tfrac{\varepsilon_{k+1}}{\gamma_k} \left(\left\Vert \lambda^0 \right\Vert + M \sum_{i=0}^k \gamma_i + \tfrac{\varepsilon_{k+1}}{2} \right) .\end{multlined} \label{eq:f(xk) - h*(s_k) bound 2}
\end{align}
Combining \eqref{eq:f(xk) - h*(s_k) bound 2} and  \eqref{eq:f(xk) - h*(s_k) bound}, we obtain  
 \begin{align*}
     \left| f(x^{k+1}) - h^*(s_\star^{k+1}) \right| &\leq \tfrac{\varepsilon_{k+1}}{\gamma_k} \left(\left\Vert \lambda^0 \right\Vert + M \sum_{i=0}^k \gamma_i + \tfrac{\varepsilon_{k+1}}{2}  \right) + \tfrac{(\varepsilon_{k+1})^2}{2\gamma_k} \\ 
     &= O\left( \tfrac{\varepsilon_{k+1}}{\gamma_k} \sum_{i=0}^k \gamma_i \right) . 
 \end{align*}
\added{Where we used \(\varepsilon_{k+1} \to 0 \) due to \ref{(A1)}.}
           
If \ref{(A2)} holds, then  \( | f(x^{k+1}) - h^*(s_\star^{k+1}) | = O\left(\tfrac{1}{\sqrt{\sum_{i=0}^k \gamma_i}} \right) \).
 \end{proof}

 In the preceding proposition, the condition that the sequence \( (s^k)_{k\in\N} \) is bounded is actually not restrictive because, as we see below in \Cref{lem:IPPA_properties}, the sequence \( (s^k)_{k\in\N} \) is always bounded when \changed{\Cref{assumption:errors_and_step_sizes} holds}.


\section{Inexact Proximal Point Algorithm} \label{sec:IPPA}
This section provides convergence results for the inexact proximal point algorithm (IPPA), as defined in \Cref{algo:IPPA}, when applied to a closed, proper,
convex function \(h\). The function \(h\) does not necessarily have a minimizer and is not assumed to be bounded from below.
Proofs in this section were facilitated by the performance estimation approach (PEP) \cite{drori2014performance,taylor2017exact}.

\begin{lemma} \label{lem:IPPA_properties}
Let \(h\) be a closed, proper, convex function.
 Let the sequence \\ \((\lambda_\star^{k+1}, s_\star^{k+1}, \lambda^{k+1} , s^{k+1})_{k\in\N}\) be generated by an IPPA (\Cref{algo:IPPA}) on \(h\) 
 \changed{with errors \((\varepsilon_k)_{k\in\N^*}\) and step sizes \((\gamma_k)_{k\in\N}\) satisfying \Cref{assumption:errors_and_step_sizes}}.
  The sequences \( (s_\star^k)_{k \in \N} \) and \( (s^k)_{k\in\N} \) are bounded.
\end{lemma}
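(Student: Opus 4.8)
The key object is the distance from the iterate $\lambda^k$ to a fixed reference point. Since $h$ may have no minimizer, I cannot use a minimizer as the reference; instead I would pick any point $\bar\lambda \in \operatorname{dom}(\partial h)$ — equivalently a point where some subgradient $\bar s \in \partial h(\bar\lambda)$ exists — and track the quantity $d_k = \|\lambda^k - \bar\lambda\|$. The plan is to derive a recursion of the form $d_{k+1} \le d_k + (\text{small error terms}) + (\text{term controlled by }\gamma_k)$ that, together with hypotheses \ref{(H1)}, \ref{(H3)}, \ref{(H5)}, yields boundedness of $d_k$ and then of the scaled increments $s_\star^{k+1} = (\lambda^k - \lambda_\star^{k+1})/\gamma_k$ and $s^{k+1} = (\lambda^k - \lambda^{k+1})/\gamma_k$.

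First I would analyze the exact proximal step. Since $\lambda_\star^{k+1} = \Prox_{\gamma_k h}(\lambda^k)$ and $s_\star^{k+1} \in \partial h(\lambda_\star^{k+1})$, the firm non-expansiveness / monotonicity of the proximal operator gives the standard inequality $\|\lambda_\star^{k+1} - \bar\lambda\|^2 \le \|\lambda^k - \bar\lambda\|^2 - \|\lambda^k - \lambda_\star^{k+1}\|^2 + 2\gamma_k \langle \bar s, \lambda_\star^{k+1} - \bar\lambda\rangle$ (using $\bar s \in \partial h(\bar\lambda)$ and monotonicity of $\partial h$: $\langle s_\star^{k+1} - \bar s, \lambda_\star^{k+1} - \bar\lambda\rangle \ge 0$, with $s_\star^{k+1} = (\lambda^k - \lambda_\star^{k+1})/\gamma_k$). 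The cross term is the delicate point — it is not obviously sign-definite because $\bar\lambda$ is not a minimizer. I would bound $\langle \bar s, \lambda_\star^{k+1} - \bar\lambda\rangle \le \|\bar s\|\,\|\lambda_\star^{k+1} - \bar\lambda\|$ and absorb it: writing $a_k = \|\lambda_\star^{k+1}-\bar\lambda\|$ and $b_k = \|\lambda^k - \bar\lambda\|$, we get $a_k^2 \le b_k^2 + 2\gamma_k\|\bar s\| a_k$, hence $a_k \le b_k + 2\gamma_k\|\bar s\|$ (completing the square / solving the quadratic inequality). Then the inexact step contributes $\|\lambda^{k+1} - \bar\lambda\| \le a_k + \varepsilon_{k+1} = \|\lambda_\star^{k+1} - \bar\lambda\| + \varepsilon_{k+1}$, so altogether $b_{k+1} \le b_k + 2\gamma_k\|\bar s\| + \varepsilon_{k+1}$.

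This recursion alone is too weak (it allows $b_k$ to grow like $\sum \gamma_i$, which diverges under \ref{(H5)}), so the cross term must be handled more carefully rather than by the crude Cauchy–Schwarz bound. The real obstacle is precisely this: I need to exploit that $\sBar = \argmin\{\|s\| : s\in \cl(\operatorname{range}\partial h)\}$ and choose $\bar s$ close to $\sBar$, using the obtuse-angle/variational characterization $\langle \sBar, s - \sBar\rangle \ge 0$ for all $s \in \cl(\operatorname{range}\partial h)$ to get a one-sided bound $\langle \bar s, \lambda_\star^{k+1}-\bar\lambda\rangle \le \langle \bar s, s_\star^{k+1}\text{-related quantity}\rangle$ that is genuinely controlled. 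Concretely, since $s_\star^{k+1}\in \operatorname{range}(\partial h)$ one has $\langle \sBar, s_\star^{k+1} - \sBar \rangle \ge 0$, i.e. $\langle \sBar, s_\star^{k+1}\rangle \ge \|\sBar\|^2 \ge 0$; combined with $\lambda^k - \lambda_\star^{k+1} = \gamma_k s_\star^{k+1}$ this should let me show $\langle \lambda^k - \lambda_\star^{k+1}, \bar\lambda - \lambda_\star^{k+1}\rangle$-type terms have a favorable sign, turning the recursion into $b_{k+1}^2 \le b_k^2 + (\text{summable error terms from }\varepsilon_{k+1}, \text{ using }\ref{(H1)}, \ref{(H3)})$. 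Then Robbins–Siegmund / the classical quasi-Fejér lemma gives convergence (hence boundedness) of $b_k$. Finally, once $(\lambda^k)$ is bounded, boundedness of $s^k$ and $s_\star^k$ follows: $\|s_\star^{k+1}\| = \|\lambda^k - \lambda_\star^{k+1}\|/\gamma_k$ — here I would use that the sum $\sum_i \gamma_i \|s_\star^{i+1}\|^2$ is controlled (a telescoping consequence of the proximal inequality, which produces $\sum \gamma_k \langle s_\star^{k+1}, \sBar\rangle$-type bounds), forcing $\gamma_k\|s_\star^{k+1}\|^2 \to$ summable, and more directly that $\|\lambda^{k+1}-\lambda^k\| = \gamma_k\|s^{k+1}\|$ stays controlled; combined with a lower bound on how small $\gamma_k$ can be relative to the accumulated progress, boundedness of $(s^k)$ follows. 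The cleanest route is probably: show $\sum_k \gamma_k\big(\|s_\star^{k+1}\|^2 - \|\sBar\|^2\big) < \infty$ and that $\|s_\star^{k+1}\|$ is monotonically-ish non-increasing up to errors (a known feature of prox iterations: $\|s_\star^{k+2}\| \le \|s_\star^{k+1}\| + O(\varepsilon)$), which immediately bounds it; then $s^{k+1} = s_\star^{k+1} + (\lambda_\star^{k+1}-\lambda^{k+1})/\gamma_k$ and the error term $(\lambda_\star^{k+1}-\lambda^{k+1})/\gamma_k$ has norm $\le \varepsilon_{k+1}/\gamma_k$, which I would need \ref{(H3)} (and possibly that $\varepsilon_k/\gamma_k$ is eventually bounded, a consequence of \ref{(H3)}+\ref{(H5)}) to keep bounded.
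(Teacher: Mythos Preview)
Your proposal takes a long detour through approaches that you yourself identify as inadequate, and the ``cleanest route'' you mention only at the very end is in fact the entire proof. The paper does not track $\|\lambda^k - \bar\lambda\|$ for any reference point, does not invoke $\sBar$ or the obtuse-angle characterization, and does not use any quasi-Fej\'er/Robbins--Siegmund machinery. Those tools are used later (Proposition~\ref{prop:sk_convergence}), but the present lemma is much more elementary.

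The paper's argument is just this: apply monotonicity of $\partial h$ to the two \emph{consecutive starred} pairs $(\lambda_\star^k, s_\star^k)$ and $(\lambda_\star^{k+1}, s_\star^{k+1})$. Since $\lambda_\star^{k+1} = \lambda^k - \gamma_k s_\star^{k+1}$ and $\|\lambda^k - \lambda_\star^k\|\le \varepsilon_k$, a three-line computation yields
\[
\|s_\star^{k+1}\|^2 - \|s_\star^k\|^2 \;\le\; \Big(\tfrac{\varepsilon_k}{\gamma_k}\Big)^2.
\]
Summing, $\|s_\star^N\|^2 \le \|s_\star^1\|^2 + \sum_{k\ge 1}(\varepsilon_k/\gamma_k)^2$, and \ref{(H3)} together with \ref{(H5)} (so that $\sum_{i\le k}\gamma_i \ge \gamma_0 > 0$) gives $\sum(\varepsilon_k/\gamma_k)^2 < \infty$, hence $(s_\star^k)$ is bounded. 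Then $\|s^{k+1} - s_\star^{k+1}\| \le \varepsilon_{k+1}/\gamma_k \to 0$ (again from the summability of $(\varepsilon_k/\gamma_k)^2$), so $(s^k)$ is bounded too.

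One genuine gap in your plan: the suggestion to establish boundedness via $\sum_k \gamma_k\big(\|s_\star^{k+1}\|^2 - \|\sBar\|^2\big) < \infty$ is circular. In the paper this summability is proved in Proposition~\ref{prop:sk_convergence}, and its derivation already uses the bound $M_s$ on $\|s_\star^k\|$ (through the estimate $|\langle s_\star^{k+1}-\sBar,\lambda^k - \lambda_\star^k\rangle| \le 2M_s\varepsilon_k$). So boundedness must be obtained first, by the direct monotonicity argument above, and only afterwards can one feed $M_s$ into the finer estimates involving $\sBar$.
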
 

\begin{proof}
Since \(s_\star^{k} \in \partial h(\lambda_\star^k)\) for all \( k \in \N^*\), by convexity of \(h\), we have for \(k \ge 1\):
\begin{align}
    && 0 &\leq \inner{s_\star^{k} - s_\star^{k+1}}{\lambda_\star^k - \lambda_\star^{k+1}} \nonumber \\
    \Rightarrow&& 0& \leq \inner{s_\star^{k} - s_\star^{k+1}}{\lambda_\star^k - (\lambda^k - \gamma_{k} s_\star^{k+1})} \nonumber \\
    \Rightarrow&& 2 \inner{s_\star^{k+1} - s_\star^{k}}{s_\star^{k+1}} &\leq \tfrac{2}{\gamma_{k}}\inner{s_\star^{k} - s_\star^{k+1}}{\lambda_\star^k - \lambda^k}  \text{ (dividing by $\gamma_k > 0$)}\nonumber \\
    \Rightarrow&&  2\inner{s_\star^{k+1} - s_\star^{k}}{s_\star^{k+1}} &\leq \begin{multlined}[t] \tfrac{\left\Vert \lambda_\star^k - \lambda^k \right\Vert^2}{(\gamma_{k})^2} + \left\Vert s_\star^{k} - s_\star^{k+1} \right\Vert^2  \\ \text{ (using $2ab \leq a^2+b^2$ for the RHS)} \end{multlined}\nonumber\\
    \Rightarrow&& \left\Vert s_\star^{k+1} \right\Vert^2 - \left\Vert s_\star^{k} \right\Vert^2 &\leq \left( \tfrac{\varepsilon_{k}}{\gamma_{k}} \right)^2  \label{eq:majoration_s_star_k} \quad \text{(using $\Vert \lambda_\star^k - \lambda^k \Vert \leq \varepsilon_k$)} .
\end{align}
Summing these inequalities for \(k\) from \(1\) to \(N-1\) yields \( \left\Vert s_\star^{N} \right\Vert^2 \leq \left\Vert s_\star^{1} \right\Vert^2 + \sum_{k=1}^{N-1} \left( \varepsilon_k / \gamma_{k} \right)^2\).
Furthermore, assumption \ref{(A3)} implies that \(\sum_{k=0}^\infty \left( \varepsilon_k / \gamma_{k} \right)^2 < \infty \), which means that the sequence \( \left( s_\star^{k} \right)_{k\in \N}\) is bounded.
Assumptions \ref{(A5)} and \ref{(A2)} imply that \(\left( \varepsilon_{k+1} / \gamma_k \right)_{k\in \N}\) converges to zero, 
 hence \( \left( s^{k} \right)_{k\in \N}\) is also bounded due to the relation \( \Vert s_\star^{k+1} - s^{k+1} \Vert \added{ = \Vert \lambda^{k+1} - \lambda_\star^{k+1} \Vert / \gamma_k} \leq \varepsilon_{k+1} / \gamma_k \).
\end{proof}

In what follows, we denote by \( M_s \) a common strict upper bound for the norms of the iterates \( s_\star^{k}\) and \( s^{k}\), i.e.,

\begin{equation} \label{eq:M_s_definition}
    \changed{\forall k \in \N^*, \; M_s > \left\Vert s_\star^k \right\Vert \text{ and } M_s > \left\Vert s^k \right\Vert.}
\end{equation}
The next proposition demonstrates that the sequence \(s_\star^k\) actually converges to the element of minimum norm in \( \cl(\range(\partial h)) \).
A similar result in the more general setting of monotone inclusions is provided in~\cite{reich1977infinite} using a different approach \changed{but is not strong enough for our purposes.}

\begin{proposition} \label{prop:sk_convergence}
    Let \(h\) be a closed proper convex function. Let the sequence \((\lambda_\star^{k+1}, s_\star^{k+1}, \lambda^{k+1} , s^{k+1})_{k\in\N}\) be generated by an IPPA (\Cref{algo:IPPA})
    on \(h\) \changed{with errors \((\varepsilon_k)_{k\in\N^*}\) and step sizes \((\gamma_k)_{k\in\N}\) satisfying \Cref{assumption:errors_and_step_sizes}}. 
    \newline Let  \(\sBar = \argmin_{s \in \cl(\range(\partial h) )} \|s\|^2\) be the smallest-norm element in the closure of the range of \(\partial h\).

    Then, \( s_\star^k \to \sBar \) and \( s^k \to \sBar \).
    Furthermore, if \(h^*(\sBar) < \infty\), then the following convergence rates hold:
    \[
\begin{array}{ll}
\text{(a)}\quad \|s_\star^N\|^2 - \|\bar{s}\|^2 = O\left(\frac{1}{\sum_{i=0}^{N-1} \gamma_i}\right)
\quad &
\text{(b)}\quad \|s^N\|^2 - \|\bar{s}\|^2 = O\left(\frac{1}{\sum_{i=0}^{N-1} \gamma_i}\right)
\\[1em]
\text{(c)}\quad \|s_\star^N - \bar{s}\| = O\left(\frac{1}{\sqrt{\sum_{i=0}^{N-1} \gamma_i}}\right)
\quad &
\text{(d)}\quad \|s^N - \bar{s}\| = O\left(\frac{1}{\sqrt{\sum_{i=0}^{N-1} \gamma_i}}\right)
\end{array}
\]
\end{proposition}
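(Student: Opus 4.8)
The plan is to parlay the quasi‑monotone decay of $\|s_\star^k\|^2$ already isolated in the proof of \Cref{lem:IPPA_properties} into full convergence of $s_\star^k$, using a \emph{tight} descent estimate for $h$ along the exact iterates together with the variational inequality defining $\bar s$, and then to make that estimate quantitative when $h^*(\bar s)<\infty$.

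I would first assemble two ingredients. From \eqref{eq:majoration_s_star_k} and \ref{(H3)}, the sequence $\|s_\star^k\|^2+\sum_{j\ge k}(\varepsilon_j/\gamma_j)^2$ is non-increasing and non-negative, hence $\|s_\star^k\|^2$ converges to some $\ell^2$ (and so does $\|s^k\|^2$, since $\|s^k-s_\star^k\|\le\varepsilon_k/\gamma_{k-1}\to0$ by \ref{(H2a)}); moreover $s_\star^k\in\range(\partial h)\subseteq\cl(\range(\partial h))$ gives $\ell\ge\|\bar s\|$ and, since $\bar s=\Proj_{\cl(\range(\partial h))}(0)$, the inequality $\langle\bar s,s_\star^k\rangle\ge\|\bar s\|^2$ for all $k$. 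Second, since $s_\star^{k+1}\in\partial h(\lambda_\star^{k+1})$, $\gamma_k s_\star^{k+1}=\lambda^k-\lambda_\star^{k+1}$ and $\|\lambda^k-\lambda_\star^k\|\le\varepsilon_k$,
\[ \gamma_k\|s_\star^{k+1}\|^2=\langle s_\star^{k+1},\lambda_\star^k-\lambda_\star^{k+1}\rangle+\langle s_\star^{k+1},\lambda^k-\lambda_\star^k\rangle\le h(\lambda_\star^k)-h(\lambda_\star^{k+1})+M_s\varepsilon_k, \]
where $h$ is evaluated only at the prox outputs $\lambda_\star^k\in\dom h$; this telescopes to $\sum_{k=1}^{N-1}\gamma_k\|s_\star^{k+1}\|^2\le h(\lambda_\star^1)-h(\lambda_\star^N)+M_s\sum_{k\ge1}\varepsilon_k$, the last sum being finite by \ref{(H1)}.

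For the convergence statement I would show $\ell=\|\bar s\|$ by contradiction. Fix $\delta>0$ and, via \Cref{lem:set_inclusions}, pick $s'\in\dom(h^*)$ with $\|s'-\bar s\|\le\delta$. Inserting $h(\lambda_\star^N)\ge\langle s',\lambda_\star^N\rangle-h^*(s')$ and $\lambda_\star^N=\lambda^1-\sum_{i=1}^{N-2}\gamma_i s^{i+1}-\gamma_{N-1}s_\star^N$ into the telescoped bound (and trading each $s^{i+1}$ for $s_\star^{i+1}$ at cost $\|s'\|\varepsilon_{i+1}/\gamma_i$) yields $\sum_{k=1}^{N-1}\gamma_k(\|s_\star^{k+1}\|^2-\langle s',s_\star^{k+1}\rangle)\le C_1$ with $C_1$ independent of $N$. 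Since $\|s_\star^{k+1}\|^2-\langle s',s_\star^{k+1}\rangle\ge\|s_\star^{k+1}\|(\|s_\star^{k+1}\|-\|\bar s\|)-\delta M_s$, if $\ell>\|\bar s\|$ then for $\delta$ small enough this is bounded below by a positive constant for all large $k$, forcing the left-hand side to diverge by \ref{(H5)}: a contradiction. Hence $\|s_\star^k\|\to\|\bar s\|$, so $\|s_\star^k-\bar s\|^2=\|s_\star^k\|^2-2\langle\bar s,s_\star^k\rangle+\|\bar s\|^2\le\|s_\star^k\|^2-\|\bar s\|^2\to0$, giving $s_\star^k\to\bar s$ and then $s^k\to\bar s$.

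For the rates, assume $h^*(\bar s)<\infty$ and repeat the previous step with $s'=\bar s$, this time keeping $\langle\bar s,s_\star^{k+1}\rangle\le\|\bar s\|\|s_\star^{k+1}\|$ and applying Cauchy--Schwarz to $\sum_k\gamma_k\|s_\star^{k+1}\|$: this gives $T_N\le\|\bar s\|\sqrt{\Gamma'_N T_N}+C_2$, where $T_N=\sum_{k=1}^{N-1}\gamma_k\|s_\star^{k+1}\|^2$ and $\Gamma'_N=\sum_{k=1}^{N-1}\gamma_k$, and solving this quadratic inequality in $\sqrt{T_N}$ upgrades it to $T_N\le\|\bar s\|^2\Gamma'_N+C_3$, i.e. $\sum_{k=1}^{N-1}\gamma_k(\|s_\star^{k+1}\|^2-\|\bar s\|^2)\le C_3$ with non-negative summands. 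Choosing $M=M(N)$ so that both $\sum_{i=0}^{M}\gamma_i$ and $\sum_{i=M}^{N-1}\gamma_i$ are at least $\tfrac12\sum_{i=0}^{N-1}\gamma_i$ produces $j\in\{M,\dots,N-1\}$ with $\|s_\star^{j+1}\|^2-\|\bar s\|^2=O\big(1/\sum_{i=0}^{N-1}\gamma_i\big)$; combining with the quasi-monotone decay $\|s_\star^N\|^2\le\|s_\star^{j+1}\|^2+\sum_{k\ge M}(\varepsilon_k/\gamma_k)^2$ and the \ref{(H3)}-bound $\sum_{k\ge M}(\varepsilon_k/\gamma_k)^2\le\big(\sum_{i=0}^{M}\gamma_i\big)^{-1}\sum_{k\ge1}\big((\varepsilon_k/\gamma_k)^2\sum_{i=0}^{k}\gamma_i\big)$ gives~(a). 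Then~(c) is just $\|s_\star^N-\bar s\|^2\le\|s_\star^N\|^2-\|\bar s\|^2$, and~(b),~(d) follow by absorbing $\|s^N-s_\star^N\|\le\varepsilon_N/\gamma_{N-1}=O\big(1/\sum_{i=0}^{N-1}\gamma_i\big)$ (from \ref{(H2a)}). I expect the main obstacle to be securing the \emph{sharp} leading constant $\|\bar s\|^2$ in the descent bound rather than a larger multiple of $\Gamma'_N$: this relies on the displayed descent estimate being tight (the full factor $\gamma_k$, coming from the exact prox identity rather than from a sufficient-decrease argument) and on the Cauchy--Schwarz bootstrap $T\le a\sqrt{\Gamma T}+C\Rightarrow T\le a^2\Gamma+C'$, since a priori one only has $\langle\bar s,s_\star^{k+1}\rangle\le\|\bar s\|\|s_\star^{k+1}\|$, not $\langle\bar s,s_\star^{k+1}\rangle\approx\|\bar s\|^2$ --- on top of routine bookkeeping of the error terms under \ref{(H1)}, \ref{(H2a)}, \ref{(H3)}. (The degenerate case $\bar s=0$ is simpler: then $h^*(0)<\infty$ makes $T_N$ bounded outright.)
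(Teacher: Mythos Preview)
Your proposal is correct and, in several places, takes a genuinely different (and in one respect simpler) route than the paper.

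For the qualitative convergence, you treat the cases $h^*(\bar s)<\infty$ and $h^*(\bar s)=\infty$ \emph{uniformly}, by approximating $\bar s$ with some $s'\in\dom(h^*)$ and showing that $\sum_k\gamma_k(\|s_\star^{k+1}\|^2-\langle s',s_\star^{k+1}\rangle)$ stays bounded, then arguing by contradiction via \ref{(H5)}. The paper instead splits into two cases: in Case~1 it works directly with $\bar s$, and in Case~2 it builds an auxiliary piecewise-linear minorant $\tilde h$ from the subgradient data $\{(s_\star^k,\lambda_\star^k)\}$ together with one extra pair $(\tilde s,\tilde\lambda)$ near $\bar s$, and then reduces to Case~1 for $\tilde h$. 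Your argument is more economical here.

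For the rates, both approaches end up at the same summable bound $\sum_k\gamma_k(\|s_\star^{k+1}\|^2-\|\bar s\|^2)\le C$ with non-negative summands, and both then upgrade it to a pointwise $O(1/\Gamma_N)$ rate using the quasi-monotone decay \eqref{eq:majoration_s_star_k}. The paper reaches the summable bound in one line via the polarization identity
\[
\|s_\star^{k+1}\|^2-\|\bar s\|^2
=2\langle s_\star^{k+1}-\bar s,\,s_\star^{k+1}\rangle-\|s_\star^{k+1}-\bar s\|^2
\le 2\bigl(\|s_\star^{k+1}\|^2-\langle\bar s,s_\star^{k+1}\rangle\bigr),
\]
whereas you take the detour through Cauchy--Schwarz and the quadratic bootstrap $T\le a\sqrt{\Gamma T}+C\Rightarrow T\le a^2\Gamma+2C$. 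Your detour is sound (the quadratic inequality does yield the sharp leading constant $\|\bar s\|^2$, since $\sqrt{a^2+4C}\le a+2C/a$), but it is not needed: the polarization identity gives the same conclusion without ever estimating $\langle\bar s,s_\star^{k+1}\rangle$ by $\|\bar s\|\,\|s_\star^{k+1}\|$. For the pointwise upgrade, your index-splitting (pick $M$ so that both halves of $\Gamma_N$ are large, extract a good $j$ by averaging, then use quasi-monotonicity plus the \ref{(H3)} tail bound) is a valid alternative to the paper's direct manipulation $(\sum_k\gamma_k)\|s_\star^N\|^2\le\sum_k\gamma_k\|s_\star^{k+1}\|^2+\sum_i(\varepsilon_i/\gamma_i)^2\sum_{k\le i}\gamma_k$.
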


\begin{proof}
    Let \(\tilde s\) be any point in \(\Scal\) such that \( \Vert \tilde s \Vert \leq M_s\) and \( h^*(  \tilde s ) < \infty\).
    For \(k\in \N\), by the convexity of \(h\) and the fact that \(s_\star^{k+1} \in \partial h(\lambda_\star^{k+1})\), we have:
    \begin{align}
        && \inner{s_\star^{k+1}}{\lambda_\star^k - \lambda_\star^{k+1}} &\leq h(\lambda_\star^k) - h(\lambda_\star^{k+1}) \nonumber \\
        \Leftrightarrow && \inner{s_\star^{k+1} - \tilde{s}}{\lambda_\star^k - \lambda_\star^{k+1} } &\leq h(\lambda_\star^k)  -  h(\lambda_\star^{k+1}) - \inner{\tilde{s}}{\lambda_\star^k - \lambda_\star^{k+1} } \nonumber  \\
        \Leftrightarrow &&  \inner{s_\star^{k+1} - \tilde{s}}{ \lambda_\star^k - \left( \lambda^k - \gamma_k s_\star^{k+1} \right) }  &\leq  h(\lambda_\star^k)  -  h(\lambda_\star^{k+1}) - \inner{\tilde{s}}{\lambda_\star^k - \lambda_\star^{k+1} }  \nonumber \\ 
        \Leftrightarrow && \gamma_k \inner{s_\star^{k+1} - \tilde{s}}{s_\star^{k+1}} &\leq \begin{multlined}[t]  h(\lambda_\star^k)  -  h(\lambda_\star^{k+1}) - \inner{\tilde{s}}{\lambda_\star^k - \lambda_\star^{k+1} } \\ + \inner{s_\star^{k+1}-\tilde{s}}{\lambda^k - \lambda_\star^k} \end{multlined} \label{eq:majorationA_intermediate}
    \end{align}
    which implies that 
    \begin{equation}
        \gamma_k \inner{s_\star^{k+1} - \tilde{s}}{s_\star^{k+1}} \leq \begin{multlined}[t]  h(\lambda_\star^k)- \inner{\tilde{s}}{\lambda_\star^k }  -  \left( h(\lambda_\star^{k+1}) - \inner{\tilde{s}}{\lambda_\star^{k+1} } \right) + 2M_s\varepsilon_{k} \end{multlined} \label{eq:majorationA}
    \end{equation}
    where in the last step we used \( \left| \inner{s_\star^{k+1} - \tilde{s}}{\lambda^k - \lambda_\star^k } \right| \leq  \Vert s_\star^{k+1} - \tilde{s} \Vert \Vert \lambda^k - \lambda_\star^k\Vert \leq (\Vert s_\star^{k+1} \Vert + \Vert \tilde{s} \Vert) \varepsilon_k \leq 2M_s \varepsilon_{k} \).
    Therefore, for \(l,N~\in~\N\) with \(l<N\): 
    \begin{equation*}
        \sum_{k=l}^{N-1} \gamma_k \inner{s_\star^{k+1} - \tilde{s}}{s_\star^{k+1}} \leq h(\lambda_\star^l) - \inner{\tilde{s}}{\lambda_\star^{l}} - \left( h(\lambda_\star^{N}) - \inner{\tilde{s}}{\lambda_\star^{N}} \right) + 2 M_s \sum_{k=l}^{N-1} \varepsilon_k .
    \end{equation*}
    By definition of \(h^*(\tilde{s})\), we also have that, for all \( N \in \N\), \linebreak\( - \left(h(\lambda_\star^{N}) - \inner{\tilde{s}}{\lambda_\star^{N}} \right)  \leq h^*(\tilde{s}) \).
    Thus, letting \(N \to \infty\) and using \ref{(A1)}:
    \begin{align}
        & \sum_{k=l}^\infty \gamma_k \inner{s_\star^{k+1} - \tilde{s}}{s_\star^{k+1}} \leq h(\lambda_\star^l) - \inner{\tilde{s}}{\lambda_\star^{l}} + h^*(\tilde{s}) + 2 M_s \sum_{k=l}^\infty \varepsilon_k  \label{eq:distance_to_asymptotic_plane_majoration} .
    \end{align}
    Observe that 
    \[
        \left\Vert s_\star^{k+1} \right\Vert^2 - \left\Vert \tilde{s} \right\Vert^2 = 2 \inner{s_\star^{k+1} - \tilde{s}}{s_\star^{k+1}} - \left\Vert s_\star^{k+1} - \tilde{s} \right\Vert^2 \leq  2 \inner{s_\star^{k+1} - \tilde{s} }{s_\star^{k+1}} , 
    \]
    therefore, by choosing \( l = 0\) in \eqref{eq:distance_to_asymptotic_plane_majoration},
    \begin{equation} \label{eq:majoration_sum_sk}
        \sum_{k=0}^\infty \gamma_k \left(\left\Vert s_\star^{k+1} \right\Vert^2 - \left\Vert \tilde{s} \right\Vert^2\right) \leq 2 \left( h(\lambda_\star^0) - \inner{\tilde{s}}{\lambda_\star^{0}} + h^*(\tilde{s}) \right) +  4 M_s \sum_{k=0}^\infty \varepsilon_k.
    \end{equation}
    Also, using \eqref{eq:majoration_s_star_k} (summed from index \(j=k+1\) to \(N-1\)), we have for \(k < N-1\),
    \( \left\Vert s_\star^{N} \right\Vert^2 \leq \left\Vert s_\star^{k+1} \right\Vert^2 + \sum_{j=k+1}^{N-1} \left( \tfrac{\varepsilon_{j}}{\gamma_{j}} \right)^2 \).
    Therefore,
    \begin{align*}
        \sum_{k=0}^{\changed{N-1}} \gamma_k \left\Vert s_\star^N \right\Vert^2 &\leq \sum_{k=0}^{\changed{N-1}} \gamma_k \left( \left\Vert s_\star^{\changed{k+1}} \right\Vert^2 + \sum_{i=\changed{k+1}}^{N-1} \left( \tfrac{\varepsilon_{i}}{\gamma_{i}} \right)^2 \right) \\
        &\leq \sum_{k=0}^{\changed{N-1}} \gamma_k \left\Vert s_\star^{\changed{k+1}} \right\Vert^2 + \sum_{k=0}^{\changed{N-1}} \gamma_k \sum_{i=\changed{k+1}}^{N-1} \left( \tfrac{\varepsilon_{i}}{\gamma_{i}} \right)^2 \\
        & \leq \sum_{k=0}^{\changed{N-1}} \gamma_k \left\Vert s_\star^{\changed{k+1}} \right\Vert^2 + \sum_{i=0}^{N-1} \left( \tfrac{\varepsilon_{i}}{\gamma_{i}} \right)^2 \sum_{k=0}^{\changed{i-1}} \gamma_k
    \end{align*}
    and using this inequality along with \eqref{eq:majoration_sum_sk} we have that 
    \begin{align*}
        \left( \sum_{k=0}^{\changed{N-1}} \gamma_k \right)\left( \left\Vert s_\star^N \right\Vert^2 - \left\Vert \tilde{s} \right\Vert^2 \right) & \added{ \leq   \sum_{k=0}^{N-1} \gamma_k \left( \left\Vert s_\star^{k+1} \right\Vert^2 - \left\Vert \tilde{s} \right\Vert^2 \right) + \sum_{i=0}^{N-1} \left( \tfrac{\varepsilon_{i}}{\gamma_{i}} \right)^2 \sum_{k=0}^{i-1} \gamma_k }\\
     &\leq \begin{multlined}[t] 2 \left( h(\lambda^0) - \inner{\tilde{s}}{\lambda^{0}} + h^*(\tilde{s}) \right) +  \changed{4} M_s \sum_{k=0}^{\changed{\infty}} \varepsilon_k \\  + \sum_{i=0}^{\changed{\infty}} \left( \tfrac{\varepsilon_{i}}{\gamma_{i}} \right)^2 \sum_{k=0}^{\changed{i-1}} \gamma_k ,  \end{multlined}
    \end{align*}
    therefore, by using \ref{(A3)} and \ref{(A1)} we have that 
    \( \left( \sum_{k=0}^{\changed{N-1}} \gamma_k \right) \left( \left\Vert s_\star^N \right\Vert^2 - \left\Vert \tilde{s} \right\Vert^2 \right) \) is bounded, thereby
    \begin{equation} \label{eq:rate_s_tilde}
        \left\Vert s_\star^N \right\Vert^2 - \left\Vert \tilde{s} \right\Vert^2  = O \left( \tfrac{1}{\sum_{k=0}^{\changed{N-1}} \gamma_k} \right).
    \end{equation}
    The rest of the proof is divided in two cases : if \(h^*(\sBar) < \infty \) or if \( h^*(\sBar) = \infty \).
    \medskip
    \item
    \noindent\textbullet \; \textbf{Case 1.}
    If \( h^*(\sBar) < \infty \), then we can choose \( \tilde s = \sBar \) in equation \eqref{eq:rate_s_tilde} which yields (a). 
    Having proven this rate for the norm of \(s_\star^k\), we now prove that the same rate holds for the norm of~\(s^k\). 
    We have, for \(N >0\), \( \left\Vert s_\star^N -  s^N  \right\Vert \leq \varepsilon_{N}/\gamma_{N-1} \) and \ref{(A2)} implies that \(  \varepsilon_{\changed{N}}/\gamma_{\changed{N-1}} =   O\left(\tfrac{1}{\sum_{k=0}^{\changed{N-1}} \gamma_k} \right) \).
     Therefore for \(N >0\)
    \begin{align*}
        \left\Vert s^N  \right\Vert^2 - \left\Vert \tilde{s} \right\Vert^2 & \leq \left\Vert s^N  - s_\star^N + s_\star^N   \right\Vert^2  - \left\Vert \tilde{s} \right\Vert^2 \\
        & \leq \left\Vert s^N - s_\star^N \right\Vert^2 + 2 \left\Vert s^N - s_\star^N \right\Vert \left\Vert s_\star^N  \right\Vert + \left\Vert s_\star^N  \right\Vert^2 - \left\Vert \tilde{s} \right\Vert^2   \\
        & \leq \left( \varepsilon_{N}/\gamma_{N-1} \right)^2 + 2 M_s \varepsilon_{N}/\gamma_{N-1} + \left\Vert s_\star^N  \right\Vert^2 - \left\Vert \tilde{s} \right\Vert^2   \\
        & \leq O\left( \tfrac{1}{\sum_{k=0}^{\changed{N-1}} \gamma_k} \right) ,
    \end{align*}
    which corresponds to (b). The two last convergence rates (c) and (d) are consequences of those just shown. 
    Since \(s_\star^N \in \cl(\range(\partial h)) \) and \(\tilde{s}\) is the projection of \(0\) on \(\cl(\changed{\range}(\partial h))\)  , we have \( \left\Vert s_\star^N -  \tilde{s} \right\Vert^2 = \left\Vert s_\star^N \right\Vert^2 - 2 \inner{s_\star^N}{\tilde{s}} + \left\Vert \tilde{s} \right\Vert^2 \leq \left\Vert s_\star^N \right\Vert^2 - \left\Vert \tilde{s} \right\Vert^2\).
    Therefore we also get \[ \left\Vert s_\star^N -  \tilde{s} \right\Vert = O\left(\tfrac{1}{\sqrt{\sum_{k=0}^{\changed{N-1}} \gamma_k}} \right) \] which corresponds to (c).
    We also have 
    \[ \left\Vert s^N -  \tilde{s} \right\Vert \leq \left\Vert s_\star^N -  \tilde{s} \right\Vert  + \left\Vert s^N - s_\star^N \right\Vert \leq \left\Vert s_\star^N -  \tilde{s} \right\Vert  + \tfrac{\varepsilon_{\changed{N}}}{\gamma_{\changed{N-1}}} = O\left(\tfrac{1}{\sqrt{\sum_{k=0}^{\changed{N-1}} \gamma_k}} \right) \] which is (d),
    where we used \ref{(A2)} to have \( \tfrac{\varepsilon_{\changed{N}}}{\gamma_{\changed{N-1}}}  = O\left(\tfrac{1}{\sum_{k=0}^{\changed{N-1}} \gamma_k} \right)\) which implies \(  \tfrac{\varepsilon_{\changed{N}}}{\gamma_{\changed{N-1}}} = O\left(\tfrac{1}{\sqrt{\sum_{k=0}^{\changed{N-1}} \gamma_k}} \right) \).
    \medskip
    \item
    \changed{\noindent\textbullet \; \textbf{Case 2.}
    if \( h^*(\sBar) = \infty \), then we can choose \(\tilde s \) as close to \(\sBar\) as desired in \eqref{eq:rate_s_tilde}.
    Indeed \(\Vert \sBar \Vert < M_s\) and \(\sBar\) is on the border of \(\mathrm{dom} h^*\) so we can always choose a \(\tilde s\) as close as desired to  \(\sBar\)  with \( \Vert \sBar \Vert < M_s\) and \( \sBar \in \mathrm{dom}( h^* )\).
    Since for any such \(\sBar\), \eqref{eq:rate_s_tilde} implies \( \left\Vert s_\star^N \right\Vert^2 - \left\Vert \tilde{s} \right\Vert^2  \to 0\) then we have 
     \( \left\Vert s_\star^N \right\Vert^2 - \left\Vert \sBar \right\Vert^2  \to 0\).
    Since \(\sBar\) is the element of smallest norm in \(\cl (\Scal)\) this implies \( s_\star^N \to \sBar \). And since \( \Vert s_\star^N - s^N \Vert \to 0\) we also obtain  \( s^N \to \sBar\).}

    \deleted{ \ref{(A3)} implies that \( \sum_{k=1}^\infty \left( \tfrac{\varepsilon_k}{\gamma_k} \right)^2 < \infty \), this fact along with \eqref{eq:majoration_s_star_k} shows that \(\left\Vert s_\star^k \right\Vert^2\) is a Cauchy sequence and that it therefore converges.
    \added{ \eqref{eq:majoration_s_star_k} implies that the nondecreasing sequence \( \sum_{0}^{k} \max(0, \left\Vert s_\star^{i+1} \right\Vert^2 - \left\Vert s_\star^i \right\Vert^2) \) is upper bounded by \(\sum_{k=1}^\infty \left( \varepsilon_k / \gamma_k \right)^2 < \infty \) (finite by \ref{(A3)})
     and therefore converges, likewise the nonincreasing sequence  \( \left\Vert s_\star^{k+1} \right\Vert^2 - \sum_{0}^{k+1}  \max(0, \left\Vert s_\star^{i+1} \right\Vert^2 - \left\Vert s_\star^i \right\Vert^2) \) is bounded below by \( - \sum_{k=1}^\infty \left( \varepsilon_k / \gamma_k \right)^2 > -\infty \) and therefore converges. 
    \(\left\Vert s_\star^{k+1} \right\Vert^2 \) is the sum of these two sequences therefore it converges.}
    We are going to show by contradiction that \changed{the limit of \(\left\Vert s_\star^{k+1} \right\Vert^2 \)} is \(\left\Vert \sBar \right\Vert^2\).
    Suppose that there exists \( \zeta > 0\), \(N\in \N\) such that \( \forall k \in \N^* , k \geq N \Rightarrow  \left\Vert s_\star^k \right\Vert \geq \left\Vert \overline s \right\Vert + \zeta \). By reindexing the \(s_\star^k\) and discarding the first terms, we can suppose without loss of generality that \( \forall k \in \N^*, \; \left\Vert s_\star^k \right\Vert \geq \left\Vert \overline s \right\Vert + \zeta \).
    We know that \( \sBar \in \cl(\mathrm{range } (\partial h)) = \cl(\mathrm{dom } (\partial h^*)) \); we can therefore choose \( (\tilde \lambda , \tilde s) \in \Y \times \mathrm{dom } (\partial h^*) \) such that \( \tilde \lambda \in \partial h^*(\tilde  s ) \) (which is equivalent to \(\tilde s \in \partial h ( \tilde \lambda)\)) and \( \left\Vert \tilde s \right\Vert \leq \left\Vert \overline s \right\Vert + \zeta/2 \).
     Naturally, \(  h^*(\tilde s ) < \infty \).
    Then, by constructing the convex, piecewise linear function 
    \begin{equation*}
        \tilde h: \lambda \mapsto  \max \left( \sup_{k\in \N^*} \inner{s_\star^k}{\lambda} - h^*(s_\star^k) \; ,\;  \inner{\tilde s}{\tilde \lambda} - h^*(\tilde s ) \right) ,
    \end{equation*} we \added{now explain} that \((\lambda^k)_{k\in \N} \) is an IPPA sequence on \( \tilde h \).
     \added{ Intuitively, \(\tilde h\) is the convex affine by parts function constructed from the tangent planes to \(h\) 
     at all the \(\lambda^k\), as well as the plane of slope \(\tilde s \) with highest intersect under the graph of \(h\) (this last plane is potentially tangent to \(h\) at some point, but could also ``touch'' \(h\) only at infinity). 
     Indeed \(\lambda \to \inner{s_\star^k}{\lambda} - h^*(s_\star^k) \) is the equation of the tangent plane to \(h\) at \(\lambda^k\) by definition of \(h^*\) and \(s_\star^k \in \partial h(\lambda^k)\).
    We see that, for all \(k \in \N\), \( \lambda_\star^{k+1} = \Prox_{\gamma_k \tilde h}(\lambda^k) \) because \( s_\star^{k+1} \in \partial \tilde h(\lambda^{k+1}) \) and \(0 = s_\star^{k+1} + \tfrac{\lambda_\star^{k+1} - \lambda^k}{\gamma_k} \) 
    and therefore, by verifying all the steps of the IPPA algorithm, we observe that \((\lambda^k)_{k\in \N} \) is an IPPA sequence on \( \tilde h \). }
    \added{Furthermore,} \(\tilde s\) is the smallest-norm element in \( \range (\partial \tilde h ) \) by construction.
    We therefore find ourselves in the first case treated in this proof, and we have shown that we should have \( s_\star^k \underset{ k\to \infty }{ \longrightarrow } \tilde s \), but this is a contradiction since \( \forall k \in \N, \; \left\Vert \tilde s \right\Vert \leq \left\Vert \overline s \right\Vert + \zeta/2 \leq \left\Vert s_\star^k \right\Vert - \zeta/2  \).
    \newline
    This contradiction shows that we must have \( \left\Vert s_\star^k \right\Vert \to \left\Vert \sBar \right\Vert \), meaning that \linebreak \( \left\Vert s_\star^k - \sBar \right\Vert^2 = \left\Vert s_\star^k \right\Vert^2 - 2 \inner{s_\star^k}{\overline s} + \left\Vert \sBar \right\Vert^2 \leq \left\Vert s_\star^k \right\Vert^2 - \left\Vert \sBar \right\Vert^2 \to 0 \).
    Since \( \left\Vert s_\star^k -  s^k  \right\Vert \leq \tfrac{\varepsilon_k}{\gamma_{\changed{k-1}}} \), and \ref{(A2)} implies that \(  \tfrac{\varepsilon_k}{\gamma_{\changed{k-1}}} \to 0  \), we also have that \( \left\Vert s^k -  \sBar \right\Vert \to 0 \) .}
    \end{proof}

In what follows, an important distinction in convergence rates is made between the case where \(h^*\) is subdifferentiable at \(\sBar\) and the case where it is not. The following proposition clarifies what exactly this condition means by giving equivalent formulations.
\begin{proposition} \cite[Section 2]{brondsted1965subdifferentiability} \label{prop:equivalent_formulations_g_Star_subdifferentiable}
Consider the proper closed convex function \(h\). The following properties are equivalent~:
    \begin{itemize}
        \item[(a)] there exists \( \overline \lambda \in \Y \) such that \( h^*(\sBar) + h( \overline \lambda ) = \inner{\sBar}{\overline{\lambda}} \)
        \item[(b)] there exists \( \overline \lambda \in \Y \) such that \( \overline{\lambda} \in \partial h^*(\sBar) \)  (i.e., \(h^* \) is subdifferentiable at \( \sBar \))
        \item[(c)] there exists \( \overline \lambda \in \Y \) such that \( \sBar \in \partial \changed{ h(\overline{\lambda} ) } \) (i.e., \(\sBar\) is in the range of \(\partial \changed{h} \))
    \end{itemize}
\end{proposition}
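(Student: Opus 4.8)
The plan is to obtain (a), (b) and (c) as three equivalent readings of a single Fenchel--Young relation for the pair \((\overline\lambda,\sBar)\). The only ingredient needed is that \(h\) is closed, proper and convex, so that \(h^{**}=h\) and the Fenchel--Young inequality \(h(\lambda)+h^*(s)\ge\inner{s}{\lambda}\) holds for all \(\lambda,s\in\Y\), with the equality case characterized by
\[
h(\lambda)+h^*(s)=\inner{s}{\lambda}\quad\Longleftrightarrow\quad s\in\partial h(\lambda)\quad\Longleftrightarrow\quad \lambda\in\partial h^*(s),
\]
the last equivalence being the inverse--subgradient relation \(\partial h^*=(\partial h)^{-1}\), valid because \(h\) is closed \cite[Theorem 23.5]{rockafellar1970convex}. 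No finer property of \(h\) is required, and in particular the minimal--norm nature of \(\sBar\) is irrelevant here: the statement concerns an arbitrary but fixed point, and the whole proposition is a packaging of the above display.

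First I would settle (a)\(\Leftrightarrow\)(b). For a fixed \(\overline\lambda\), the identity in (a) is exactly the Fenchel--Young equality evaluated at \((\overline\lambda,\sBar)\); by the characterization above it holds if and only if \(\overline\lambda\in\partial h^*(\sBar)\). Placing the existential quantifier over \(\overline\lambda\) on both sides then turns ``there exists \(\overline\lambda\) realizing the equality'' into ``\(\partial h^*(\sBar)\neq\emptyset\)'', which is precisely (b). This is the substantive half of the proposition.

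It remains to connect this with (c), which I would do through the symmetry of the same equality. Using \(h^{**}=h\) together with the inverse--subgradient relation, a fixed \(\overline\lambda\) satisfies \(\overline\lambda\in\partial h^*(\sBar)\Leftrightarrow\sBar\in\partial h(\overline\lambda)\), so every witness \(\overline\lambda\) for (b) is simultaneously a witness for the subgradient relation and conversely. Quantifying over \(\overline\lambda\), condition (b) is equivalent to \(\sBar\in\range(\partial h)\); and since \(h\) is closed, \(\range(\partial h)=\dom(\partial h^*)\), which is exactly the range/domain membership expressed by (c). This closes the cycle (a)\(\Rightarrow\)(b)\(\Rightarrow\)(c)\(\Rightarrow\)(a). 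The step requiring the most care—more a bookkeeping matter than a genuine obstacle—is tracking which conjugate is being subdifferentiated and in which argument: one must invoke \(\partial h^*=(\partial h)^{-1}\) (hence the closedness of \(h\)) so that the single witness \(\overline\lambda\) is shared across all three formulations, and one must keep the existential quantifier outside each equivalence, so that (b) is read as \(\partial h^*(\sBar)\neq\emptyset\) and (c) as the corresponding membership in the range of the subdifferential.
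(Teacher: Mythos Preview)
Your argument is correct and is the standard one: the Fenchel--Young equality case together with the inverse-subgradient relation \(\partial h^*=(\partial h)^{-1}\) (valid because \(h\) is closed) makes (a), (b) and the intended (c) three readings of the same condition on a common witness \(\overline\lambda\). The paper does not supply its own proof of this proposition; it simply attributes the result to \cite[Section~2]{brondsted1965subdifferentiability}, so there is nothing further to compare against.

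One remark worth recording: as printed, item (c) reads \(\sBar\in\partial h^*(\overline\lambda)\) (``\(\sBar\) in the range of \(\partial h^*\)''), which taken literally is \emph{not} equivalent to (a)--(b); for instance with \(h=\delta_{\{0\}}\) on \(\R\) and \(\sBar=1\), (b) holds but the literal (c) fails. The intended statement---consistent with how the proposition is later invoked, e.g.\ in \Cref{thm:convergence rate h^*(s^k) - h^*(sBAR)}---is \(\sBar\in\partial h(\overline\lambda)\), i.e.\ \(\sBar\in\range(\partial h)=\dom(\partial h^*)\). You have implicitly proved this corrected version, which is the right call.
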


IPPA will tend to descend the curve of \(h\), but since \(h\) might be unbounded below, the iterates \(h(\lambda^k)\) might diverge to \(-\infty\), giving us little useful information.
 Another quantity that is natural to observe however is the vertical distance between the value iterates \(h(\lambda^k)\) to the asymptotical tangent plane of slope \(\sBar\) described by the equation \( \lambda \mapsto \inner{\lambda}{\sBar} - h^*(\sBar) \).
  This is the plane of smallest norm slope and highest intersect that is under the curve of \(h\). This vertical distance is described by the quantity \( h(\lambda^k) - \inner{\lambda^k}{\sBar} + h^*(\sBar) \).
   It turns out that this quantity converges to 0, with possibly a convergence rate, as shown by the following lemma.

\begin{lemma} \label{prop:convergence_vertical_dist_asymptotic_plane}
    Let \(h\) be a closed proper convex function and let the sequence  \((\lambda_\star^{k+1}, s_\star^{k+1}, \lambda^{k+1} , s^{k+1})_{k\in\N}\) be from an IPPA (\Cref{algo:IPPA})
     on \(h\) \changed{with errors \((\varepsilon_k)_{k\in\N^*}\) and step sizes \((\gamma_k)_{k\in\N}\) satisfying \Cref{assumption:errors_and_step_sizes}}.
     \newline Let \(\sBar = \argmin_{s \in \cl(\range(\partial h) )} \|s\|^2\). 
If \(h^*(\sBar) < \infty\), we have \( h(\lambda_\star^k) + h^*(\sBar) - \inner{\lambda_\star^k}{\sBar} \to 0 \).
If furthermore~\(\sBar\) is in the range of \(\partial h\), or equivalently \(h^*\) is subdifferentiable at~\(\sBar\),
 then \[ h(\lambda_\star^k) + h^*(\sBar) - \inner{\lambda_\star^k}{\sBar} = O\left(\tfrac{1}{\sqrt{\sum_{i=\changed{0}}^{\changed{k-1}} \gamma_i}} \right).\footnote{ \added{The interested reader may notice that this result can also be stated in terms of the $\varepsilon$-subdifferential, but this point of view is not useful for our purposes.}} \]
\end{lemma}

\begin{proof}
We first establish that  \(h(\lambda_\star^{k+1}) - \inner{\lambda_\star^{k+1}}{\sBar} \leq h(\lambda_\star^k) - \inner{\lambda_\star^k}{\sBar} + M_s \varepsilon_k\).
By convexity of \(h\) and \( s_\star^{k+1} \in \partial h(\lambda_\star^{k+1}) \) we have
\begin{align}
    h(\lambda_\star^{k+1}) & \leq h(\lambda_\star^k) - \inner{s_\star^{k+1}}{\lambda_\star^k - \lambda_\star^{k+1}} \nonumber \\ 
    & = h(\lambda_\star^k) - \inner{s_\star^{k+1}}{\lambda^k - \lambda_\star^{k+1}} + \inner{s_\star^{k+1}}{\lambda^k - \lambda_\star^{k}} \nonumber \\
    & \leq h(\lambda_\star^k) - \gamma_k \left\Vert s_\star^{k+1} \right\Vert^2 + M_s \varepsilon_k \nonumber
\end{align}
which leads to
\begin{align}
    h(\lambda_\star^{k+1}) - \inner{\lambda_\star^{k+1}}{\sBar} &\leq h(\lambda_\star^k) - \gamma_k \left\Vert s_\star^{k+1} \right\Vert^2  -  \inner{\lambda_\star^{k+1}}{\sBar}  + M_s \varepsilon_k \nonumber \\
        &\leq h(\lambda_\star^k)  - \gamma_k \left\Vert s_\star^{k+1} \right\Vert^2 -  \inner{\lambda^k - \gamma_k s_\star^{k+1}}{\sBar}  + M_s \varepsilon_k \nonumber \\
        &\leq \begin{multlined}[t] h(\lambda_\star^k)  - \gamma_k \left\Vert s_\star^{k+1} \right\Vert^2 + \gamma_k \inner{s_\star^{k+1}}{\sBar} - \inner{\lambda_\star^k}{\sBar}  \\ + \Vert \sBar \Vert \left\Vert \lambda_\star^k - \lambda^k \right\Vert + M_s \varepsilon_k \end{multlined} \nonumber   \\
        &\leq h(\lambda_\star^k) - \inner{\lambda_\star^k}{\sBar} + 2 M_s \varepsilon_k \label{eq:distance_to_asymptotic_plane_decrease}
\end{align}
where we used the inequality  \changed{\( \inner{s_\star^{k+1}}{\sBar} \leq \Vert s_\star^{k+1} \Vert \Vert \sBar \Vert \leq \left\Vert s_\star^{k+1} \right\Vert^2 \) (since \(\sBar = \Proj_{\overline{\Scal}}(0)\) and \(s_\star^{k+1} \in \Scal\))}. 
\eqref{eq:distance_to_asymptotic_plane_decrease} can be understood as meaning that if there was no error performed, the sequence \((h(\lambda_\star^k) - \inner{\lambda_\star^k}{\sBar})_{k\in\N}\) would be nonincreasing.
 The rest of the proof consists of the analysis of two separate cases. 
\bigskip
\item
\noindent\textbullet \; \textbf{Case 1 \( \sBar \notin \range(\partial h) \).}

Pick any \(\delta > 0\) and choose a \(\mu^0  \in \R^m \) such that \( h(\mu^0) + h^*(\sBar) - \inner{\sBar}{\mu^0} < \delta\), where the existence of \(\mu^0 \) stems from the fact that \(\lambda \mapsto \inner{\lambda}{\sBar} - h(\lambda) \) is upper semi-continuous and the definition \( h^*(\sBar) = \sup_{\lambda \in \Y} \left( \inner{\lambda}{\sBar} - h(\lambda) \right) \).

Now we define the sequence \((\mu^k)_{k\in\N}\) by \(\forall k \in \N, \; \mu^{k+1} = \Prox_{\gamma_k h}(\mu^k)\).
The (exact) proximal point operator is nonexpansive, hence \( \left\Vert \mu^{k+1} - \lambda^{k+1} \right\Vert \leq \left\Vert \mu^{k+1} - \lambda_\star^{k+1} \right\Vert + \varepsilon_{k+1} \leq \left\Vert \mu^k - \lambda^k \right\Vert  + \varepsilon_{k+1} \). 
By summing all these inequalities for \(i\) from 0 to \(N\) we have 
\begin{equation} \label{eq:ineq_distance_lambdk_muk}
    \forall k \in \N \;:\; \left\Vert \mu^k - \lambda_\star^k \right\Vert \leq \left\Vert \mu^0 - \lambda^0 \right\Vert + \sum_{i=1}^k \varepsilon_i  .
\end{equation}
Further, convexity of \(h\) and \( s_\star^k \in \partial h(\lambda_\star^k) \) allows reaching
\begin{align}
    h(\lambda_\star^k) - h(\mu^k) - \inner{\sBar}{\lambda_\star^k - \mu^k} &\leq \inner{s_\star^k}{\lambda_\star^k - \mu^k} - \inner{\sBar}{\lambda_\star^k - \mu^k} \nonumber \\
    &\leq \inner{s_\star^k - \sBar}{\lambda_\star^k - \mu^k  } \nonumber \\
    &\leq \left\Vert s_\star^k - \sBar \right\Vert \left( \left\Vert \lambda^0 - \mu^0 \right\Vert + \sum_{i=1}^\infty \varepsilon_i \right) , \label{eq:diff_between_distance_to_asymp_hyperplane_mu_lambd}
\end{align}
where we used \eqref{eq:ineq_distance_lambdk_muk} in the last inequality.
Furthermore \( h(\mu^k) - \inner{\mu^k}{\sBar}\) is nonincreasing (as indicated by \eqref{eq:distance_to_asymptotic_plane_decrease} when \(\varepsilon_k\) is substituted by 0 and \(\lambda\) by \(\mu\)), hence
\begin{equation} \label{eq:majoration_distance_asymp_hyperplane_mu}
    \forall k \in \N \;: \; h(\mu^k) + h^*(\sBar) - \inner{\mu^k}{\sBar} \leq \delta .
\end{equation}
Combining \eqref{eq:diff_between_distance_to_asymp_hyperplane_mu_lambd} and \eqref{eq:majoration_distance_asymp_hyperplane_mu} we get
\begin{align}
    h(\lambda_\star^k) + h^*(\sBar) - \inner{\sBar}{\lambda_\star^k} &\leq \begin{multlined}[t] h(\mu^k) + h^*(\sBar) - \inner{\mu^k}{\sBar} \\ + \left\Vert s_\star^k - \sBar \right\Vert \left( \left\Vert \lambda^0 - \mu^0 \right\Vert + \sum_{i=1}^\infty \varepsilon_i \right) \label{eq:dominance of the distance to the asymptotic hyperplane} \end{multlined} \\
    &\leq \delta + \underbrace{\left\Vert s_\star^k - \sBar \right\Vert }_{\to 0} \left( \left\Vert \lambda^0 - \mu^0 \right\Vert + \sum_{i=0}^\infty \varepsilon_i \right). \nonumber 
\end{align}
\added{Here, \(\left\Vert s_\star^k - \sBar \right\Vert \to 0\) follows from \Cref{prop:sk_convergence}}.
Therefore, since \(\sum_{i=0}^\infty \varepsilon_i < \infty\) by \ref{(A1)}, we have \( \limsup_{k \to \infty} h(\lambda_\star^k) + h^*(\sBar) - \inner{\sBar}{\lambda_\star^k} \leq \delta \). 
We also have, from the definition of \(h^*\), that \( h(\lambda_\star^k) + h^*(\sBar) - \inner{\sBar}{\lambda_\star^k} \geq 0\).
The choice of \(\delta\) being arbitrary, we have shown that \( h(\lambda_\star^k) + h^*(\sBar) - \inner{\sBar}{\lambda_\star^k} \to 0 \).

\bigskip
\item
\noindent\textbullet \; \textbf{Case 2 \( \sBar \in \range(\partial h) \).}

In the case where \( \sBar \in \mathrm{dom}(\partial h^*)\), the reasoning is similar to that of Case~1, except that we can directly choose \(\mu^0 \in \partial h^*(\sBar) \). 
Since \( h(\mu^k) - \inner{\mu^k}{\sBar}\) is nonincreasing (as indicated by \eqref{eq:distance_to_asymptotic_plane_decrease} when \(\varepsilon_k\) is substituted by 0 and \(\lambda\) by~\(\mu\)), we have \(\forall k \in \N , \;  -h^*(\sBar) = \inf_{\mu \in \Y} h(\mu) - \inner{\mu}{\sBar} \leq h(\mu^k) - \inner{\mu^k}{\sBar} \leq h(\mu^0) - \inner{\mu^0}{\sBar} = -h^*(\sBar)\), so \( h(\mu^k) - \inner{\mu^k}{\sBar} = -h^*(\sBar)\).
 Using this equality and \eqref{eq:dominance of the distance to the asymptotic hyperplane} leads to
\begin{align*}
    & h(\lambda_\star^k) + h^*(\sBar) - \inner{\sBar}{\lambda_\star^k} \leq  \left\Vert s_\star^k - \sBar \right\Vert \left( \left\Vert \lambda^0 - \mu^0 \right\Vert + \sum_{i=1}^\infty \varepsilon_i \right) = O \left( \tfrac{1}{ \sqrt{ \sum_{i=0}^{\changed{k-1}} \gamma_i}} \right)
\end{align*}
where we used \(\sum_{i=0}^\infty \varepsilon_i < \infty\) from \ref{(A1)} and the rate from \Cref{prop:sk_convergence} .
\end{proof}

We now study the convergence of the convex conjugate iterates \( (h^*(s_\star^k))_{k\in\N}\).

\begin{theorem} \label{thm:convergence_h_s_k_to_h_s_bar}
    Let \(h\) be a closed, proper and convex function. Let the sequence \((\lambda_\star^{k+1}, s_\star^{k+1}, \lambda^{k+1} , s^{k+1})_{k\in\N}\) be from an IPPA (\Cref{algo:IPPA}) on \(h\)
    \changed{with errors \((\varepsilon_k)_{k\in\N^*}\) and step sizes \((\gamma_k)_{k\in\N}\) satisfying \Cref{assumption:errors_and_step_sizes}}.
     \newline Let \(\sBar = \argmin_{s \in \cl(\range(\partial h) )} \|s\|^2\). 
    
    The sequence \( h^*(s_\star^{k})_{k\in \N^*} \) converges in \(\R \cup \{+\infty\}\) to \( h^*(\sBar)\).
\end{theorem}

\begin{proof}
If \(h^*(\sBar) = \infty\), then the result is clear from the fact that \(h^*\) is lower-semicontinuous and \( s_\star^k \to \sBar\).
In the rest of this proof, we therefore assume that \(h^*( \sBar) < \infty\). The proof consists of showing that \( \limsup_{K \in \N} h^*(s^K) - h^*(\sBar) \leq 0  \), since \(h^*\) is lower-semicontinuous that will be sufficient to obtain the result \(h^*(s^K) \to h^*(\sBar)\). 
To show \(\limsup_{K \in \N} h^*(s^K) - h^*(\sBar) \leq 0\) we will majorize \(h^*(s^K) - h^*(\sBar)\) by various vanishing quantities including the distance to the asymptotic plane of \Cref{prop:convergence_vertical_dist_asymptotic_plane}.
\deleted{The derivation of this proof was facilitated by the performance estimation approach (PEP) \cite{drori2014performance,taylor2017exact}. \added{Note that since the following inequalities were derived using PEP, they may appear intricate upon first reading.}}

For any \(k \in \N, K \in \N^*\) with \(k\leq K\),
\begin{align*}
    &   \inner{s_\star^k}{s_\star^K - \sBar} +   \sum_{j=k}^{K-1} \inner{s_\star^{j+1}}{s_\star^{j+1} - s_\star^j } - 2   \inner{\sBar}{s_\star^K - \sBar} + \inner{s_\star^k}{s_\star^k - \sBar} \\
    & \quad \added{= \begin{multlined}[t] \sum_{j=k}^{K-1} \left( \left\Vert s_\star^{j+1} \right\Vert^2 - \inner{s_\star^{j+1}}{s_\star^j}  \right) + \tfrac{\left\Vert s_\star^k \right\Vert^2}{2} - \tfrac{\left\Vert s_\star^K \right\Vert^2}{2} \\ + 2 \left( \Vert\bar{s}\Vert^2 + \tfrac{\Vert s_\star^K\Vert^2}{4} + \tfrac{\Vert s_\star^k\Vert^2}{4} - \inner{\bar{s}}{s_\star^K} - \inner{\bar{s}}{s_\star^k} + \tfrac{1}{2}\inner{s_\star^k}{s_\star^K} \right) \end{multlined}}\\
    & \quad \changed{=} \sum_{j=k}^{K-1} \left( \left\Vert s_\star^{j+1} \right\Vert^2 - \inner{s_\star^{j+1}}{s_\star^j}  \right) + \tfrac{\left\Vert s_\star^k \right\Vert^2}{2} - \tfrac{\left\Vert s_\star^K \right\Vert^2}{2} + 2 \left\Vert \sBar - \tfrac{s_\star^K + s_\star^k}{2} \right\Vert^2 \\
    & \quad =  \begin{multlined}[t] \sum_{j=k}^{K-1} \left( \tfrac{1}{2}\left\Vert s_\star^{j+1} \right\Vert^2 - \inner{s_\star^{j+1}}{s_\star^j}  \right) +  \sum_{j=k+1}^{K}\left( \tfrac{1}{2}\left\Vert s_\star^{j} \right\Vert^2 \right) + \tfrac{\left\Vert s_\star^k \right\Vert^2}{2} - \tfrac{\left\Vert s_\star^K \right\Vert^2}{2} \\ + 2 \left\Vert \sBar - \tfrac{s_\star^K + s_\star^k}{2} \right\Vert^2 \end{multlined}  \\
    & \quad = \sum_{j=k}^{K-1} \left( \tfrac{1}{2}\left\Vert s_\star^{j+1} -  s_\star^j  \right\Vert^2  \right) + 2 \left\Vert \sBar - \tfrac{s_\star^K + s_\star^k}{2} \right\Vert^2  \\
    & \quad \geq 0 ,
\end{align*}
and since \(\inner{\sBar}{s_\star^K - \sBar} \geq 0\) \added{(by definition \(\bar{s} = \Proj_{\Scal}(0)\))} we obtain
\begin{equation} \label{eq:ineq_PEP1}
    -\inner{s_\star^k}{s_\star^K - \sBar} \leq \inner{s_\star^k}{s_\star^k - \sBar} +   \sum_{j=k}^{K-1} \inner{s_\star^{j+1}}{s_\star^{j+1} - s_\star^j } .
\end{equation}

But we also observe that for all \(j\), using the convexity of \(h\) and \(s_\star^{j+1} \in \partial h (\lambda_\star^{j+1})\) and \(s_\star^{j} \in \partial h (\lambda_\star^{j})\), 
{\allowdisplaybreaks
\begin{align*}
    \inner{s_\star^{j+1}}{s_\star^{j+1} - s_\star^j } & =  \tfrac{1}{\gamma_j}\inner{ \lambda^j - \lambda_\star^{j+1}}{s_\star^{j+1} - s_\star^j } \\
    & =  \tfrac{1}{\gamma_j}\inner{\lambda_\star^j - \lambda_\star^{j+1}}{s_\star^{j+1} - s_\star^j } + \tfrac{1}{\gamma_j}\inner{\lambda^{j} - \lambda_\star^j}{s_\star^{j+1} - s_\star^j } \\
    & \leq  0 + \tfrac{1}{\gamma_j}\Vert\lambda^{j} - \lambda_\star^j\Vert \left( \Vert s_\star^{j+1}\Vert +  \Vert s_\star^j \Vert \right) \\
    & \leq  2M_s\tfrac{\varepsilon_{j}}{\gamma_j}.
\end{align*}}
Therefore \eqref{eq:ineq_PEP1} becomes 
\begin{equation} \label{eq:ineq_PEP1b}
    -\inner{s_\star^k}{s_\star^K - \sBar} \leq \inner{s_\star^k}{s_\star^k - \sBar} +  2M_s  \sum_{j=k}^{K-1} \tfrac{\varepsilon_{j}}{\gamma_j} .
\end{equation}
Further, the convexity of \(h^* \) and the expansion \(\lambda_\star^K - \lambda^l = - \sum_{i=l+1}^{K-1} \gamma_{i-1} s^i - \gamma_{\changed{K-1}}  s_\star^K \) allows writing
{\allowdisplaybreaks
\begin{align*}
    & h^*(s_\star^K) - h^*(\sBar) \leq \inner{\lambda_\star^K}{s_\star^K - \sBar} \\
    & \changed{=} \inner{\lambda^l}{s_\star^K - \sBar}  + \inner{\lambda_\star^K - \lambda^l}{s_\star^K - \sBar}    \\
    &  \changed{=} \begin{multlined}[t] \inner{\lambda^l}{s_\star^K - \sBar} - \sum_{i=l+1}^{K-1} \gamma_{i-1} \inner{s^i}{s_\star^K - \sBar} - \gamma_{K-1} \inner{s_\star^K}{s_\star^K - \sBar} \\ \text{ \deleted{(using $\inner{s^i}{s_\star^K - \sBar} \geq 0$)}} \end{multlined} \\
    &  \changed{=} \begin{multlined}[t] \inner{\lambda^l}{s_\star^K - \sBar} - \sum_{i=l+1}^{K-1} \gamma_{i-1} \inner{s_\star^i}{s_\star^K - \sBar}  - \sum_{i=l+1}^{K-1} \gamma_{i-1} \inner{s^i - s_\star^i}{s_\star^K - \sBar} \\ - \gamma_{K-1} \inner{s_\star^K}{s_\star^K - \sBar} \deleted{\text{ (using $\inner{s^i - s_\star^i}{s_\star^K - \sBar} \geq 0$)}} \end{multlined} \\
    &  \leq \begin{multlined}[t] \inner{\lambda^l}{s_\star^K - \sBar}  - \sum_{i=l+1}^{K-1} \gamma_{i-1} \inner{s_\star^i}{s_\star^K - \sBar} + 2M_s \sum_{i=l+1}^{K-1} \varepsilon_i - \gamma_{K-1} \inner{s_\star^K}{s_\star^K - \sBar}  \\ \changed{\left(\text{using } \inner{s^i - s_\star^i}{s_\star^K - \sBar} \leq (\Vert s_\star^K \Vert + \Vert \sBar \Vert )\Vert s^i - s_\star^i \Vert \leq 2 M_s \tfrac{\varepsilon_i}{\gamma_{i-1}} \right) } \end{multlined} \\
    & \leq \begin{multlined}[t] \inner{\lambda^l}{s_\star^K - \sBar}  + \sum_{i=l+1}^{K} \gamma_{i-1} \left( \inner{s_\star^i}{s_\star^i - \sBar} +  2M_s \sum_{j=i}^{K-1} \tfrac{\varepsilon_{j}}{\gamma_j} \right) \\ + 2M_s \sum_{i=l+1}^{K-1} \varepsilon_i \text{ (using \eqref{eq:ineq_PEP1b})} \end{multlined} \\
    & \leq \deleted{ \begin{multlined}[t] \inner{\lambda^l}{s_\star^K - \sBar}  + \sum_{i=l+1}^{\infty} \gamma_{i-1}  \inner{s_\star^i}{s_\star^i - \sBar} +  2M_s \sum_{j=l+1}^{\infty} \sum_{i=l+1}^{j}  \gamma_{i-1} \tfrac{\varepsilon_{j}}{\gamma_j}  \\ + 2M_s \sum_{i=l+1}^{K-1} \varepsilon_i \quad (\text{using} \inner{s_\star^i}{s_\star^i - \sBar} \geq 0) \end{multlined} } \\
    & \leq \begin{multlined}[t] \inner{\lambda^l}{s_\star^K - \sBar}  + h(\lambda_\star^{l+1}) - \inner{\sBar}{\lambda_\star^{l+1}} + h^*(\sBar)  +  2M_s \sum_{j=l+1}^{\infty} \sum_{i=l+1}^{j}  \gamma_{i-1} \tfrac{\varepsilon_{j}}{\gamma_j} \\ + 4 M_s \sum_{i=l+1}^{\infty} \varepsilon_i  \text{ (using \eqref{eq:distance_to_asymptotic_plane_majoration})}. \end{multlined}
\end{align*}}
We now show that for \(l\) and \(K\) sufficiently large the term on the right of that inequality can be made as small as desired. 

For any \( \delta >0\), using \Cref{prop:convergence_vertical_dist_asymptotic_plane} we can choose \( l\) sufficiently large such that
\begin{equation*}
    h(\lambda_\star^{l+1}) - \inner{\sBar}{\lambda_\star^{l+1}} + h^*(\sBar) \leq \tfrac{\delta}{3} .
\end{equation*}
Due to \ref{(A4)} and \ref{(A1)}, one can also choose \(l\) larger and sufficiently large such that 
\[  2M_s \sum_{j=l+1}^{\infty}  \sum_{i=l+1}^{j}  \gamma_{i-1}\tfrac{\varepsilon_{j}}{\gamma_j}  + \changed{4} M_s \sum_{i=l+1}^{\infty} \varepsilon_i \leq \tfrac{\delta}{3} . \]
Due to \Cref{prop:sk_convergence} we also have, for a choice of \( K \) larger than \(l\) and sufficiently large, that \( \left\Vert \lambda^{\changed{l}} \right\Vert  \left\Vert  s^K - \sBar \right\Vert  \leq \tfrac{\delta}{3} \). Therefore, for \(K\) sufficiently large, \(h^*(s^K) - h^*(\sBar) \leq \delta \).
We have shown that \( \limsup_{K \in \N} h^*(s^K) - h^*(\sBar) \leq 0  \). Since \( h^* \) is lower-semicontinuous, we have  \(  h^*(s^K) - h^*(\sBar) \to 0  \)
\end{proof}

We can obtain a convergence rate under some subdifferentiability condition as shown in \Cref{thm:convergence rate h^*(s^k) - h^*(sBAR)}.

\begin{theorem} \label{thm:convergence rate h^*(s^k) - h^*(sBAR)}
    Let \(h\) be a closed proper convex function. Let the sequence  \((\lambda_\star^{k+1}, s_\star^{k+1}, \lambda^{k+1} , s^{k+1})_{k\in\N}\) be from an IPPA (\Cref{algo:IPPA}) on \(h\)
    \changed{with errors \((\varepsilon_k)_{k\in\N^*}\) and step sizes \((\gamma_k)_{k\in\N}\) satisfying \Cref{assumption:errors_and_step_sizes}}.
     \newline Let \(\sBar = \argmin_{s \in \cl(\range(\partial h) )} \|s\|^2\). 

If \(h^*\) is subdifferentiable at \(\sBar\), or equivalently if there exists \( \overline\lambda \) such that \( \sBar \in \partial h(\overline{\lambda}) \), then we have the following convergence rate:
\begin{equation*}
    \left| h^*(s_\star^k) - h^*(\sBar) \right| = O\left(\tfrac{1}{\sqrt{\sum_{i=0}^{\changed{k-1}} \gamma_i}} \right) .
\end{equation*}
\end{theorem}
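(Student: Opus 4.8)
The plan is to establish the two-sided estimate $-O\!\big(1/\sqrt{\textstyle\sum_{i<k}\gamma_i}\big) \le h^*(s_\star^k) - h^*(\sBar) \le O\!\big(1/\sqrt{\textstyle\sum_{i<k}\gamma_i}\big)$, the order being that of $\Vert s_\star^k - \sBar\Vert$ provided by \Cref{prop:sk_convergence} (which applies because subdifferentiability of $h^*$ at $\sBar$ forces $h^*(\sBar)<\infty$). Fix $\overline\lambda$ with $\sBar \in \partial h(\overline\lambda)$, equivalently $\overline\lambda \in \partial h^*(\sBar)$. The lower bound is immediate from convexity of $h^*$ at $\sBar$: $h^*(s_\star^k) - h^*(\sBar) \ge \inner{\overline\lambda}{s_\star^k - \sBar} \ge -\Vert\overline\lambda\Vert\,\Vert s_\star^k - \sBar\Vert$. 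For the upper bound, since $s_\star^k \in \partial h(\lambda_\star^k)$ the conjugate subgradient relation gives $\lambda_\star^k \in \partial h^*(s_\star^k)$, hence $h^*(s_\star^k) - h^*(\sBar) \le \inner{\lambda_\star^k}{s_\star^k - \sBar}$, and everything reduces to bounding this inner product from above by $O\!\big(1/\sqrt{\textstyle\sum_{i<k}\gamma_i}\big)$.

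The key object is the drift-corrected sequence $d^k \triangleq \lambda^k - \overline\lambda + \big(\textstyle\sum_{i=0}^{k-1}\gamma_i\big)\sBar$ (so $d^0=\lambda^0-\overline\lambda$), for which I will show $M_d \triangleq \sup_k\Vert d^k\Vert<\infty$. From $\lambda^{k+1}=\lambda^k-\gamma_k s^{k+1}$ we get $d^{k+1}=d^k+\gamma_k(\sBar-s^{k+1})$, hence
\[
    \Vert d^{k+1}\Vert^2 - \Vert d^k\Vert^2 = 2\gamma_k\inner{d^k}{\sBar - s^{k+1}} + \gamma_k^2\Vert \sBar - s^{k+1}\Vert^2 .
\]
Writing $\sBar-s^{k+1}=(\sBar-s_\star^{k+1})-(s^{k+1}-s_\star^{k+1})$ with $\Vert s^{k+1}-s_\star^{k+1}\Vert\le\varepsilon_{k+1}/\gamma_k$, I would keep $d^k$ intact against the error part ($|\inner{d^k}{s^{k+1}-s_\star^{k+1}}|\le\Vert d^k\Vert\varepsilon_{k+1}/\gamma_k$) and, for the main part, split $d^k=(\lambda^k-\overline\lambda)+\big(\textstyle\sum_{i<k}\gamma_i\big)\sBar$ and invoke (i) monotonicity of $\partial h$ between $\lambda_\star^{k+1}=\lambda^k-\gamma_k s_\star^{k+1}$ and $\overline\lambda$, together with $\inner{s_\star^{k+1}-\sBar}{s_\star^{k+1}}\ge\Vert s_\star^{k+1}-\sBar\Vert^2$, giving $\inner{\lambda^k-\overline\lambda}{\sBar-s_\star^{k+1}}\le-\gamma_k\Vert s_\star^{k+1}-\sBar\Vert^2$; and (ii) the variational inequality $\inner{\sBar}{\sBar-s_\star^{k+1}}\le0$ for $\sBar=\Proj_{\cl(\range(\partial h))}(0)$ evaluated at $s_\star^{k+1}\in\range(\partial h)$. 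The crucial point — and the reason \ref{(H1)} alone suffices rather than a stronger error hypothesis — is that the two error contributions that would separately carry the large factor $\sum_{i<k}\gamma_i$, namely $\inner{\lambda^k-\overline\lambda}{s^{k+1}-s_\star^{k+1}}$ and $\big(\textstyle\sum_{i<k}\gamma_i\big)\inner{\sBar}{s^{k+1}-s_\star^{k+1}}$, recombine via $\lambda^k-\overline\lambda = d^k - \big(\textstyle\sum_{i<k}\gamma_i\big)\sBar$ into the single harmless term $-\inner{d^k}{s^{k+1}-s_\star^{k+1}}$ of size $O(\varepsilon_{k+1})$. Using moreover $-2\gamma_k^2\Vert s_\star^{k+1}-\sBar\Vert^2+\gamma_k^2\Vert\sBar-s^{k+1}\Vert^2\le2\varepsilon_{k+1}^2$, one gets $\Vert d^{k+1}\Vert^2\le\Vert d^k\Vert^2+2\Vert d^k\Vert\varepsilon_{k+1}+2\varepsilon_{k+1}^2$, so $\Vert d^{k+1}\Vert\le\Vert d^k\Vert+2\varepsilon_{k+1}$, and by induction $\Vert d^k\Vert\le\Vert\lambda^0-\overline\lambda\Vert+2\sum_{i\ge1}\varepsilon_i=:M_d<\infty$ by \ref{(H1)}.

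Given $M_d<\infty$, I would finish by substituting $\lambda_\star^k = d^k+\overline\lambda-\big(\textstyle\sum_{i<k}\gamma_i\big)\sBar-(\lambda^k-\lambda_\star^k)$ (with $\Vert\lambda^k-\lambda_\star^k\Vert\le\varepsilon_k$) into $\inner{\lambda_\star^k}{s_\star^k-\sBar}$: the $\sBar$-term contributes $-\big(\textstyle\sum_{i<k}\gamma_i\big)\inner{\sBar}{s_\star^k-\sBar}\le0$ by the projection inequality and is discarded, while the three remaining terms are each at most $(M_d+\Vert\overline\lambda\Vert+\varepsilon_k)\Vert s_\star^k-\sBar\Vert$, which is $O\!\big(1/\sqrt{\textstyle\sum_{i<k}\gamma_i}\big)$ by \Cref{prop:sk_convergence}. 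Together with the lower bound this yields $|h^*(s_\star^k)-h^*(\sBar)|=O\!\big(1/\sqrt{\textstyle\sum_{i=0}^{k}\gamma_i}\big)$.

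The main obstacle is the boundedness of $(\Vert d^k\Vert)_k$: unlike the minimizer case, $d^k$ enjoys no genuine contraction, and the whole argument hinges on the exact cancellation described above, which is precisely where the hypothesis enters through the anchor $\overline\lambda\in\partial h^*(\sBar)$; once $M_d$ is known, the remainder is routine Cauchy–Schwarz bookkeeping on top of the already-established rate for $\Vert s_\star^k-\sBar\Vert$.
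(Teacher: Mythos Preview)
Your proof is correct and follows essentially the same approach as the paper: both arguments bound a drift-corrected sequence (the paper uses $\lambda_\star^{k}+\sum_{i}\gamma_i\,\sBar-\overline\lambda$, you use $d^k=\lambda^{k}+\sum_{i<k}\gamma_i\,\sBar-\overline\lambda$) via monotonicity of $\partial h$ and the projection inequality for $\sBar$, then sandwich $h^*(s_\star^k)-h^*(\sBar)$ between $\inner{\overline\lambda}{s_\star^k-\sBar}$ and $\inner{\lambda_\star^k}{s_\star^k-\sBar}$ and invoke the rate of \Cref{prop:sk_convergence}. The only differences are cosmetic (you work with $\lambda^k$ rather than $\lambda_\star^k$ and expand $\Vert d^{k+1}\Vert^2-\Vert d^k\Vert^2$, whereas the paper bounds $\Vert\lambda_\star^{k+1}+\cdots\Vert$ directly through an inner-product chain), and your explicit remark on why the error terms recombine to size $O(\varepsilon_{k+1})$ nicely isolates the point at which the anchor $\overline\lambda\in\partial h^*(\sBar)$ is indispensable.
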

 
\begin{proof}
The proof consists of finding an upper and a lower bound of \(h^*(s_\star^k) - h^*(\sBar)\) that are proportional to \(\left\Vert s_\star^k - \sBar \right\Vert\), then the desired convergence rate of \(h^*(s_\star^k) - h^*(\sBar)\) will stem from the convergence rate of \(\left\Vert s_\star^k - \sBar \right\Vert\).
We first show that \( \lambda_\star^k + \sum_{i=1}^k \gamma_i \sBar  - \overline{\lambda } \) is bounded. In the following, we will use \( \inner{\sBar}{s_\star^{k+1} - \sBar} \geq 0 \) and \(\inner{\lambda_\star^{k+1}  - \overline{\lambda }}{s_\star^{k+1} - \sBar} \geq 0\) (by convexity of \(h\) and the fact that \(\sBar \in \partial h(\overline{\lambda}) \), \(s_\star^{k+1} \in \partial h(\lambda_\star^{k+1}) \)).

\added{We first show that \( \lambda_\star^{k} \) can be approximated with finite error by \( \overline{\lambda } -\sum_{i=0}^{k-1} \gamma_i \sBar \). }
\begingroup
\allowdisplaybreaks[4]
\begin{align*} 
& \left\Vert \lambda_\star^{k+1} + \sum_{i=\changed{0}}^{\changed{k}} \gamma_i \sBar - \overline{\lambda } \right\Vert^2 = \\ 
& \qquad \inner{\lambda_\star^{k+1} +\sum_{i=\changed{0}}^{\changed{k}} \gamma_i \sBar - \overline{\lambda }}{\lambda_\star^{k} + (\lambda^{k} - \lambda_\star^{k})  - \gamma_k (s_\star^{k+1} - \sBar) + \sum_{i=\changed{0}}^{\changed{k-1}} \gamma_i \sBar - \overline{\lambda } }  \\
& \quad  \leq \begin{multlined}[t] \inner{\lambda_\star^{k+1} +\sum_{i=\changed{0}}^{\changed{k}} \gamma_i \sBar - \overline{\lambda }}{\lambda_\star^{k} - \gamma_k ( s_\star^{k+1} - \sBar) + \sum_{i=\changed{0}}^{\changed{k-1}} \gamma_i \sBar - \overline{\lambda } } \\ + \varepsilon_k \left\Vert \lambda_\star^{k+1} + \sum_{i=\changed{0}}^{\changed{k}} \gamma_i \sBar - \overline{\lambda } \right\Vert \end{multlined} \\
& \deleted{ \begin{multlined} \quad \changed{=} \inner{\lambda_\star^{k+1} + \sum_{i=\changed{0}}^{\changed{k}} \gamma_i \sBar - \overline{\lambda }}{\lambda_\star^{k}  + \sum_{i=\changed{0}}^{\changed{k-1}} \gamma_i \sBar - \overline{\lambda } } \qquad \qquad \\ - \gamma_k \inner{\lambda_\star^{k+1} + \sum_{i=\changed{0}}^{\changed{k}} \gamma_i \sBar - \overline{\lambda }}{s_{\changed{\star}}^{k+1} - \sBar}  \\  + \varepsilon_k \left\Vert \lambda_\star^{k+1} + \sum_{i=\changed{0}}^{\changed{k}} \gamma_i \sBar - \overline{\lambda } \right\Vert  \end{multlined}} \\
& \quad  \begin{multlined} \changed{=} \inner{\lambda_\star^{k+1} + \sum_{i=\changed{0}}^{\changed{k}} \gamma_i \sBar - \overline{\lambda }}{\lambda_\star^{k}  + \sum_{i=\changed{0}}^{\changed{k-1}} \gamma_i \sBar - \overline{\lambda }} - \gamma_k \inner{\lambda_\star^{k+1}  - \overline{\lambda }}{s_{\changed{\star}}^{k+1} - \sBar} \\ - \gamma_k \sum_{i=\changed{0}}^{\changed{k}} \gamma_i \inner{\sBar}{s_\star^{k+1} - \sBar}  + \varepsilon_k \left\Vert \lambda_\star^{k+1} + \sum_{i=\changed{0}}^{\changed{k}} \gamma_i \sBar - \overline{\lambda } \right\Vert \end{multlined} \\
& \quad \leq  \inner{\lambda_\star^{k+1} + \sum_{i=\changed{0}}^{\changed{k}} \gamma_i \sBar - \overline{\lambda }}{\lambda_\star^{k}  + \sum_{i=\changed{0}}^{\changed{k-1}} \gamma_i \sBar - \overline{\lambda }}  + 0 + \varepsilon_k \left\Vert \lambda_\star^{k+1} + \sum_{i=\changed{0}}^{\changed{k}} \gamma_i \sBar - \overline{\lambda } \right\Vert  \\
& \quad \leq \left\Vert \lambda_\star^{k+1} + \sum_{i=\changed{0}}^{\changed{k}} \gamma_i \sBar - \overline{\lambda } \right\Vert \left\Vert \lambda_\star^{k}  +\sum_{i=\changed{0}}^{\changed{k-1}} \gamma_i \sBar - \overline{\lambda } \right\Vert  + \varepsilon_k \left\Vert \lambda_\star^{k+1} + \sum_{i=\changed{0}}^{\changed{k}} \gamma_i \sBar - \overline{\lambda } \right\Vert ,
\end{align*}
therefore \( \left\Vert \lambda_\star^{k+1} + \sum_{i=\changed{0}}^{\changed{k}} \gamma_i \sBar - \overline{\lambda } \right\Vert \leq \left\Vert \lambda_\star^{k}  + \sum_{i=\changed{0}}^{\changed{k-1}} \gamma_i \sBar - \overline{\lambda } \right\Vert + \varepsilon_k \). This implies that \( \left\Vert \lambda_\star^{k} + \sum_{i=\changed{0}}^{\changed{k-1}} \gamma_i \sBar - \overline{\lambda } \right\Vert \) is bounded by \( \left\Vert  \lambda^{0} - \overline{\lambda } \right\Vert + \sum_{i=1}^\infty \varepsilon_i\).
We use this bound to establish the following inequality:
\begin{align*}
    h^*(s_\star^k) - h^*(\sBar) &\leq \inner{\lambda_\star^k}{s_\star^k - \sBar} \\
     &=  \begin{multlined}[t] \inner{\lambda_\star^k +  \sum_{i=0}^{\changed{k-1}} \gamma_i \sBar - \overline{\lambda}}{s_\star^k - \sBar} + \inner{\overline{\lambda}}{s_\star^k - \sBar} \\ - \sum_{i=0}^{\changed{k-1}} \gamma_i \inner{\sBar}{s_\star^k - \sBar} \end{multlined} \\
    &\leq \begin{multlined}[t] \left( \left\Vert  \lambda_\star^{k} + \sum_{i=0}^{\changed{k-1}} \gamma_i \sBar - \overline{\lambda } \right\Vert +  \left\Vert \overline{\lambda} \right\Vert \right) \left\Vert s_\star^k - \sBar \right\Vert \\  \left( \text{using } \inner{\sBar}{s_\star^k - \sBar} \geq 0 \text{ \added{because $\bar{s} = \Proj_{\Scal}(0)$}}\right)  \end{multlined} \\
    &\leq  \left(  \left\Vert  \lambda^{0} - \overline{\lambda } \right\Vert + \sum_{i=1}^\infty \varepsilon_i  + \left\Vert \overline{\lambda} \right\Vert \right) \left\Vert s_\star^k - \sBar \right\Vert.
\end{align*}
\endgroup
This provides an upper bound on \(h^*(s_\star^k) - h^*(\sBar)\). By convexity, one also has \(h^*(s_\star^k) - h^*(\sBar) \geq \inner{\overline{\lambda}}{s_\star^k - \sBar} \geq - \left\Vert \overline{\lambda} \right\Vert \left\Vert s_\star^k - \sBar \right\Vert  \) 
which provides a lower bound. 
Since \( \left\Vert s^k - \sBar \right\Vert = O\left(\tfrac{1}{\sqrt{\sum_{i=\changed{0}}^{\changed{k-1}} \gamma_i}} \right) \) by \Cref{prop:sk_convergence} we have \( h^*(s_\star^k) - h^*(\sBar) = O\left(\tfrac{1}{\sqrt{\sum_{i=\changed{0}}^{\changed{k-1}} \gamma_i}} \right) \).
\end{proof}

The next section applies the results obtained in this section for IPPA to the particular case of IALM where we choose \(h = \nu^* \).

\section{Convergence of IALM} \label{sec:convergence_ialm}

\begin{figure}[t]
\begin{center}
\resizebox{0.9\linewidth}{!}{%
   \begin{tikzpicture}[
      box/.style = {
        draw,
        rectangle,
        minimum width=3.0cm,
        minimum height=1.2cm,
        align=center
      },
      dashedbox/.style = {
        draw,
        rectangle,
        minimum width=3.0cm,
        minimum height=1.2cm,
        align=center,
        dashed
      },
      font=\large
    ]
    \NoHyper

    \node[dashedbox, text width=5cm] (always holds) at (8, 4) {
      Always holds.
    };

    \node[box, text width=5cm] (finite dim rec dir) at (0, 4) {
      Finite dimensions. Constraints and \(f\) have no common recession directions.
    };

    \node[box, text width=5cm] (f level bounded) at (0, 0) {
      \(f\) is level bounded with respect to constraint perturbations.
    };
    \draw[->] (finite dim rec dir.south) -- node[sloped, above] {\Cref{prop:rec_dir_imply_level_bounded}}  (f level bounded) ;

    \node[box, text width=5cm] (nu sbar  finite) at (0, -4) {
      The closest feasible problem has finite value: \(\nu(\sBar) < \infty\).
    };

    \node[box, text width=5cm] (f lipschitz on bounded sets) at (0, -6) {
      \(f\) is Lipschitz on any bounded set.
    };
   
    \node[box, text width=5cm] (polyhedral constraints) at (0, -8) {
      The constraints are polyhedral.
    };

    \node[box, text width=5cm] (f lipschitz) at (0, -10) {
      \(f\) is Lipschitz.
    };

    \node[box, text width=5cm] (xk bounded) at (5.2, -2) {
      Sequence \((x^k)_{k\in\N}\) is bounded.
    };
    \node[draw, circle, minimum size=0.9cm] (plus 1) at (0,-2) {$+$};
    \draw[->] (f level bounded.south) + (-1,0) -- node[sloped, above] {}  (plus 1) ;
    \draw[->] (nu sbar  finite.north) + (-1,0) -- node[sloped, above] {}  (plus 1) ;
    \draw[->] (plus 1) -- node[sloped, above] {\Cref{thm:level boundedness locally uniformly implies lsc}} (xk bounded) ;

    \node[box, text width=5cm] (nu lsc) at (8, 2) {
      \(\nu\) is lower-semicontinuous at \(\sBar\).
    };
    \draw[->] (xk bounded) -- node[sloped, above] {\Cref{thm:boundedness of the xk implies convergence}}  (nu lsc) ;

    \node[box, text width=5cm] (nu subdifferentiable) at (8, -7.5) {
      \(\nu\) is subdifferentiable at \(\sBar\).
    };
    \node[draw, circle, minimum size=0.9cm] (plus 3) at (4,-6) {$+$};
    \draw[->] (f lipschitz on bounded sets)  -- node[sloped, above] {}  (plus 3) ;
    \draw[->] (xk bounded) -- node[sloped, above] {}  (plus 3) ;
    \draw[->] (polyhedral constraints.east) + (0,0.4) -- node[sloped, above] {}  (plus 3) ;
    \draw[->] (plus 3) -- node[sloped, above] {\Cref{thm:polyhedral_constraints_levelbounded_imply_nu_subdiff}} (nu subdifferentiable) ;
    \node[draw, circle, minimum size=0.9cm] (plus 4) at (4,-9) {$+$};
    \draw[->] (f lipschitz.east) -- node[sloped, above] {}  (plus 4) ;
    \draw[->] (polyhedral constraints.east) + (0,-0.4) -- node[sloped, above] {}  (plus 4) ;
    \draw[->] (plus 4) -- node[sloped, below] {\Cref{cor:polhedral_constraints_Lipschitz_objective_imply_subdifferentiability_nu}} (nu subdifferentiable) ;

    \node[box, text width=5cm] (always has limit) at (16, 4) {
    Function and constraint values, \(f(x^k)\) and \(C(x^k) - y^k\), have a limit.} ;
    \draw[->] (always holds) to node[sloped, above] {\Cref{thm:contraint transgression and value convergence}} (always has limit.west);

    \node[box, text width=5cm] (simple convergence) at (16, 1) { Simple convergence to the closest feasible problem.} ;
    \draw[->] (nu lsc) -- node[sloped, above] {\Cref{thm: nu lsc implies convergence}}  (simple convergence.west) ;

    \node[box, text width=5cm] (semi-quantitative convergence) at (16, -2) { Semi-quantitative convergence to the closest feasible problem.} ;
    \node[draw, circle, minimum size=0.9cm] (plus 2) at (10, 0) {$+$};
    \draw[->] (nu lsc) -- node[sloped, above] {}  (plus 2) ;
    %

    \draw[->, out=0, in=250] (nu sbar finite) to node[sloped, above] {} (plus 2);
    \draw[->] (plus 2) -- node[sloped, above] {\Cref{thm: nu lsc implies convergence}}  (semi-quantitative convergence.west) ;
    
    \node[box, text width=5cm] (xk to Sol sbar) at (16, -5) {
      Distance to solution set of closest feasible problem vanishes: \( \mathrm{dist}(x^k, \mathrm{Sol}_{\sBar}) \to 0 \).
    };
    \draw[->] (xk bounded.east) -- node[sloped, above] {\Cref{thm:boundedness of the xk implies convergence}}  (xk to Sol sbar.west) ;

    \node[box, text width=5cm] (quantitative convergence) at (16, -8) { Quantitative convergence to the closest feasible problem.} ;
    \draw[->] (nu subdifferentiable) -- node[sloped, above] {\Cref{thm: nu subdifferentiable implies convergence}}  (quantitative convergence.west) ;

    \endNoHyper

    \end{tikzpicture}
} 
\end{center}
\caption{Informal diagram summarizing the chain of implications between properties shown in \Cref{sec:convergence_ialm}. Arrows represent sufficient conditions. Plus signs represent the logical AND operator.}
\end{figure}

This section establishes the convergence properties of IALM in the case where the convex optimization problem might be infeasible.
 The relationship between the IALM on~\eqref{eq:convex_min_problem} and the IPPA on the corresponding dual function established in \Cref{sec:IALM is IPPA} allows us to translate the results of \Cref{sec:IPPA} into results on IALM.

Of particular interest to us is the case where the \changed{ limiting behavior of IALM can be interpreted in terms of the closest feasible problem \eqref{eq:bilevel optimization problem}}.
\added{The value of the shifted problem with the minimal constraint transgression is, by definition,} \(\nu(\sBar)\), and it can be written with simple algebraic manipulation from \eqref{eq:convex_min_problem_shifted} as the bilevel optimization problem of the least constraint transgression:
\begin{equation} \label{eq:bilevel optimization problem} 
\begin{array}{rl}
    \displaystyle \nu(\sBar) = \min_{x\in \X} & f(x) \\
    \text{s.t.} & x \in \argmin\limits_{ \tilde x \in  \mathrm{dom}(f) } \left\Vert C(\tilde x) - \Proj_{\KK} (C(\tilde x)) \right\Vert  .
\end{array}
\end{equation}
\added{Whenever \eqref{eq:bilevel optimization problem} is defined, we also call it the closest feasible problem.
 We note \(\mathrm{Sol}_{\sBar} = \{ x \in \X \mid f(x) = \nu(\sBar) \text{ and } C(x) \in \KK + \sBar \}\) the solution set of the closest feasible problem.}

We define three types of convergence to the closest feasible problem, which will be established by the theorems in this section.

{
\begin{definition}[Convergence and quantitative convergence of IALM] \label{def:convergence_closest_feasible}
Let \( (x^k, y^k, \lambda^k, s^k )_{k \in \N} \) be a sequence generated by an IALM (\Cref{algo:IALM}) associated with problem \eqref{eq:convex_min_problem}. We say that IALM \textbf{simply converges} to the closest feasible problem when 
\begin{itemize}
    \item \(f(x^k) \to \nu(\sBar) \) (the function values converge to the value of the closest feasible problem),
    \item \( C(x^k) - y^k \to \sBar  \) (the constraint transgression is asymptotically minimized),
    \item \( \left\Vert  y^k  - \Proj_\KK ( C(x^k) ) \right\Vert \to 0 \) (the slack variable \(y^k\) asymptotically coincides with the projection of \(C(x^k)\) onto \(\KK\)). 
\end{itemize}
We say that IALM \textbf{semi-quantitatively converges} if it simply converges and furthermore 
\begin{itemize}
    \item \(\left\Vert C(x^k) - y^k -\sBar \right\Vert = O\left(\tfrac{1}{\sqrt{\sum_{i=\changed{0}}^{\changed{k-1}} \gamma_i}} \right) \),
    \item \( \left\Vert  y^k  - \Proj_\KK ( C(x^k) ) \right\Vert = O\left(\tfrac{1}{\sqrt{\sum_{i=\changed{0}}^{\changed{k-1}} \gamma_i}} \right)\).
\end{itemize}
We say that IALM \textbf{quantitatively converges} if it semi-quantitatively converges and furthermore 
\begin{itemize}
    \item \( \left\vert f(x^k) - \nu(\sBar) \right\vert = O\left(\tfrac{1}{\sqrt{\sum_{i=\changed{0}}^{\changed{k-1}} \gamma_i}} \right) \).
\end{itemize}
\end{definition}}

\changed{Before tackling the convergence of IALM, we first clarify the meaning of lower semicontinuity and subdifferentiability of the value function \(\nu\) of the shifted problem,
 as these notions are used in \Cref{thm: nu lsc implies convergence} and \Cref{thm: nu subdifferentiable implies convergence}. The following two statements, due to \cite[Theorem 15]{rockafellar1974conjugate}, hold:
\begin{itemize}
    \item \(\nu\) is lower-semicontinuous at a given point \(\tilde s\) if and only if there is zero duality gap for the \(\tilde s\)-shifted Problem \eqref{eq:convex_min_problem_shifted}.
    \item \(\nu\) is subdifferentiable at a given point \(\tilde s\) if and only if there exists a Kuhn-Tucker vector associated with the \(\tilde s\)-shifted Problem \eqref{eq:convex_min_problem_shifted}, or equivalently, if the dual of this shifted problem has a minimizer.
\end{itemize}
These classical results are recalled solely for interpretative purposes.}

\deleted{However, classical theorems such as those depending on constraint qualification typically do not help to establish the strong duality of the \(\sBar\)-shifted problem when the original Problem \eqref{eq:convex_min_problem} is infeasible. 
Indeed the shift \(\sBar\) renders the shifted problem only marginally feasible, which typically implies that the \(\sBar\)-shifted constraint set has empty interior.}

\subsection{Convergence of IALM to the closest feasible problem}

The following theorem states that IALM always converges in value and constraint transgression. 

\begin{theorem} \label{thm:contraint transgression and value convergence}
    Let \( (x^k, y^k, \lambda^k, s^k )_{k \in \N} \) be a sequence generated by an IALM (\Cref{algo:IALM}) associated with problem~\eqref{eq:convex_min_problem} 
    \changed{with errors \((\varepsilon_k)_{k\in\N^*}\) and penalty parameters \((\gamma_k)_{k\in\N}\) satisfying \Cref{assumption:errors_and_step_sizes}}.
     it holds that:
\begin{itemize}
    \item[(a)] \(  C(x^k) - y^k \to \sBar = \argmin_{s \in \cl(\Scal)} \left\Vert s \right\Vert \),
    \item[(b)] \(\left\Vert  y^k  - \Proj_\KK ( C(x^k) ) \right\Vert \to 0 \), 
    \item[(c)] \(f(x^k) \to h^*(\sBar)\).
\end{itemize}
Furthermore if \(h^*(\sBar) < \infty\) (which is true in particular if the closest feasible problem is defined since \( h^*\) is the closure of \(\nu\)) the following holds:
\begin{itemize}
    \item[(d)] \(\left\Vert C(x^k) - y^k -\sBar \right\Vert = O\left(\tfrac{1}{\sqrt{\sum_{i=\changed{0}}^{\changed{k-1}} \gamma_i}} \right) \),
    \item[(e)] \(\left\Vert  y^k  - \Proj_\KK ( C(x^k) ) \right\Vert = O\left(\tfrac{1}{\sqrt{\sum_{i=\changed{0}}^{\changed{k-1}} \gamma_i} }\right)\).
\end{itemize}
\end{theorem}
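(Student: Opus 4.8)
The plan is to prove everything by translating the IALM iterates into the inexact proximal-point framework of \Cref{sec:IPPA}. By \Cref{prop:approx aug Lagrangian is approx prox point}, the sequence $((\lambda^k,s^k))_{k\in\N}$ is a valid IPPA sequence on the negative dual function $h$ with the same step sizes and errors, and by \Cref{lem:set_inclusions} one has $\cl(\Scal)=\cl(\range(\partial h))$, so the primal definition $\sBar=\argmin_{s\in\cl(\Scal)}\|s\|$ coincides with the dual one used throughout \Cref{sec:IPPA}. Since \ref{(H1)}, \ref{(H3)}, \ref{(H5)} are assumed, \Cref{lem:IPPA_properties} gives that $(s_\star^k)_{k\in\N}$ and $(s^k)_{k\in\N}$ are bounded, which is the standing hypothesis needed to apply the remaining IPPA statements.

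For the first item, I would apply \Cref{prop:sk_convergence} (valid under \ref{(H1)}, \ref{(H2a)}, \ref{(H3)}, \ref{(H5)}): it gives $s^k\to\sBar$, and since $s^k=C(x^k)-y^k$ by construction in \Cref{algo:IALM}, this is precisely $C(x^k)-y^k\to\sBar$. When $h^*(\sBar)<\infty$, part (d) of the same proposition provides the rate $\|C(x^k)-y^k-\sBar\|=O\!\left(\tfrac{1}{\sqrt{\sum_{i=1}^k\gamma_i}}\right)$.

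The second item is the one genuinely new computation, and I expect it to be the main obstacle. Write $p^k=\Proj_\KK(C(x^k))$ and $d^k=C(x^k)-p^k$, so $\|d^k\|$ is the distance from $C(x^k)$ to $\KK$. On one hand $d^k\in C(x^k)-\KK$, so $(x^k,d^k)\in\CC$, hence $d^k\in\Scal$ and $\|d^k\|\ge\|\sBar\|$ by minimality of $\sBar$; on the other hand $y^k\in\KK$ forces $\|d^k\|\le\|C(x^k)-y^k\|=\|s^k\|$. Since $\|s^k\|\to\|\sBar\|$, we get $\|d^k\|\to\|\sBar\|$; and because $\sBar$ is the projection of $0$ onto the closed convex set $\cl(\Scal)$ and $d^k\in\cl(\Scal)$, the obtuse-angle inequality $\inner{d^k-\sBar}{\sBar}\ge0$ yields $\|d^k-\sBar\|^2\le\|d^k\|^2-\|\sBar\|^2\to0$, i.e.\ $d^k\to\sBar$. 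Finally $y^k-p^k=d^k-s^k$, so $\|y^k-\Proj_\KK(C(x^k))\|\le\|d^k-\sBar\|+\|s^k-\sBar\|\to0$. For the rate, when $h^*(\sBar)<\infty$ part (b) of \Cref{prop:sk_convergence} gives $\|d^k-\sBar\|^2\le\|s^k\|^2-\|\sBar\|^2=O\!\left(\tfrac{1}{\sum_{i=1}^k\gamma_i}\right)$, which combined with part (d) gives $\|y^k-\Proj_\KK(C(x^k))\|=O\!\left(\tfrac{1}{\sqrt{\sum_{i=1}^k\gamma_i}}\right)$.

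For the third item, \Cref{prop:h*(sk) close to f(xk)} (applicable since $(s^k)$ is bounded and \ref{(H1)}, \ref{(H2a)}, \ref{(H5)} hold) gives $|f(x^{k+1})-h^*(s_\star^{k+1})|\to0$, while \Cref{thm:convergence_h_s_k_to_h_s_bar} (applicable under \ref{(H1)}, \ref{(H2a)}, \ref{(H3)}, \ref{(H4)}, \ref{(H5)}) gives $h^*(s_\star^k)\to h^*(\sBar)$ in $\R\cup\{\infty\}$; noting that each $h^*(s_\star^{k})$ is finite by the Fenchel--Young equality (as $s_\star^{k}\in\partial h(\lambda_\star^{k})$), these two facts combine to give $f(x^k)\to h^*(\sBar)$, including the case $h^*(\sBar)=\infty$. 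The parenthetical remark follows from $h^*=\cl(\nu)\le\nu$, hence $h^*(\sBar)\le\nu(\sBar)$, which is finite whenever the closest feasible problem is well defined. Apart from the geometric estimate of the second item, the whole argument is a careful transcription of the IPPA results, the only thing to watch being which of \ref{(H1)}--\ref{(H5)} each invoked statement requires.
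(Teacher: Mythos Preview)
Your proposal is correct and follows essentially the same route as the paper: reduce to IPPA via \Cref{prop:approx aug Lagrangian is approx prox point} and \Cref{lem:set_inclusions}, invoke \Cref{prop:sk_convergence} for the first item and its rate, combine \Cref{prop:h*(sk) close to f(xk)} with \Cref{thm:convergence_h_s_k_to_h_s_bar} for the third, and handle the second item by comparing $d^k=C(x^k)-\Proj_\KK(C(x^k))$ with $s^k$ through the obtuse-angle inequality for $\sBar$. Your treatment of the second item is in fact slightly cleaner than the paper's: you use the identity $y^k-\Proj_\KK(C(x^k))=d^k-s^k$ and a direct triangle inequality, whereas the paper reaches the same bound $\|y^k-\Proj_\KK(C(x^k))\|\le\|s^k-\sBar\|+\sqrt{\|s^k\|^2-\|\sBar\|^2}$ by expanding a square and applying Cauchy--Schwarz.
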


\begin{proof}

\Cref{prop:approx aug Lagrangian is approx prox point} implies that the results on IPPA applied to the dual function are applicable to IALM.
\Cref{prop:sk_convergence} yields the convergence of \(s^k\), where \Cref{lem:set_inclusions} is used to ensure that the definition of \(\sBar = \argmin_{s\in \cl(\dom(\partial h))} \Vert s \Vert \) corresponds to the definition \( \sBar = \argmin_{s\in \cl(\Scal)} \Vert s \Vert\).
 \Cref{thm:convergence_h_s_k_to_h_s_bar} and \Cref{prop:h*(sk) close to f(xk)} allow us to conclude that \(f(x^k) \to h^*(\sBar)\), with the desired convergence rates when \(h^*(\sBar) < \infty\).

Let us now show that \( \left\Vert  y^k  - \Proj_\KK ( C(x^k) ) \right\Vert \to 0 \).
We have
\begin{align}
    \left\Vert C(x^k) - \Proj_\KK(C(x^k)) - \sBar \right\Vert^2 &= \begin{multlined}[t] \left\Vert C(x^k) - \Proj_\KK(C(x^k)) \right\Vert^2 \\ - 2 \inner{\sBar}{C(x^k) - \Proj_\KK(C(x^k))} + \left\Vert \sBar  \right\Vert^2 \end{multlined} \nonumber \\
    & \leq \left\Vert C(x^k) - \Proj_\KK(C(x^k)) \right\Vert^2  - \left\Vert \sBar  \right\Vert^2 \nonumber \\
    & \added{ (\text{Using } C(x^k) - \Proj_\KK(C(x^k)) \in \Scal } \nonumber \\
    & \added{ \text{ so } \inner{ \sBar - 0}{C(x^k) - \Proj_\KK(C(x^k)) - \sBar} \geq 0) } \nonumber \\
    & \leq \left\Vert C(x^k) - y^k \right\Vert^2  - \left\Vert \sBar  \right\Vert^2 \nonumber \\
    & \added{ (\text{because } y^k \in \KK ) } \nonumber \\
    & \leq \left\Vert s^k \right\Vert^2  - \left\Vert \sBar  \right\Vert^2 . \label{eq:constraint transgression convergence}
\end{align}
And also
\begin{align}
    \left\Vert y^k - \Proj_\KK(C(x^k)) \right\Vert^2 &= \left\Vert y^k - C(x^k) - \sBar \right\Vert^2 + \left\Vert C(x^k) - \Proj_\KK(C(x^k)) -\sBar \right\Vert^2 \nonumber \\
    &  \quad \quad - 2 \inner{y^k - C(x^k) - \sBar}{C(x^k) - \Proj_\KK(C(x^k)) -\sBar} \nonumber \\
    & \leq \left( \, \left\Vert y^k - C(x^k) - \sBar \right\Vert +  \left\Vert C(x^k) - \Proj_\KK(C(x^k)) -\sBar \right\Vert \, \right)^2 \nonumber \\
    & \leq \left( \, \left\Vert s^k - \sBar \right\Vert +  \left\Vert C(x^k) - \Proj_\KK(C(x^k)) -\sBar \right\Vert \, \right)^2 \nonumber \\
    & \leq \left( \, \left\Vert s^k - \sBar \right\Vert +  \sqrt{\left\Vert s^k \right\Vert^2  - \left\Vert \sBar  \right\Vert^2} \, \right)^2, \label{eq:constraint transgression convergence 2} 
\end{align}
where the last inequality follows from \eqref{eq:constraint transgression convergence}.
Using \(  \left\Vert s^k - \sBar \right\Vert \to 0\) and \( \left\Vert s^k \right\Vert^2  - \left\Vert \sBar  \right\Vert^2 \to 0\) (see \Cref{prop:sk_convergence}), it follows that \(\left\Vert y^k - \Proj_\KK(C(x^k)) \right\Vert \to 0\).

Furthermore, if \(h^*(\sBar) < \infty \), \Cref{prop:sk_convergence} guarantees that 
\[ \left\Vert s^k - \sBar \right\Vert = O\left(\tfrac{1}{\sqrt{\sum_{i=\changed{0}}^{\changed{k-1}} \gamma_i}} \right)
\text{ and } \left\Vert s^k \right\Vert^2  - \left\Vert \sBar  \right\Vert^2 = O\left(\tfrac{1}{\sum_{i=\changed{0}}^{\changed{k-1}} \gamma_i} \right) \]
 which with \eqref{eq:constraint transgression convergence 2} implies that
\(\left\Vert y^k - \Proj_\KK(C(x^k)) \right\Vert^2 = O\left(\tfrac{1}{\sum_{i=\changed{0}}^{\changed{k-1}} \gamma_i} \right) \) thereby reaching the desired conclusion. 
\end{proof}

The limit of the iterates of IALM, however, can correspond to the closest feasible problem as the following corollary highlights. 

\begin{corollary} \label{thm: nu lsc implies convergence}
    Let \( (x^k, y^k, \lambda^k, s^k)_{k \in \N} \) be a sequence generated by an IALM (\Cref{algo:IALM}) associated with Problem \eqref{eq:convex_min_problem} 
    \changed{with errors \((\varepsilon_k)_{k\in\N^*}\) and penalty parameters \((\gamma_k)_{k\in\N}\) satisfying \Cref{assumption:errors_and_step_sizes}}. 
    The algorithm simply converges to the closest feasible problem in the sense of \Cref{def:convergence_closest_feasible} if and only if the value function \(\nu\) is lower-semicontinuous and finite at \(\sBar\).
    If furthermore the closest feasible problem has finite value (i.e., \(\nu(\sBar) < \infty\)), then the algorithm semi-quantitatively converges to the closest feasible problem in the sense of \Cref{def:convergence_closest_feasible}. 
\end{corollary}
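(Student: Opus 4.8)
The plan is to reduce the statement to the conjugacy identity $h^{*}=\cl(\nu)$ together with \Cref{thm:contraint transgression and value convergence}, which already does the analytic work. First I would record the elementary dictionary between $\nu$ and $h^{*}$: since $h^{*}$ is the closure of the convex function $\nu$, one always has $h^{*}(\sBar)\leq \nu(\sBar)$, and for a proper convex function equality $h^{*}(\sBar)=\nu(\sBar)$ holds exactly when $\nu$ is lower semicontinuous at $\sBar$ (this is also \Cref{prop:nu properties}, read as strong duality of the $\sBar$-shifted problem, and it is meaningful here because $\sBar\in\cl(\dom\nu)$ by \Cref{lem:set_inclusions} and $\Scal=\dom\nu$). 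Hence ``$\nu$ lower semicontinuous and finite at $\sBar$'' is equivalent to ``$h^{*}(\sBar)=\nu(\sBar)<\infty$''. Next I would invoke \Cref{thm:contraint transgression and value convergence}, whose hypotheses \ref{(H1)}, \ref{(H2a)}, \ref{(H3)}, \ref{(H4)}, \ref{(H5)} are precisely those assumed here: it gives, \emph{unconditionally}, two of the three requirements for simple convergence, namely $C(x^{k})-y^{k}\to\sBar$ and $\lVert y^{k}-\Proj_{\KK}(C(x^{k}))\rVert\to 0$, plus $f(x^{k})\to h^{*}(\sBar)$. So the entire content of the corollary is whether $f(x^{k})\to\nu(\sBar)$, i.e. whether $h^{*}(\sBar)=\nu(\sBar)$ and is finite.

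For the ``if'' direction I would assume $\nu$ is lower semicontinuous and finite at $\sBar$, so that $h^{*}(\sBar)=\nu(\sBar)<\infty$; then $f(x^{k})\to h^{*}(\sBar)=\nu(\sBar)$ and, combined with the two limits above, IALM simply converges to the closest feasible problem in the sense of \Cref{def:convergence_closest_feasible}. Moreover, $h^{*}(\sBar)<\infty$ activates the ``furthermore'' clause of \Cref{thm:contraint transgression and value convergence}, yielding $\lVert C(x^{k})-y^{k}-\sBar\rVert=O(1/\sqrt{\sum_{i=1}^{k}\gamma_{i}})$ and $\lVert y^{k}-\Proj_{\KK}(C(x^{k}))\rVert=O(1/\sqrt{\sum_{i=1}^{k}\gamma_{i}})$, which is exactly semi-quantitative convergence. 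For the ``only if'' direction I would assume IALM simply converges, so $f(x^{k})$ converges to the (finite) value $\nu(\sBar)$; since \Cref{thm:contraint transgression and value convergence} also gives $f(x^{k})\to h^{*}(\sBar)$, uniqueness of limits forces $h^{*}(\sBar)=\nu(\sBar)<\infty$, which by the first paragraph means exactly that $\nu$ is lower semicontinuous and finite at $\sBar$.

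The only delicate point — and what I expect to be the main obstacle to a fully watertight write-up — is the degenerate case $h^{*}(\sBar)=+\infty$ (equivalently, $\nu$ not lower semicontinuous at $\sBar$ or not finite there): then \Cref{thm:contraint transgression and value convergence} forces $f(x^{k})\to+\infty$, so one must be explicit that the statement ``$f(x^{k})\to\nu(\sBar)$'' in \Cref{def:convergence_closest_feasible} is understood as convergence to a real number, so that this case is genuinely excluded from ``simple convergence'' and the stated equivalence is not broken by it. Once that convention is pinned down, the remainder is a direct citation of \Cref{thm:contraint transgression and value convergence} and of the relation $h^{*}=\cl(\nu)$, with no further computation required.
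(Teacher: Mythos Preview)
Your proposal is correct and follows essentially the same route as the paper: the paper's entire proof is the one-line observation that $\nu$ is lower-semicontinuous at $\sBar$ if and only if $h^{*}(\sBar)=\nu(\sBar)$, after which \Cref{thm:contraint transgression and value convergence} does everything. Your write-up simply unpacks this, treating the ``if'' and ``only if'' directions separately and flagging the $h^{*}(\sBar)=+\infty$ edge case that the paper leaves implicit; no different idea is involved.
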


\begin{proof}
\(\nu\) is lower-semicontinuous at \(\sBar\) if and only if \(h^*(\sBar)\) = \(\nu(\sBar)\) and \Cref{thm:contraint transgression and value convergence} provides the desired result.
\end{proof}

\changed{The convergence to the closest feasible problem defined in \Cref{def:convergence_closest_feasible} says nothing about the behavior of the sequence \( (x_k)_{k\in \N}\), 
the following \Cref{prop:weak_accumulation_points_in_Sol_sBar} and \Cref{thm:boundedness of the xk implies convergence} explain what can be said about it. }

\changed{
\begin{proposition}  \label{prop:weak_accumulation_points_in_Sol_sBar}
    Let \( ((x^k, y^k, \lambda^k, s^k ))_{k \in \N} \) be a sequence generated by an IALM (\Cref{algo:IALM}) associated with Problem \eqref{eq:convex_min_problem}. 
    Any weak accumulation point of \((x^k)_{k\in\N}\) is in the solution set \(\mathrm{Sol}_{\sBar}\) of the closest feasible Problem \eqref{eq:bilevel optimization problem}.
\end{proposition}
\begin{proof}
    \Cref{thm:contraint transgression and value convergence} states that \(f(x^k)\) tends to a specific limit \(h^*(\sBar)\) in \( \R \cup \{+\infty\} \).
    Suppose that there exists a weak accumulation point \(\tilde x\) of \( (x^k)_{k\in\N} \). \(\X\) is closed and convex, so it is weakly closed and hence \(\tilde x\) must be in \(\X\).
    Let us define \( (x^{\phi(k)})_{k\in \N}\) a subsequence that converges weakly to \(\tilde x \in \X\).
    By \Cref{prop:sk_convergence}, we also have \( s^{\phi(k)} \to \sBar \), therefore the sequence \( (x^{\phi(k)}, s^{\phi(k)})_{k\in \N} \) weakly converges to \( (\tilde x, \sBar) \in \X \times \Y \) .
    \(\CC\) is closed and convex, so it is weakly closed, and since for all \( k \in \N \), \( (x^{\phi(k)}, s^{\phi(k)}) \in \CC \), we have \( (\tilde x, \sBar) \in \CC\).
    This in turn implies that \( f(\tilde x) \geq \nu(\sBar)\) by definition of \(\nu(\sBar)\). 
    But \(f\) is lower-semicontinuous so we also have the inequality at the limit \(f(\tilde x ) \leq \lim f(x^{\phi(k)}) = h^*(\sBar) \leq \nu(\sBar)\). 
    Those two inequalities imply the equality \( \nu(\sBar) = f(\tilde x) =  h^*(\sBar) \) which shows that, if there exists at least one accumulation point, \(\nu\) is lower-semicontinuous at \(\sBar\) 
    (this will be used in \Cref{thm:boundedness of the xk implies convergence}).
    \( \nu(\sBar) = f(\tilde x) \) also means that \( \tilde x \in \mathrm{Sol}_{\sBar}\). 
    Since \( \tilde x\) was chosen arbitrarily in the set of weak accumulation point of \( (x^k)_{k\in\N} \), we obtain that all of its weak accumulation points are in \( \mathrm{Sol}_{\sBar} \).
\end{proof}
} 

The following \Cref{thm:boundedness of the xk implies convergence} shows that the convergence to the closest feasible problem can only fail when the iterates \((x^k)_{k\in\N}\) diverge.
It can also be used to justify a heuristic, namely if the iterates \((x^k)_{k\in\N}\) are deemed to have converged, which indicates convergence to the closest feasible problem.

\changed{
\begin{proposition} \label{thm:boundedness of the xk implies convergence}
    Let \( (x^k, y^k, \lambda^k, s^k)_{k \in \N} \) be a sequence generated by an IALM (\Cref{algo:IALM}) associated with Problem \eqref{eq:convex_min_problem}
    with errors \((\varepsilon_k)_{k\in\N^*}\) and penalty parameters \((\gamma_k)_{k\in\N}\) satisfying \Cref{assumption:errors_and_step_sizes}.
     If the sequence \((x^k)_{k\in\N}\) is bounded:
    \begin{itemize}
        \item   \(\mathrm{Sol}_{\sBar}\) is nonempty and we have \( \mathrm{dist}(x^k, \mathrm{Sol}_{\sBar}) \to 0 \),
        \item \(\nu\) is lower-semicontinuous at \(\sBar\) and we have simple or semi-quantitative convergence to the closest feasible problem by applying \Cref{thm: nu lsc implies convergence}.
    \end{itemize}
\end{proposition}
\begin{proof}
    By hypothesis \((x^k)_{k\in\N}\) is bounded so it has at least one weak accumulation point.
    Therefore, according to \Cref{prop:weak_accumulation_points_in_Sol_sBar}, \(\mathrm{Sol}_{\sBar}\) is nonempty and contains all the weak accumulation points of \( (x^k)_{k\in\N} \).
   Finally, the boundedness of \( (x^k)_{k\in\N} \) implies that its distance to the set of its accumulation points tends to \(0\), in particular we have \( \mathrm{dist}(x^k, \mathrm{Sol}_{\sBar}) \to 0 \).
    As noted in the proof of \Cref{prop:weak_accumulation_points_in_Sol_sBar}, if there is at least one accumulation point of \( (x^k)_{k\in\N} \), then \(\nu\) is lower-semicontinuous.
\end{proof}
}

\deleted{We illustrate how this theorem is applicable for semidefinite programming in \Cref{example:SDP}.}
\changed{Verifying the hypothesis in \Cref{thm:boundedness of the xk implies convergence} or \Cref{thm: nu lsc implies convergence} on a given problem can be difficult in practice. 
The following \Cref{thm:level boundedness locally uniformly implies lsc} provides a more convenient set of sufficient assumptions to obtain convergence to the closest feasible problem. 
It depends on two notions which we define before stating \Cref{thm:level boundedness locally uniformly implies lsc}.}

\changed{
\begin{definition} \label{def:level_boundedness_locally_uniformly}
    The function \(f\) is said to be level bounded with respect to constraint perturbations if for every \(s \in \Y\), there exists a neighborhood \(V_s\) of \(s\) such that \(f\)
     is level bounded when restricted to the set \( \{ x \in \X \mid C(x) \in \KK + V_s \} \).
\end{definition}
} 

\changed{
    In practice, level boundedness with respect to constraint perturbations can be obtained from sufficient conditions such as having \(f\) level bounded
     or having the set \( \{ x \in \X \mid \mathrm{dist}(C(x), \KK) \leq R \} \) itself bounded for all \(R \geq 0\).
      In the finite-dimensional setting, this condition can be equivalently stated in terms of recession directions.
      Since the constraint set might be empty, we formalize a (natural) generalization of the notion of recession direction for the constraint.
}

\begin{definition} \label{def:def_recession_dir_constraints}
    Suppose that \(\X\) and \(\Y\) are finite-dimensional. Consider the constraints of Problem \eqref{eq:convex_min_problem} written as \(C(x) \in \KK\).
     The recession cone of the constraints (or recession directions of the constraints) \(\D\) 
     is defined as 
\begin{equation*}
    \D = \{ d \; | \; \exists s \in \Y, \exists x \in \X , \forall t \leq 0 , C(x + td)  \in \KK + s\}
\end{equation*}
\end{definition}

\changed{
In \Cref{example:ALM_for_inequality_constrained_convex_optimization} for instance,
the recession directions of the constraints are simply the recession directions in common to all functions \( c_1 , \dots, c_m\).}
    
\begin{proposition} \label{prop:rec_dir_imply_level_bounded}
    \changed{
    If \(\X\) and \(\Y\) are finite-dimensional, then the objective function \(f\) has no recession direction in common with the constraints (as defined in \Cref{def:def_recession_dir_constraints}) if
    and only if \(f\) is level bounded with respect to constraint perturbations (as defined in \Cref{def:level_boundedness_locally_uniformly}). }
\end{proposition}

\begin{proof}
In \Cref{def:level_boundedness_locally_uniformly}, \(V_s\) can trivially be taken bounded with
no loss of generality. 
Consider the set \( \{ x \in \X \mid C(x) \in \KK + V_s \} \) for some bounded neighborhood \(V_s\) of some point \(s\in \Scal\).
The recession directions of that set are exactly the recession directions of the set 
\(\CC + (0, V_s)\) of the form \((d,0)\) (this follows directly from the definition of \(\CC + (0, V_s)\)). 
But since \(V_s\) is bounded those correspond to the recession directions of \(\CC \) of the form \((d,0)\).
Likewise, for any given \(\tilde s \in \Scal\), the recession directions of \(\{ d \; | \; \exists s \in \Y, \exists x \in \X , \forall t \leq 0 , C(x + td)  \in \KK + \tilde s\}\)
are also exactly the recession directions of \(\CC\) of the form \((d,0)\) (by definition of \(\CC\)).
We have just shown that the recession directions of  \( \{ x \in \X \mid C(x) \in \KK + V_s \} \) are exactly the 
recession directions of the constraints.
Therefore, in finite dimension, \(f\) is level bounded on \( \{ x \in \X \mid C(x) \in \KK + V_s \} \) if and only
if it shares no recession direction in common with the constraints. 
\end{proof}

\begin{theorem} \label{thm:level boundedness locally uniformly implies lsc}
Let \( (x^k, y^k, \lambda^k, s^k)_{k \in \N} \) be a sequence generated by an IALM (\Cref{algo:IALM}) associated with Problem \eqref{eq:convex_min_problem} 
\changed{with errors \((\varepsilon_k)_{k\in\N^*}\) and penalty parameters \((\gamma_k)_{k\in\N}\) satisfying \Cref{assumption:errors_and_step_sizes}}.
If the closest feasible problem has finite value (i.e., \(\nu(\sBar) < \infty\)), and either 
\begin{itemize}
    \item \(f\) is level bounded with respect to constraint perturbations (see \Cref{def:level_boundedness_locally_uniformly})
    \item or \(\X\) and \(\Y\) are finite-dimensional and \(f\) has no recession direction in common with the constraints,
\end{itemize}
then :
\begin{itemize}
    \item the algorithm semi-quantitatively converges to the closest feasible problem in the sense of \Cref{def:convergence_closest_feasible},
    \item  \added{\(\mathrm{Sol}_{\sBar} \), the solution set of closest feasible Problem \eqref{eq:bilevel optimization problem}, is nonempty and bounded and we have \( \mathrm{dist}(x^k, \mathrm{Sol}_{\sBar}) \to 0 \)}
\end{itemize}

\end{theorem}

\begin{proof}
As noted in \Cref{prop:rec_dir_imply_level_bounded}, the condition on recession directions implies that \(f\) is level bounded with respect to constraint perturbations. 

Let \(V_{\sBar}\) be a neighborhood of \(\sBar\) such that \(f\) is level bounded on \( \{ x \in \X \mid C(x) \in \KK + V_{\sBar} \} \). Since \(s^k \to \sBar\), for \(k\) sufficiently large we have \( s^k \in V_{\sBar}\).
Since \(f(x^k) \to h^*(\sBar) \leq \nu(\sBar) < \infty\), the sequence \((f(x^k))_{k\in\N}\) is upper-bounded. Therefore, for \(k\) sufficiently large, the sequence \((x^k)_{k \in \N}\) stays in 
a sublevel set of \(f\) on the set \( \{ x \in \X \mid C(x) \in \KK + V_{\sBar} \} \) which implies that \((x^k)_{k \in \N}\) is bounded.
We can apply \Cref{thm:boundedness of the xk implies convergence},
along with the fact that we assumed that the closest feasible problem has finite value
 to obtain the semi-quantitative convergence and \( \mathrm{dist}(x^k, \mathrm{Sol}_{\sBar}) \to 0 \).
\end{proof}

\deleted{
The condition of level boundedness locally uniformly can seem difficult to verify, it can however simply be obtained from sufficient conditions such as having \(f\) level bounded or having the shifted constraints always bounded by a term that depends continuously on the norm of the shift. These two cases are probably the most suitable for applying \Cref{thm:level boundedness locally uniformly implies lsc} in practice. 
}


\subsection{Quantitative convergence to the closest feasible problem} \label{sec:quantitative convergence to the closest feasible problem}

When \(\nu\) is not only lower-semicontinuous but also subdifferentiable at \(\sBar\), then quantitative convergence to the closest feasible problem can be ensured.

\begin{theorem} \label{thm: nu subdifferentiable implies convergence}
    Let \( (x^k, y^k, \lambda^k, s^k)_{k \in \N} \) be a sequence generated by an IALM (\Cref{algo:IALM}) associated with Problem \eqref{eq:convex_min_problem} 
    \changed{with errors \((\varepsilon_k)_{k\in\N^*}\) and penalty parameters \((\gamma_k)_{k\in\N}\) satisfying \Cref{assumption:errors_and_step_sizes}}.
     If the value function \(\nu\) is subdifferentiable at \(\sBar\) then the algorithm converges quantitatively to the closest feasible problem in the sense of \Cref{def:convergence_closest_feasible}. 
\end{theorem}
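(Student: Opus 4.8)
The plan is to reduce the statement to the dual-side results of \Cref{sec:IPPA} through the IALM-to-IPPA correspondence (\Cref{prop:approx aug Lagrangian is approx prox point}) and the identity $h^* = \cl\nu$. The first step is to transfer subdifferentiability from $\nu$ to $h^*$: a proper convex function agrees with its closure at any point where it admits a subgradient, so $\partial\nu(\sBar)\neq\emptyset$ forces $\nu(\sBar) = (\cl\nu)(\sBar) = h^*(\sBar) < \infty$ and $\partial h^*(\sBar) = \partial(\cl\nu)(\sBar) \supseteq \partial\nu(\sBar) \neq \emptyset$. Hence $h^*$ is subdifferentiable at $\sBar$ and $\nu$ is lower-semicontinuous and finite at $\sBar$; equivalently, by \Cref{prop:equivalent_formulations_g_Star_subdifferentiable}, $\sBar \in \range(\partial h)$.

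Next I would obtain simple and semi-quantitative convergence. Since $\nu$ is lsc and finite at $\sBar$, and the errors satisfy \ref{(H1)}, \ref{(H2a)} (which follows from \ref{(H2b)}), \ref{(H3)}, \ref{(H4)} while $(\gamma_k)$ satisfies \ref{(H5)}, \Cref{thm: nu lsc implies convergence} applies: the algorithm simply converges to the closest feasible problem and, since $\nu(\sBar) < \infty$, also semi-quantitatively converges in the sense of \Cref{def:convergence_closest_feasible}. In particular $f(x^k) \to \nu(\sBar)$, and $\left\Vert C(x^k) - y^k - \sBar\right\Vert$ and $\left\Vert y^k - \Proj_\KK(C(x^k))\right\Vert$ are both $O\left(\tfrac{1}{\sqrt{\sum_{i=1}^k\gamma_i}}\right)$.

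It then remains to establish the rate $\left\vert f(x^k) - \nu(\sBar)\right\vert = O\left(\tfrac{1}{\sqrt{\sum_{i=1}^k\gamma_i}}\right)$. By \Cref{lem:IPPA_properties} (using \ref{(H1)}, \ref{(H3)}, \ref{(H5)}) the sequence $(s^k)$ is bounded, so \Cref{prop:h*(sk) close to f(xk)} applies, and because the errors satisfy the stronger assumption \ref{(H2b)} it yields $\left\vert f(x^{k+1}) - h^*(s_\star^{k+1})\right\vert = O\left(\tfrac{1}{\sqrt{\sum_{i=0}^k\gamma_i}}\right)$. Since $h^*$ is subdifferentiable at $\sBar$, \Cref{thm:convergence rate h^*(s^k) - h^*(sBAR)} gives $\left\vert h^*(s_\star^{k+1}) - h^*(\sBar)\right\vert = O\left(\tfrac{1}{\sqrt{\sum_{i=0}^{k+1}\gamma_i}}\right)$. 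Adding these two bounds via the triangle inequality and using $h^*(\sBar) = \nu(\sBar)$ from the first step gives $\left\vert f(x^{k+1}) - \nu(\sBar)\right\vert = O\left(\tfrac{1}{\sqrt{\sum_{i=1}^k\gamma_i}}\right)$, which is exactly the extra estimate needed for quantitative convergence.

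The only genuinely delicate point is the first step: checking that subdifferentiability of $\nu$ at $\sBar$ really transfers to $h^* = \cl\nu$ and, crucially, that $h^*(\sBar) = \nu(\sBar)$, so that the dual-side rate can be read as a statement about $f(x^k) - \nu(\sBar)$. Everything else is bookkeeping assembling estimates already proven in \Cref{sec:IPPA}; the one thing to watch is that the hypotheses of each invoked result follow from \ref{(H1)}, \ref{(H2b)}, \ref{(H3)}, \ref{(H4)}, \ref{(H5)} --- in particular that \ref{(H2b)} implies \ref{(H2a)} and is precisely the strengthening that upgrades the qualitative conclusion of \Cref{prop:h*(sk) close to f(xk)} to a convergence rate, and that the harmless index shift between $s^k$ and $s_\star^k$ does not affect the $O\left(\tfrac{1}{\sqrt{\sum_{i=1}^k\gamma_i}}\right)$ order.
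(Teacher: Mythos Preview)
Your proposal is correct and follows essentially the same route as the paper: invoke \Cref{thm: nu lsc implies convergence} for semi-quantitative convergence, then combine the rates from \Cref{prop:h*(sk) close to f(xk)} (under \ref{(H2b)}) and \Cref{thm:convergence rate h^*(s^k) - h^*(sBAR)} via the triangle inequality, using $h^*(\sBar)=\nu(\sBar)$. You are in fact slightly more careful than the paper in spelling out why subdifferentiability of $\nu$ at $\sBar$ transfers to $h^*=\cl\nu$ and why \ref{(H2b)} suffices where \ref{(H2a)} is required.
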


\begin{proof}
Using \Cref{prop:approx aug Lagrangian is approx prox point} we can apply results of the IPPA on the dual function \(h\) to our IALM iterates.
Semi-quantitative convergence follows from \Cref{thm: nu lsc implies convergence} as subdifferentiability implies \(\nu(\sBar)\) is finite. 
\Cref{thm:convergence rate h^*(s^k) - h^*(sBAR)} states that \( | h^*(s_\star^k) - h^*(\sBar) | = O\left(\tfrac{1}{\sqrt{\sum_{i=\changed{0}}^{\changed{k-1}} \gamma_i}} \right)\) 
and \Cref{prop:h*(sk) close to f(xk)} with \ref{(A2)} states that \( | h^*(s_\star^k) - f(x^k) | = O\left(\tfrac{1}{\sqrt{\sum_{i=\changed{0}}^{\changed{k-1}} \gamma_i}} \right)\). 
Since subdifferentiability of \(\nu\) at \(\sBar\) implies it is lsc and finite there, \(h^*(\sBar) = \nu(\sBar)\).
 Thus, by triangle inequality, we get the result \( \left\vert f(x^k) - \nu(\sBar) \right\vert = O\left(\tfrac{1}{\sqrt{\sum_{i=\changed{0}}^{\changed{k-1}} \gamma_i}} \right) \) which proves quantitative convergence to the closest feasible problem.
\end{proof}

Showing the subdifferentiability of the value function \(\nu\) can itself be a difficult task, see \Cref{sec:related_works} for pointers to works on the analytic study of the value function. The following corollary provides a sufficient condition for the subdifferentiability of \(\nu\) in the case of polyhedral constraints.

\begin{corollary} \label{cor:polhedral_constraints_Lipschitz_objective_imply_subdifferentiability_nu}
Let \( (x^k, y^k, \lambda^k, s^k)_{k \in \N} \) be a sequence generated by an IALM (\Cref{algo:IALM}) associated with Problem \eqref{eq:convex_min_problem} 
\changed{with errors \((\varepsilon_k)_{k\in\N^*}\) and penalty parameters \((\gamma_k)_{k\in\N}\) satisfying \Cref{assumption:errors_and_step_sizes}}.
If the mapping \(C\) and the sets \(\KK\) and \(\X\) are polyhedral and the objective function \(f\) is Lipschitz continuous, then the algorithm converges quantitatively to the closest feasible problem in the sense of \Cref{def:convergence_closest_feasible}.
\end{corollary}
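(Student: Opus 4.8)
The plan is to obtain the corollary as a special case of \Cref{thm: nu subdifferentiable implies convergence}: the hypotheses on the penalty parameters \((\gamma_k)_{k\in\N}\) and on the errors \((\varepsilon_k)_{k\in\N}\) are exactly those of that theorem, so the only thing left to prove is that, under polyhedrality of \(C\), \(\KK\) and \(\X\) together with Lipschitz continuity of \(f\), the value function \(\nu\) of the shifted problem \eqref{eq:convex_min_problem_shifted} is subdifferentiable at \(\sBar\).

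First I would settle finiteness of \(\nu(\sBar)\). Polyhedrality of \(C\), \(\KK\) and \(\X\) makes the set \(\CC\) of \eqref{def:CC} polyhedral, and the image of a polyhedral set under the continuous linear map \((x,s)\mapsto s\) is again polyhedral; since this image is exactly \(\Scal = \dom(\nu)\), the set \(\Scal\) is closed. Hence \(\sBar \in \cl(\Scal) = \Scal = \dom(\nu)\), so \(\nu(\sBar) < +\infty\) and the \(\sBar\)-shifted problem \eqref{eq:convex_min_problem_shifted} is feasible.

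Next I would invoke the polyhedral constraint qualification. The \(\sBar\)-shifted problem is a convex program whose constraints, namely \(x \in \X\) and \(C(x) \in \KK + \sBar\), are polyhedral, while its objective \(f\), being Lipschitz, is finite and continuous; treating \(x\in\X\) as one of the polyhedral constraints, the effective domain of the objective is the whole space, so the refined duality theorem for polyhedral constraints applies with feasibility as the sole constraint qualification --- and feasibility was just established. This classical result (see, e.g., \cite{rockafellar1970convex} or \cite{bonnans2000perturbation}) gives strong duality for the \(\sBar\)-shifted problem together with nonemptiness of its set of Kuhn--Tucker vectors. By \Cref{prop:nu properties}, this is precisely the assertion that \(\nu\) is subdifferentiable at \(\sBar\), and \Cref{thm: nu subdifferentiable implies convergence} then delivers quantitative convergence to the closest feasible problem.

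The delicate point is the use of the polyhedral constraint qualification, especially if one wants the statement to cover infinite-dimensional \(\X\) and \(\Y\): one must fix a notion of ``polyhedral'' (finitely generated being the natural one) that is preserved under the linear projection defining \(\Scal\), so that \(\Scal\) is closed and \(\sBar\in\dom(\nu)\), and for which the polyhedral duality theorem holds with no interiority requirement beyond feasibility. The Lipschitz hypothesis on \(f\) is exactly what trivializes this last requirement, by forcing the objective's effective domain to be the whole space; it is this role of \(f\) that prevents the corollary from being deduced, instead, from the level-boundedness route of \Cref{thm:level boundedness locally uniformly implies lsc}, a Lipschitz \(f\) being in general not level-bounded.
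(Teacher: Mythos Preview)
Your argument is correct and reaches the conclusion by the same reduction to \Cref{thm: nu subdifferentiable implies convergence}, but the way you establish subdifferentiability of \(\nu\) at \(\sBar\) is genuinely different from the paper's. The paper does not invoke polyhedral duality; instead it proves the stronger statement that \(\nu\) is globally Lipschitz on its domain. It argues that the set-valued map \(s\mapsto \CC_s=\{x:(x,s)\in\CC\}\) is Lipschitz in the Hausdorff distance (reducing to finite dimensions and appealing to \cite[Exercise 9.35]{rockafellar2009variational}), and then propagates this to \(\nu\) via the Lipschitz constant \(L_f\) of \(f\): for \(x_r\in\CC_r\) one bounds \(\nu(s)-f(x_r)\le L_f\,\Delta(\CC_s,\CC_r)\le L_f\kappa\|s-r\|\), whence \(|\nu(s)-\nu(r)|\le L_f\kappa\|s-r\|\). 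Membership \(\sBar\in\dom(\nu)\) is obtained exactly as you do, from closedness of \(\Scal\).

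Your route through \Cref{prop:nu properties} and the polyhedral refinement of Slater's condition is more direct and avoids the Hausdorff-distance machinery entirely; the Lipschitz hypothesis on \(f\) is used only to make \(\dom f\) the whole space so that the polyhedral constraint qualification reduces to bare feasibility. The paper's longer argument, on the other hand, yields a reusable intermediate conclusion (\(\nu\) Lipschitz), which is precisely what is leveraged in the very next result, \Cref{thm:polyhedral_constraints_levelbounded_imply_nu_subdiff}: there \(f\) is only Lipschitz on bounded sets, and the proof replaces \(f\) by a globally Lipschitz extension \(\tilde f\) and reapplies the present corollary verbatim. Your approach would serve that purpose just as well, since subdifferentiability of the value function for \(\tilde f\) at \(\sBar\) is all that is needed. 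Your caveat about the infinite-dimensional meaning of ``polyhedral'' is apt; the paper handles the same issue by an explicit reduction to the finite-dimensional span of the defining functionals.
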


\begin{proof}
This proof consists of demonstrating that the value function \(\nu\) is Lipschitz continuous, and therefore subdifferentiable, under the given conditions.

The Hausdorff distance \(\Delta (A,B)\) between two sets \(A\) and \(B\) is defined as:
\[
    \Delta (A,B) = \max \left\{ \sup_{a\in A} \inf_{b\in B} \left\Vert a - b \right\Vert , \sup_{b\in B} \inf_{a\in A} \left\Vert a - b \right\Vert \right\}.
\]\deleted{
Our first step is to show that the set-valued mapping \( s \mapsto \CC_{s} = \{ x \in \X \mid (x,s)\in \CC \}\) is Lipschitz continuous with respect to the Hausdorff distance. This means there exists a constant \(\kappa > 0\) such that for any shifts \(s, r \in \Y\):
\begin{equation*}
    \Delta ( \CC_{s}, \CC_{r} ) \leq \kappa \left\Vert s - r \right\Vert
\end{equation*}
Given that the mapping \(C\) and the sets \(\X\) and \(\KK\) are polyhedral, the set \(\CC = \{ (x,s) \in \X \times \Y \mid s \in C(x) - \KK \}\) is also polyhedral. This implies that \(\CC_s\) can be described as the intersection of a finite number of half-spaces as  \(\CC_{s} = \{ x \in \X \mid \langle a_i, x \rangle + \langle b_i , s \rangle \leq c_i , \text{ for } i = 1, \ldots, n \} \) for some \( (a_1 , \dots , a_n) \) in the ambient space of \(\X\) and \( (b_1 , \dots, b_n) \in \Y^n\) and \( (c_1,  \dots, c_n) \in \R^n\). 
We now explain why it suffices to consider the case where \(\X\) and \(\Y\) are finite-dimensional to show the Lipschitz continuity of \( s \mapsto \CC_{s}\).
In the definition \(\CC_{s} = \{ x \in \X \mid \langle a_i, x \rangle + \langle b_i , s \rangle \leq c_i , \text{ for } i = 1, \ldots, n \} \),
any vector \(\tilde x \) orthogonal to the vectors \( a_1, a_2, \ldots, a_n\) can be added to or subtracted from \(x\) without playing any role at all,
therefore we can safely ignore these components and consider the subspace of \(\X\) generated by \( a_1, a_2, \ldots, a_n\) instead of the entire space \(\X\).
Similarly for \(s\), in the definition of \(\CC_{s} = \{ x \in \X \mid \langle a_i, x \rangle + \langle b_i , s \rangle \leq c_i , \text{ for } i = 1, \ldots, n \} \) any vector \(\tilde s \) orthogonal to the vectors \( b_1, b_2, \ldots, b_n\) can be added to or subtracted from \(s\) without playing any role at all. 
If the inequality \(\Delta ( \CC_{s}, \CC_{r} ) \leq \kappa \left\Vert s - r \right\Vert\) holds for any \(s, r\) in the vector space generated by \( b_1, b_2, \ldots, b_n\), then it holds for any \(s, r\) in \(\Y\) because \(\CC_{s}\) and \(\CC_{r}\) are unchanged by components of \(s, r\) that would be orthogonal to the vector space generated by \( b_1, b_2, \ldots, b_n\) and \(\left\Vert s - r \right\Vert\) can only get larger by adding such components. Therefore, considering the finite-dimensional case is sufficient to show the Lipschitz continuity of \( s \mapsto \CC_{s}\).
The finite-dimensional case is treated in \cite[Example 9.35]{rockafellar2009variational}. }
\cite[Theorem 2.207]{bonnans2000perturbation} states that the convex polyhedral multifunction \( s \mapsto \CC_{s} = \{ x \in \X \mid (x,s)\in \CC \}\) is Lipschitz continuous with respect to the Hausdorff distance.
Let \(\kappa > 0\) be a Lipschitz constant that verifies
\begin{equation} \label{eq:Hausforff_dist_is_lipschitz}
    \forall s, r \in \Scal, \; \Delta ( \CC_{s}, \CC_{r} ) \leq \kappa \left\Vert s - r \right\Vert.
\end{equation}
Let \(L_f\) be the Lipschitz constant of \(f\). For any \(x_s \in \CC_s\) and \(x_{r} \in \CC_r\), the Lipschitz continuity of \(f\) implies \( f(x_s) - f(x_r) \leq L_f \Vert x_s - x_r \Vert \).
By the definition of \(\nu(s) = \inf_{x \in \CC_s} f(x)\), we have \(\nu(s) \leq f(x_s)\). Thus,
\[
    \nu(s) - f(x_r) \leq L_f \Vert x_s - x_r \Vert .
\]
We can choose \(x_s \in \CC_s\) such that \(\Vert x_s - x_r \Vert = \inf_{x \in \CC_s} \Vert x - x_r \Vert\). By the definition of the Hausdorff distance, this infimum is less than or equal to \(\Delta(\CC_s, \CC_r)\). Therefore,
\[
    \nu(s) - f(x_r) \leq L_f \Delta ( \CC_{s}, \CC_{r} ).
\]
Since this holds for any \(x_r \in \CC_r\), we can take the infimum over \(x_r \in \CC_r\) on the left side:
\[
    \nu(s) - \inf_{x_r \in \CC_r} f(x_r) \leq L_f \Delta ( \CC_{s}, \CC_{r} ),
\]
which means
\[
    \nu(s) - \nu(r) \leq L_f \Delta ( \CC_{s}, \CC_{r} ).
\]
Combining this with the Lipschitz continuity of \(\CC_s\) (Equation \eqref{eq:Hausforff_dist_is_lipschitz}), we get:
\[ \nu(s) - \nu(r) \leq L_f \kappa \left\Vert s - r \right\Vert. \]
By symmetry, we can swap \(s\) and \(r\) to obtain \(\nu(r) - \nu(s) \leq L_f \kappa \left\Vert r - s \right\Vert\). Together, these imply:
\[ | \nu(s) - \nu(r) | \leq L_f \kappa \left\Vert s - r \right\Vert. \]
Thus, \(\nu\) is Lipschitz continuous on its domain, which implies that \(\nu\) is subdifferentiable on its domain. 
\(\sBar\) is clearly in the domain of \(\nu\) since the polyhedral constraints imply that there exists and \(\overline x \) such that \((\overline x , \sBar) \in \CC\) and \(f\) is finite on \(\X \) (since it is Lipschitz and proper) so \(\nu(\sBar) \leq f(\overline x) < \infty \), so \(\nu\) is subdifferentiable at \(\sBar\).  
\Cref{thm: nu subdifferentiable implies convergence} then provides the result.
\end{proof}

The Lipschitz continuity hypothesis for \(f\) is quite restrictive; the following corollary requires Lipschitzness only on bounded sets instead, to allow a larger family of objective functions (such as, for instance, some quadratic objective functions). 

\begin{corollary} \label{thm:polyhedral_constraints_levelbounded_imply_nu_subdiff} 
    Let \( (x^k, y^k, \lambda^k, s^k )_{k \in \N} \) be a sequence generated by an IALM (\Cref{algo:IALM}) associated with Problem \eqref{eq:convex_min_problem} 
    \changed{with errors \((\varepsilon_k)_{k\in\N^*}\) and penalty parameters \((\gamma_k)_{k\in\N}\) satisfying \Cref{assumption:errors_and_step_sizes}}. 
    If the mapping \(C\) and the sets \(\KK\) and \(\X\) are polyhedral, the objective function \(f\) is Lipschitz on any bounded set in \(\X\) and either 
    \begin{itemize}
        \item \(f\) is level bounded with respect to constraint perturbations (as defined in \Cref{def:level_boundedness_locally_uniformly}),
        \item or \(\X\) and \(\Y\) are finite-dimensional, and \(f\) has no recession direction in common with the constraints (as defined in \Cref{def:def_recession_dir_constraints}),
    \end{itemize}
    then:
    \begin{itemize}
        \item the algorithm converges quantitatively to the closest feasible problem in the sense of \Cref{def:convergence_closest_feasible}
        \item \(\mathrm{Sol}_{\sBar} \), the solution set of the bilevel optimization Problem \eqref{eq:bilevel optimization problem}, is bounded and \changed{\( \mathrm{dist}(x^k, \mathrm{Sol}_{\sBar}) \to 0 \).} 
    \end{itemize}
\end{corollary}

\begin{proof}
The proof is very similar to the proof of \Cref{cor:polhedral_constraints_Lipschitz_objective_imply_subdifferentiability_nu}.
 The only difference is that \(f\) is not assumed to be Lipschitz on the entire space but only on bounded sets.
 \((x^k)_{k\in\N}\) is bounded, which makes the Lipschitzness of \(f\) on bounded sets sufficient. 

\Cref{thm:level boundedness locally uniformly implies lsc} establishes the second claim, in particular the iterates \((x^k)_{k\in\N}\) are bounded.
 Let us call \( \mathcal{B}\) a bounded convex set included in \(\X\) which contains the entire sequence \((x^k)_{k\in\N}\). As described in \cite[Theorem 1]{cobzas1978norm}, we can construct a convex closed function \(\tilde f\) equal to \(f\) on \( \mathcal{B}\) and globally Lipschitz as :
\begin{equation*}
    \tilde f (x) = \inf_{ \tilde x \in \mathcal{B}} \; f(\tilde x) + L_{f,\mathcal{B}} \Vert \tilde x - x \Vert 
\end{equation*}
where \(L_{f,\mathcal{B}} \) is a Lipschitz constant of \(f\) on \(\mathcal{B}\). 
The iterates of IALM on \eqref{eq:convex_min_problem} are also iterates of IALM on the same problem where \(f\) has been replaced by \(\tilde f\), therefore \Cref{cor:polhedral_constraints_Lipschitz_objective_imply_subdifferentiability_nu} applies and provides the result.
\end{proof}  

\deleted{
\begin{corollary} \label{thm:recession_dir_polyhedral_constraints_imply_nu_subdiff}
    Let \( ((x^k, y^k, \lambda^k, s^k ))_{k \in \N} \) be a sequence generated by an IALM (\Cref{algo:IALM}) associated with Problem \eqref{eq:convex_min_problem} 
    \changed{with errors \((\varepsilon_k)_{k\in\N^*}\) and penalty parameters \((\gamma_k)_{k\in\N}\) satisfying \Cref{assumption:errors_and_step_sizes}}. 
    Suppose that the dimension of \(\X\) is finite.
    If the mapping \(C\) and the sets \(\KK\) and \(\X\) are polyhedral and the objective function \(f\) is finite everywhere on \(\X\) and has no recession direction in common with the constraints,
    then:
\begin{itemize}
    \item the algorithm converges quantitatively to the closest feasible problem in the sense of \Cref{def:convergence_closest_feasible}
    \item the sequence \((x^k)_{k\in\N}\) converges to the solution set of the bilevel optimization Problem \eqref{eq:bilevel optimization problem}.
\end{itemize}
\end{corollary}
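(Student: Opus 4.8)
The plan is to read this corollary off Corollary~\ref{thm:polyhedral_constraints_levelbounded_imply_nu_subdiff} by checking its remaining hypotheses, and then to promote the statement on weak accumulation points into genuine convergence to the solution set using that \(\X\) is finite-dimensional.

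First I would invoke the recession-direction criterion exactly as in the proof of Theorem~\ref{thm:recession_direction_nu_lsc}: since \(\X\) is finite-dimensional and \(f\) has no recession direction in common with the constraints of Definition~\ref{def:equivalent_def_recession_dir_constraints} (equivalently \(f^\infty(d)>0\) for every \(d\in\D\setminus\{0\}\)), \cite[Theorem 3.31]{rockafellar2009variational} gives that \(\psi:(x,s)\mapsto f(x)+\delta_\CC(x,s)\) is level bounded in \(x\) locally uniformly in \(s\). Next I would verify the other ingredients required by Corollary~\ref{thm:polyhedral_constraints_levelbounded_imply_nu_subdiff}: the mapping \(C\) and the sets \(\KK,\X\) are polyhedral by assumption; since \(\CC\) is polyhedral and \(\X\) is finite-dimensional, its projection \(\Scal=\dom(\nu)\) onto \(\Y\) is again polyhedral (by Fourier--Motzkin elimination of the finitely many coordinates of \(\X\)), hence closed, so \(\sBar\in\cl(\Scal)=\Scal\) and, choosing \(\overline x\) with \((\overline x,\sBar)\in\CC\), one gets \(\nu(\sBar)\le f(\overline x)<\infty\) because \(f\) is finite everywhere on \(\X\); finally, being a finite convex function on the finite-dimensional polyhedral set \(\X\), \(f\) is Lipschitz continuous on every bounded subset of \(\X\). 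With all hypotheses of Corollary~\ref{thm:polyhedral_constraints_levelbounded_imply_nu_subdiff} satisfied, this yields quantitative convergence to the closest feasible problem, boundedness of \((x^k)_{k\in\N}\), and that every weak accumulation point of \((x^k)_{k\in\N}\) lies in \(Sol_{\sBar}\).

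It then remains to upgrade this to convergence of \((x^k)_{k\in\N}\) to the solution set of~\eqref{eq:bilevel optimization problem}. In finite dimension the weak and norm topologies coincide, so \((x^k)_{k\in\N}\) is a bounded sequence all of whose accumulation points lie in \(Sol_{\sBar}\); a standard Bolzano--Weierstrass argument then gives \(\mathrm{dist}(x^k,Sol_{\sBar})\to 0\): otherwise a subsequence would stay at distance at least \(\varepsilon>0\) from \(Sol_{\sBar}\), and a convergent sub-subsequence of it would have a limit simultaneously in \(Sol_{\sBar}\) and at distance at least \(\varepsilon\) from it, a contradiction.

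The main obstacle is not a deep one, since this corollary is essentially the finite-dimensional specialization of Corollary~\ref{thm:polyhedral_constraints_levelbounded_imply_nu_subdiff}; the point that needs genuine care is the claim that \(f\), finite everywhere on the finite-dimensional polyhedral set \(\X\), is Lipschitz on bounded subsets of \(\X\). This is precisely where the finiteness of the dimension of \(\X\) is used, and it is what makes it unnecessary to impose Lipschitz continuity of \(f\) as a separate hypothesis, as was done in Corollary~\ref{thm:polyhedral_constraints_levelbounded_imply_nu_subdiff}.
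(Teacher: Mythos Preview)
Your proposal is correct and follows essentially the same approach as the paper: use \cite[Theorem 3.31]{rockafellar2009variational} to obtain level boundedness from the recession hypothesis, invoke local Lipschitz continuity of finite convex functions in finite dimension, and then apply Corollary~\ref{thm:polyhedral_constraints_levelbounded_imply_nu_subdiff}. You are in fact more careful than the paper on two points it glosses over: you explicitly verify \(\nu(\sBar)<\infty\) (via polyhedrality of \(\Scal\) and finiteness of \(f\)), and you spell out the passage from ``all weak accumulation points lie in \(Sol_{\sBar}\)'' to ``\(\mathrm{dist}(x^k,Sol_{\sBar})\to 0\)'' using finite-dimensionality.
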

\begin{proof}
The conditions on recession directions means that the function \( \psi : (x,s) \mapsto f(x) + \delta_\CC( x, s) \) is level bounded in \(x\) locally uniformly in \(s\) according to \cite[Theorem 3.31]{rockafellar2009variational}. Furthermore, in finite dimensions, a convex function is locally Lipschitz on its domain, so \(f\) is Lipschitz on every bounded sets and \Cref{thm:polyhedral_constraints_levelbounded_imply_nu_subdiff} gives us the result. 
\end{proof}
}

\section{Examples} \label{sec:example_infeasible_problem}
This section provides examples that illustrate the applicability, implications, and meaning of the results and their hypotheses.  

\begin{example}[Different formulations yield different convergence properties].
In this example, we illustrate the case of linear constraints to highlight how different formulations of the same optimization problem yield different ALM and convergence properties. Let \(A,B\) be matrices, \(a,b\) be vectors of appropriate dimensions, and let \(f\) be a closed proper convex function that is finite everywhere.
     Assuming \(a\) is in the range of \(A\), consider the three formulations of the same optimization problem with linear equality and inequality constraints in finite dimensions:
\medskip

\begin{minipage}[t]{0.25\textwidth}
    \begin{itemize}
        \item[(a)] \( \begin{array}{rl}
            \displaystyle \min_{x \in \R^n} & f(x)         \\[1mm]
            \text{s.t.}& Ax = a , \\
            & Bx \leq b.
        \end{array} \)
    \end{itemize}
\end{minipage}
\hfill
\begin{minipage}[t]{0.30\textwidth}
    \begin{itemize}
        \item[(b)] \( \begin{array}{rl}
            \displaystyle \min_{x \in \R^n, Ax = a } & f(x)         \\[1mm]
            \text{s.t.} & Bx \leq b. \\
            & 
        \end{array} \)
    \end{itemize}
\end{minipage}
\hfill
\begin{minipage}[t]{0.34\textwidth}
    \begin{itemize}
        \item[(c)] \( \begin{array}{rl}
            \displaystyle \min_{x \in \R^n} & f(x)         \\[1mm]
            \text{s.t.} &  x \in \{ \tilde x | A\tilde x = a \} , \\
            & Bx \leq b .
        \end{array} \)
    \end{itemize}
\end{minipage}
\vspace{5pt}
\newline 
We assume that \(f\) has no recession direction in common with the constraints.
In all three cases (a), (b), and (c), \Cref{thm:polyhedral_constraints_levelbounded_imply_nu_subdiff} applies and provides quantitative convergence to the closest feasible problem. Meaning that the three algorithms converge respectively to their closest feasible problem and in all three cases the iterates \(x^k\) converge to the solution set of the closest feasible problem. The closest feasible problem is however defined differently in each cases as follows:

\begin{itemize}
    \item[(a)] \( \begin{array}{rl}
        \displaystyle \min_{x \in \R^n} & f(x)         \\[1mm]
        \text{s.t.} & x \in \argmin\limits_{\tilde x \in \R^n}  \left\| A\tilde x- a  \right\|^2 + \left\| \lfloor B\tilde x -b \rfloor_+ \right\|^2  \end{array} \)
\end{itemize}

\begin{itemize}
    \item[(b)] \( \begin{array}{rl}
        \displaystyle \min_{x \in \R^n} & f(x)         \\[1mm]
        \text{s.t.} & x \in \argmin\limits_{\tilde x \in \R^n, A\tilde x = a} \left\| \lfloor B\tilde x -b \rfloor_+ \right\|^2
    \end{array} \)
\end{itemize}

\begin{itemize}
    \item[(c)] \( \begin{array}{rl}
        \displaystyle \min_{x \in \R^n } & f(x)         \\[1mm]
        \text{s.t.} & x \in \argmin\limits_{\tilde x \in \R^n}  \Vert x - \Proj_{\{ \tilde x | A\tilde x = a \}} \left( x  \right) \Vert^2 + \left\| \lfloor B\tilde x -b \rfloor_+ \right\|^2 .
    \end{array} \)
\end{itemize}
\end{example}

\begin{example}[Infeasible QCQP, subdifferentiability of \(\nu\) is not guaranteed]. \label{example:QCQP_example}
Let us consider the following convex quadratically constrained quadratic programming (QCQP) where \(\alpha , \beta\) are fixed parameters:
\begin{equation} \label{eq:QCQP_example}
\begin{array}{rl}
\displaystyle \inf_{x\in \R} & -x\\[1mm]
\text{s.t.} & x^2 + \beta \leq 0\\[1mm]
& x + \alpha \leq 0 .
\end{array}
\end{equation}

Perhaps \(\alpha, \beta\) are parameters learned by an algorithm (as in machine learning) or are simply estimated by empirical methods.
 It is clear, in this example, that the constraints are infeasible if \(\alpha, \beta\) are not within a specific set and we may want to know how the algorithm behaves in this case.
  Assume that we perform the augmented Lagrangian method with constant step size on this problem, meaning that \(\forall k \in \N , \; \gamma_k = \gamma\).
The (exact) augmented Lagrangian method for this problem consists of the following iteration:
\begin{equation*} 
    \begin{array}{rl}
        \displaystyle x^{k+1} = & \begin{multlined}[t] \argmin_{x \in \R} \bigg( -x + \lambda^k (x^2 + \beta) + \mu^k (x + \alpha) + \tfrac{1}{2\gamma}\left( \lfloor x^2 + \beta \rfloor_+ \right)^2  \\ + \tfrac{1}{2\gamma} \left( \lfloor x + \alpha \rfloor_+ \right)^2 \bigg) \end{multlined} \\
        \lambda^{k+1} = & \lfloor \lambda^k + \gamma \left((x^{k+1})^2 - (x^k)^2\right) \rfloor_+ \\
        \mu^{k+1} = & \lfloor \mu^k + \gamma (x^{k+1} - x^k) \rfloor_+
    \end{array}
\end{equation*}

Previous works on the infeasible augmented Lagrangian method are not easily applicable: \cite{birgin2015optimality} is not applicable since the step sizes do not diverge, 
\cite{chiche2016augmented} is not applicable since this problem is not a QP and \cite{dai2023augmented} needs the subdifferentiability of the value function which is not guaranteed in this example, as we show. 
The value function \(\nu\) associated to \eqref{eq:QCQP_example} is: 
\[
 \begin{array}{rl}
\displaystyle \nu(s_1, s_2) \triangleq \inf_{x\in \R} & -x\\[1mm]
\text{s.t.} & x^2 + \beta + s_1\leq 0\\[1mm]
& x + \alpha + s_2 \leq 0.
\end{array}
\]

In this simple example, it is possible to assess subdifferentiability by hand. 
This exercise illustrates that assessing subdifferentiability of \(\nu\) is not straightforward even in this simple setting, and that nondifferentiability is not a rare phenomenon.

\begin{figure}[htbp]
    \centering
    \includegraphics[width=0.95\textwidth]{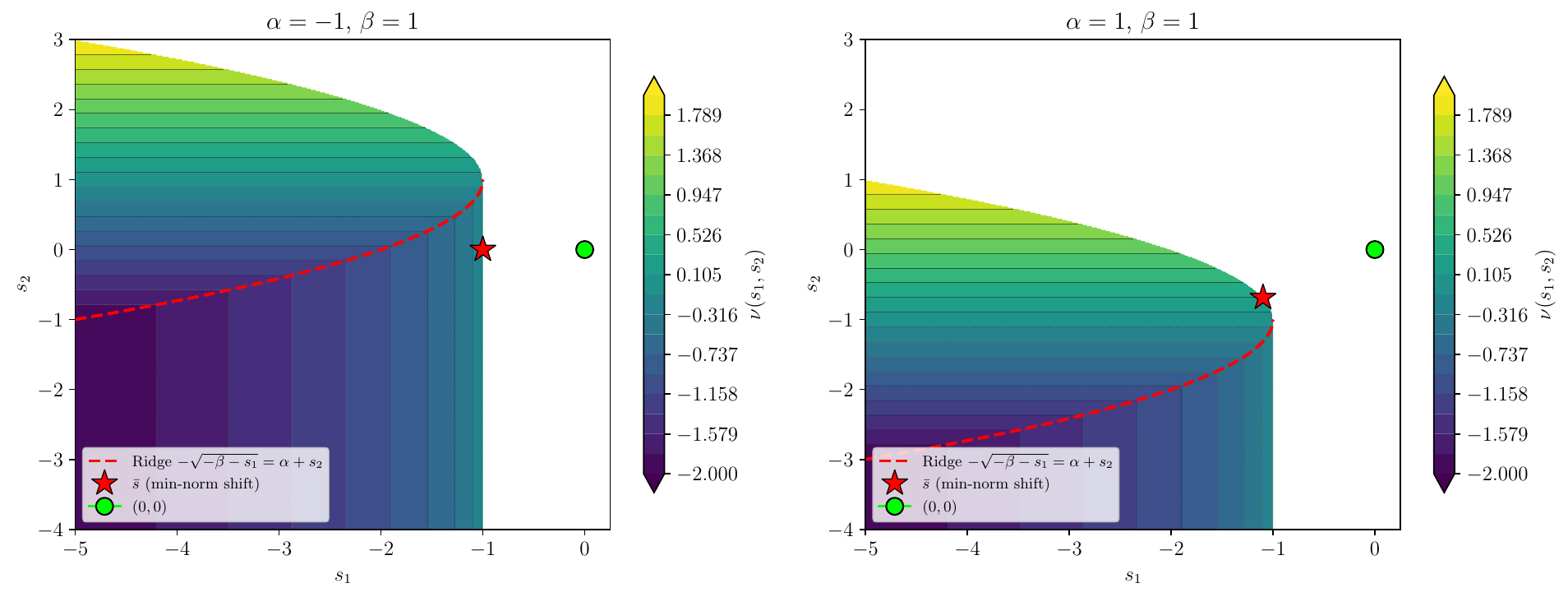}
    \caption{Contour plot of the value function $\nu(s_1, s_2)$ for the QCQP example. $\nu$ is nondifferentiable at $\sBar$ when 
    $(\alpha, \beta) = (- 1, 1)$ and subdifferentiable at $\sBar$ when $(\alpha, \beta) = (1, 1)$.}
    \label{fig:QCQP_nu_contour}
\end{figure}

The set of feasible shifts is \( \Scal = \{ (s_1, s_2) \; | \; s_1 \leq - \beta , s_2 \leq \sqrt{-\beta - s_1} - \alpha \} \) on which the value function is \( \nu( s_1,s_2) = \max \{ -\sqrt{-\beta-s_1} , \alpha + s_2 \} \).
If \(\alpha < 0 \) \added{and \(\beta = 1\)}, the minimal norm shift is \(\sBar = (-1, 0)\) and \(\nu\) is nondifferentiable at that point since in a neighborhood of that point we have \(\nu( s_1,s_2 ) =  -\sqrt{-1-s_1} \). 
If \(\alpha > 0 \) \added{and \(\beta = 1\)}, in a neighborhood of \( \sBar\) (its expression is unwieldy) we have that \(\nu(s) = \alpha + s_2\) is subdifferentiable. 

\Cref{thm:level boundedness locally uniformly implies lsc} in the present work guarantees that the augmented Lagrangian converges to the solution set of the shifted problem, since its condition on recession directions is satisfied (indeed, there are no recession directions for the first constraint alone). In particular, if \((x^k)_{k\in\N}\) is a sequence generated by the inexact augmented Lagrangian method (\Cref{algo:IALM}), then the squared norm of the constraint violation \(  \left(\lfloor (x^k)^2 + \beta \rfloor_+ \right)^2 + \left(\lfloor x^k + \alpha \rfloor_+ \right)^2 \) is minimized at rate \( O\left(  \frac{1}{k} \right) \), and the objective values \(-x^k\) converge to the value of the closest feasible problem:
\begin{equation*} 
    \begin{array}{rl}
        \displaystyle \nu(\sBar) \triangleq \min_{x\in \X} & -x \\
        \text{s.t.} & x \in \argmin_{x' \in  \mathrm{dom}(f) } \left(\lfloor (x')^2 + \beta \rfloor_+ \right)^2 + \left(\lfloor x' + \alpha \rfloor_+ \right)^2 .
    \end{array}
\end{equation*}
In the case where subdifferentiability can be guaranteed, \cite{dai2023augmented} shows the existence of a subsequence of the augmented Lagrangian iterates that minimises some KKT conditions. 
\Cref{thm: nu subdifferentiable implies convergence} in the present work guarantees \( |-x^k - \nu(\sBar) | = O\left(  \frac{1}{\sqrt{k}} \right) \). 
\end{example}

\begin{example}[Second-order elliptic PDE] \label{example:second order elliptic PDE}.
This example illustrates an infinite-dimensional setting. Let \(\Omega\) be a closed, convex, bounded subset of a Hilbert space. Let \(L >0\) be a fixed positive real number.
Consider the second-order elliptic partial differential equation that consists of finding \(u \in \mathcal{L}(\Omega, L)\) (the set of \(L\)-Lipschitz functions on \(\Omega\)) that verifies
\begin{equation} \label{eq:second order elliptic PDE}
\left\{
\begin{aligned}
- \text{div} \left( \mathbf{A}(x) \nabla u(x) \right) + c(x) u(x) &= a(x) \quad \text{in } \Omega, \\
u(x) &= b(x) \quad \text{on } \mathrm{bdry}(\Omega),
\end{aligned}
\right.
\end{equation}
where \(\mathrm{bdry}(\Omega)\) is the boundary of \(\Omega\),  \(\mathbf{A}(x) \in C^1(\overline{\Omega})\) is uniformly elliptic and \(c(x) \in L^2(\Omega)\) is nonnegative over \(\Omega\), 

The variational formulation of \eqref{eq:second order elliptic PDE} is
\begin{equation} \label{eq:second order elliptic PDE Variational Version}
    \begin{array}{rl}
        \displaystyle \inf_{ v \in \mathcal{L}(\Omega, L) } & f(v)         \\[1mm]
        \text{s.t.}                    & v = b \text{ almost everywhere on } \mathrm{bdry}(\Omega),
    \end{array}
\end{equation}
where
\[
f(v) = \frac{1}{2} \int_{\Omega} \left[ \mathbf{A}(x) \nabla v(x) \cdot \nabla v(x) + c(x) v^2(x) - 2a(x)v(x) \right] dx.
\]

The IALM for this problem consists of the iterations:

\begin{align*}
    v^{k+1} & = \argmin_{ v \in \mathcal{L}(\Omega, L)} f(v) - \int_{\omega} \lambda^k(x) \left( v(x) - b(x) \right) dx + \tfrac{\gamma_k}{2}  \int_{\omega} \left( v(x) - b(x) \right)^2 dx \\
    \lambda^{k+1} &= \lambda^{k} - \gamma_k ( v- b ).
\end{align*}

\(f\) is level bounded due to the quadratic term and is therefore also level bounded with respect to constraint perturbations.
 \Cref{thm:level boundedness locally uniformly implies lsc} applies and we can conclude the algorithm converges semi-quantitatively to the solution set of the closest feasible problem
\begin{equation*}
        \begin{array}{rl}
            \displaystyle \inf_{ v \in \mathcal{L}(\Omega, L) } & f(v)         \\[1mm]
            \text{s.t.}                    & v \in \argmin_{w \in \mathcal{L}(\Omega, L)} \int_{\mathrm{bdry}(\Omega)} \left( w(x) - b(x) \right)^2 dx .
        \end{array}
\end{equation*}

This means that if it is impossible for an \(L\)-Lipschitz function to satisfy the boundary condition,
 then IALM will converge to the solution that minimizes the violation of the boundary conditions. 

\end{example}

\section{Conclusion}
\changed{
This work provides a comprehensive analysis of the inexact augmented Lagrangian method (IALM) applied to convex optimization problems that may lack feasible solutions. 
We also provide new results for the inexact proximal point algorithm (IPPA) applied to convex functions potentially lacking minimizers, which is of independent interest.}

\changed{For IPPA, we show that the subgradient-related terms converge to the minimal-norm subgradient and that the conjugate values \(h^*(s^k)\) converge to \(h^*(\sBar)\).
Finiteness or subdifferentiability of the dual function ensure convergence rates for these quantities.
}

\changed{
The function values and the constraint values robustly converge in IALM. 
Their limit corresponds to the value and constraint of the closest feasible problem under established sufficient conditions such as the absence of common recession directions of the objective and constraints, level boundedness of the objective with respect to constraint perturbations, or Lipschitz continuity of the objective together with polyhedral constraints. 
Some of these hypotheses also guarantee convergence rates for the function values and the constraint values.
We illustrated how these results can be applied in practical settings through examples.}

\changed{The presented results hold under standard assumptions on inexactness and step sizes, are applicable in infinite-dimensional Hilbert spaces, and offer a significantly clearer understanding of the ALM's behavior in degenerate or possibly infeasible scenarios.
This work thereby extends the reliability and applicability of augmented Lagrangian methods, providing stronger theoretical guarantees for their use in a wider array of practical optimization problems where feasibility or constraint qualification is not a given.
}


\section{Declarations}

\subsection{Funding}
R.A. is supported by the Agence d’Innovation Défense (AID).  
This work was also funded in part by the European Union (ERC grant CASPER, 101162889) and by the French government under the management of Agence Nationale de la Recherche (ANR) as part of the “Investissements d’avenir’’ and “France 2030’’ programmes (ANR-23-IACL-0008 “19-P3IA-0001, PR[AI]RIE-PSAI’’ 3IA Institute and ANR-22-CE33-0007, INEXACT).

\subsection{Competing interests}
The authors have no competing interests to declare.


%
%

\bibliographystyle{plain}
\bibliography{references}   

\newpage

\appendix

\crefname{section}{appendix}{appendices}
\Crefname{section}{Appendix}{Appendices}



\section{The augmented Lagrangian method} \label{sec:ALM}
In this section, we provide a brief overview of the augmented Lagrangian method, following its historical development. We explain how the ALM was first established for equality-constrained convex optimization problems as a modification of the penalty method and then extended to the much broader class of convex optimization problems in the form \eqref{eq:convex_min_problem}.

\paragraph{The ALM for equality-constrained convex optimization problems.}
The augmented Lagrangian method was introduced as the "method of multipliers" in \cite{hestenes1969multiplier} and independently in \cite{powell1969method} for equality-constrained optimization problems. It was presented as an improvement on the penalty method.

The convex optimization problem with equality constraints is:
\begin{equation} \label{eq:equality_constrained_problem}
    \begin{array}{rl}
        \displaystyle \min_{x \in \X} & f(x)         \\[1mm]
        \text{s.t.}                    & c_i(x) = 0, \quad i = 1,\dots,m
    \end{array}
\end{equation}
where the \(c_i: \X \to \R\) are affine functions and \(f\) is convex, proper, and closed.
It is straightforward that \eqref{eq:equality_constrained_problem} is a special case of the formalism in this work \eqref{eq:convex_min_problem} when \(\KK = \{0\}^m\), and \( C: x \mapsto \left(c_1(x), \dots, c_m(x)\right) \).
 
The penalty method consists of solving the following sequence of problems:
\begin{equation} \label{eq:penalty_method}
    x^{k+1} = \argmin_{x \in \X} \left\{ f(x) + \frac{\gamma_k}{2} \sum_{i=1}^m  c_i(x)^2 \right\} = \argmin_{x \in \X} \left\{ f(x) + \frac{\gamma_k}{2} \|C(x)\|^2 \right\}
\end{equation}
where the positive penalty parameters \((\gamma_k)\) satisfy \(\gamma_k \to \infty\). As \(\gamma_k\) diverges, the constraints are satisfied asymptotically by the iterates \(x^k\), but the subproblem \eqref{eq:penalty_method} becomes increasingly ill-conditioned.

The method of multipliers modifies the penalty method by incorporating a linear term characterized by a vector of multipliers \(\lambda^k \in \R^m\), which is updated at each iteration:
\begin{equation} \label{eq:method_of_multipliers_equality_constraints}
    \begin{array}{rl}
        x^{k+1} &\in \argmin_{x \in \X}  f(x) + \sum_{i=1}^m \left[ -\lambda_i^k c_i(x) + \frac{\gamma_k}{2} c_i(x)^2 \right] \\
        \lambda_i^{k+1} &= \lambda_i^k - \gamma_k c_i(x^{k+1})  \quad  \text{for} \; i = 1, \dots , m
    \end{array}
\end{equation}
or equivalently in vectorized form:
\begin{equation} \label{eq:method_of_multipliers_equality_constraints_vectorized}
    \begin{array}{rl}
        x^{k+1}  &\in \argmin_{x \in \X}  f(x)  -\inner{\lambda^k}{C(x)} + \frac{\gamma_k}{2} \left\| C(x) \right\|^2 \\
        \lambda^{k+1} &= \lambda^k - \gamma_k C(x^{k+1}) 
    \end{array}
\end{equation}
Here, the dual variables \(\lambda^k\) are associated with the standard Lagrangian \(L_0(x,\lambda) = f(x) + \inner{\lambda}{C(x)}\). The penalty parameters \(\gamma_k > 0\) no longer need to diverge to infinity (they can be constant or updated dynamically), and the subproblems solved at each iteration generally exhibit better conditioning compared to the pure penalty method.

Subsequently, the method of multipliers was gradually renamed the augmented Lagrangian method in the optimization literature, since the quantity minimized at each iteration is the standard Lagrangian augmented with a quadratic penalty term.

\paragraph{The ALM for general convex optimization problems.}

More generally, \eqref{eq:convex_min_problem} encompasses a much broader class of constraints than the equality constraints in \eqref{eq:equality_constrained_problem}. Nevertheless, the augmented Lagrangian method can be derived naturally in this general setting.
We first rewrite Problem \eqref{eq:convex_min_problem} as an equivalent equality-constrained problem by introducing a slack variable \(s\):
\begin{equation} \label{eq:convex_min_problem_rewritten2}
    \begin{array}{rl}
        \displaystyle \inf_{x \in \X, s \in \Y} & f(x) + \delta_{\CC}(x,s)        \\[1mm]
        \text{s.t.}                    & s = 0 .
    \end{array}
\end{equation}
Here, the constraint \((x,s) \in \CC\) encodes the original problem structure: \(s = C(x) - y\) for some \(y \in \KK\). The objective uses the indicator function \(\delta_{\CC}\) of the set \(\CC\).
The augmented Lagrangian for this problem with penalty parameter \(\gamma > 0\) (associated with the constraint \(s=0\)) is
\begin{equation} \label{def:augmented lagrangian function}
    L_\gamma(x,s,\lambda) = f(x) + \delta_{ \CC}(x,s) - \inner{\lambda}{s}  + \tfrac{\gamma}{2} \left\Vert  s \right\Vert^2 \quad .
\end{equation}
The introduction of a slack variable to recast the problem into the setting of equality-constrained optimization is a common technique for deriving the ALM in more general settings; indeed, the ALM for problems with inequality constraints was derived using this approach in \cite{rockafellar1973dual}. 

Applying the method of multipliers \eqref{eq:method_of_multipliers_equality_constraints_vectorized} to the reformulated Problem \eqref{eq:convex_min_problem_rewritten2} (with objective \(f(x) + \delta_{\CC}(x,s)\) and constraint \(s=0\)) yields the iteration:
\begin{equation} \label{eq:augmented_lagrangian_method_general_problem}
    \begin{array}{rl}
        (x^{k+1} , s^{k+1} )  &\in \argmin_{(x,s) \in \CC}  f(x) -\inner{\lambda^k}{s} + \frac{\gamma_k}{2} \left\| s \right\|^2 \\
        \lambda^{k+1} &= \lambda^k - \gamma_k s^{k+1} .
    \end{array}
\end{equation}
Note that the minimization is over \((x,s) \in \CC\), which implicitly contains the \(\delta_{\CC}(x,s)\) term from \eqref{def:augmented lagrangian function}.

Through the change of variable \( y = C(x) - s \), noting that \((x,s) \in \CC\) is equivalent to \(x \in \X, y \in \KK\), we can rewrite \eqref{eq:augmented_lagrangian_method_general_problem} as:
\begin{equation} \label{eq:augmented_lagrangian_method_general_problem_xy}
    \begin{array}{rl}
        (x^{k+1} , y^{k+1} )  &\in \argmin_{x \in \X, y \in \KK}  f(x)  -\inner{\lambda^k}{C(x) - y} + \frac{\gamma_k}{2} \left\|C(x) - y \right\|^2 \\
        \lambda^{k+1} &= \lambda^k - \gamma_k  \left( C(x^{k+1}) - y^{k+1} \right) .
    \end{array}
\end{equation}

In the first line, the minimization with respect to \(y\) for a fixed \(x\) involves a quadratic function constrained to the closed convex set \(\KK\). The optimal \(y\) can be expressed analytically using the projection operator onto \(\KK\): \( y^*(x) = \Proj_{\KK}(C(x) - \lambda^k/\gamma_k) \). Substituting this back into the objective function and performing algebraic manipulations (while ignoring terms constant with respect to \(x\)), we can eliminate \(y\) and rewrite the update for \(x\) as:
\begin{equation} \label{eq:augmented_lagrangian_method_general_problem_x}
    \begin{array}{rl}
        x^{k+1}  &\in \argmin_{x \in \X}  f(x) + \frac{\gamma_k}{2} \left\|C(x) -\tfrac{\lambda^k}{\gamma_k} - \Proj_{\KK}\left(C(x) - \tfrac{\lambda^k}{\gamma_k}\right) \right\|^2 \\
        \lambda^{k+1} &= - \gamma_k  \left( C(x^{k+1}) - \tfrac{\lambda^k}{\gamma_k} - \Proj_{\KK}\left(C(x^{k+1}) - \tfrac{\lambda^k}{\gamma_k} \right) \right) .
    \end{array}
\end{equation}

If \(\KK\) is a closed convex cone, this formulation can be further simplified using the identity \(z - \Proj_\KK(z) = \Proj_{\KK^\circ}(z)\), where \(\KK^\circ\) is the polar cone of \(\KK\):
\begin{equation}     \label{eq:augmented_lagrangian_method_general_problem_x_polar_cone}
    \begin{array}{rl}
        x^{k+1}  &\in \argmin_{x \in \X}  f(x) + \frac{\gamma_k}{2} \left\| \Proj_{\KK^\circ}\left(C(x) - \tfrac{\lambda^k}{\gamma_k}\right) \right\|^2 \\
        \lambda^{k+1} &= - \gamma_k \Proj_{\KK^\circ} \left( C(x^{k+1}) - \tfrac{\lambda^k}{\gamma_k} \right) .
    \end{array}
\end{equation}
Notice in particular that we recover the augmented Lagrangian method for inequality-constrained optimization problems \eqref{eq:augmented_lagrangian_method_inequality_constraints} since the polar cone of \(\KK = \R_-^{m}\) is \(\KK^\circ = \R_+^{m}\).


In this work, we primarily use \eqref{eq:augmented_lagrangian_method_general_problem} involving \((x,s)\) for the augmented Lagrangian method, as it allows for simpler proofs relating IALM to the dual proximal point method. We study the more general inexact augmented Lagrangian method (IALM), which accounts for errors in solving the subproblems at each iteration, formally defined in \Cref{algo:IALM}.

\section{If the value function is not proper} \label{sec:case_where_the_shifted_problem_is_not_proper}

We briefly discuss the case where the value function \eqref{eq:convex_min_problem_shifted} is not proper, meaning that the function \( \nu: s \mapsto \inf_{x \in \X} f(x) + \delta_{(x,s)\in\CC}\) is not proper.
 It is clear that \(\nu\) cannot be identically \(+\infty\); indeed, taking any \(x \in \X\) such that \(f(x) < +\infty\) and any \(y\in \KK\) we have that \(\nu(C(x) - y)  \leq f(x) + \delta_{(x,C(x) -y)\in\CC} = f(x) < +\infty\).
 \changed{Therefore, \(\nu\) being improper would imply that there exists a shift \(\tilde s\) such that \(\nu(\tilde s) = -\infty\). We now show that in this case the first iterate of IALM instantly converges to \(-\infty\) in value.
 By definition of \(\nu(\tilde s) = -\infty\),} there exists a sequence \( (\tilde x^k)_{k\in\N} \subset \X \) such that  \( \forall k \in \N , \; ( \tilde x^k , \tilde s ) \in \CC \) and \( \lim_{k\to\infty} f(\tilde x^k) = -\infty \).
 Then for any \(\lambda^0 \in \Y\) we have that \( L_{\gamma_0}( \tilde x^k , \tilde s, \lambda^0) = f(\tilde x^k) + \delta_{(\tilde x^k, \tilde s)\in\CC} - \inner{\lambda^0}{\tilde s} + \tfrac{\gamma_0}{2} \left\| \tilde s \right\|^2 = f(\tilde x^k) - \inner{\lambda^0}{\tilde s} + \tfrac{\gamma_0}{2} \left\| \tilde s \right\|^2 \to - \infty\).
  This means that in the first step of IALM, the subproblem solved (consisting of minimizing \(L_{\gamma_0}(x,s,\lambda^0)\) over \((x,s) \in \CC\)) has for value \(-\infty\).
   In this case the algorithm converges in a single iteration in value to \(-\infty\).

\end{document}